\begin{document}

\title[General harmonic measures]{General harmonic measures for\\ distance-expanding dynamical systems}
\author{Zhiqiang~Li \and Ruicen~Qiu}

\thanks{Z.~Li and R.~Qiu were partially supported by NSFC Nos.~12101017, 12090010, 12090015, and BJNSF No.~1214021.}

\address{Zhiqiang~Li, School of Mathematical Sciences \& Beijing International Center for Mathematical Research, Peking University, Beijing 100871, CHINA}
\email{zli@math.pku.edu.cn}
\address{Ruicen~Qiu, School of Mathematical Sciences, Peking University, Beijing 100871, CHINA}
\email{ruicen121@stu.pku.edu.cn}

\subjclass[2020]{Primary: 37D20; Secondary: 31C35, 28A78, 05C81, 31E05. }

\keywords{Sullivan's dictionary, distance-expanding map, harmonic measure, Martin boundary, Gromov boundary}

\begin{abstract}
	Partially motivated by the study of I.~Binder, N.~Makarov, and S.~Smirnov \cite{BMS03} on dimension spectra of polynomial Cantor sets, we initiate the investigation on some general harmonic measures, inspired by Sullivan's dictionary, for distance-expanding dynamical systems. Let $f\:X\to X$ be an open distance-expanding map on a compact metric space $(X,\rho)$. A Gromov hyperbolic tile graph $\Gamma$ associated to the dynamical system $(X,f)$ is constructed following the ideas from M.~Bonk, D.~Meyer \cite{BM17} and P.~Haïssinsky, K.~M.~Pilgrim \cite{HP09}. We consider a class of one-sided random walks associated with $(X,f)$ on $\Gamma$. They induce a Martin boundary of the tile graph, which may be different from the hyperbolic boundary. We show that the Martin boundary of such a random walk admits a surjection to $X$. We provide a class of examples to show that the surjection may not be a homeomorphism. Such random walks also induce measures on $X$ called harmonic measures. When $\rho$ is a visual metric, we establish an equality between the fractal dimension of the harmonic measure and the asymptotic quantities of the random walk.
\end{abstract}

\maketitle

\tableofcontents

\section{Introduction} \label{sct:Introduction}
	The use of measure-theoretic methods is standard in the study of dynamical systems nowadays. Understanding various properties of the invariant measures associated with the dynamical systems has been a central part of the ergodic theory.

	During the 20th century, complex dynamics and geometric group theory flourished with the collaborative efforts of numerous mathematicians. In the 1970s, D.~P.~Sullivan identified striking parallels between the theory of the action of Kleinian groups and the iterations of rational maps on the Riemann sphere. Notably, the analogy between expanding rational maps and hyperbolic Kleinian group actions, including fractal properties, is involved in Sullivan's dictionary. As topological generalizations of these concepts, expanding dynamics and Gromov hyperbolic groups also share similar properties. A hyperbolic graph associated with a dynamical system is constructed, whose Gromov boundary is identified with the phase space of the dynamical system, as an analog of the Cayley graph for a hyperbolic group. The construction of the hyperbolic graph for Thurston maps is due to M.~Bonk and D.~Meyer in \cite{BM17}, where they call it the tile graph. Independently, P.~Haïssinsky and K.~M.~Pilgrim also developed a similar graph in the context of coarse expanding conformal dynamics in \cite{HP09}. We investigate different boundaries of similar graphs associated with distance-expanding dynamical systems.

	In the 1960s, H.~Kasten studied the spectrum of random walks on countable groups and found that the property called \emph{amenability} can be determined by the random walk, which reflects features of the group itself. From then on, random walk methods gradually became powerful tools in characterizing various groups. One of the key observations is that the Markov operator is a discrete analog of the Laplacian operator. Hence, many concepts, including the harmonic functions, the Poisson boundary, and the Harnack principle, can be studied in the theory of random walks on countable groups. Mathematicians, including H.~Furstenberg, E.~B.~Dynkin, F.~Ledrappier, V.~A.~Kaimanovich, etc., have studied various properties of these potential-theoretic objects. For hyperbolic groups, the problem of finding the Poisson boundary explicitly is easier. There is another topological boundary associated with the random walk, which is called the Martin boundary. The Poisson boundary can be identified with the Martin boundary equipped with the representing measure of the constant function, that is, the harmonic measure on it; see \cite{Ka96} for details. A.~Ancona proved Ancona's inequality \cite[Theorem~5]{Anc87}, which can be used to show that, under some mild conditions, the Martin boundary of the random walk is isomorphic to the hyperbolic boundary of the group. Hence, for hyperbolic groups, the study of the Poisson boundaries is reduced to the study of the harmonic measures on the hyperbolic boundary. Several asymptotic constants, including the asymptotic entropy $h$ and the asymptotic drift $l$, are crucial in understanding the Poisson boundary of random walks on hyperbolic groups. One of the significant results is the so-called fundamental inequality $h\le lv$ proved by Y.~Guivarc'h \cite{Gu80} about the entropy, the drift of the random walk, and the logarithmic growth rate $v$ of the group.  In the 2000s, S.~Blach\`{e}re, P.~Haïssinsky, and P.~Mathieu in \cite{BHM08} and \cite{BHM11} introduced Green metrics for random walks on hyperbolic groups, which are quasi-isometric to the word metric. Using Green metrics, they showed that the harmonic measure is equivalent to the conformal measure of the group if and only if the equality in the fundamental inequality holds. They also provided a formula $\dims \nu=\frac{h}{al}$ to calculate the dimension of harmonic measures on the hyperbolic boundary endowed with an $a$-visual metric.

	In the theory of complex analysis and polynomial dynamics, there is also a notion of harmonic measure on the Julia set $J_f$ of a polynomial $f$ with degree $d\ge 2$. It is defined, for example, as the hitting distribution of a Brownian motion in the complement $\Omega_f=\C\setminus K_f$ of the filled Julia set. It was first introduced by H.~Brolin in \cite{Br65} where the equidistribution of preimages of the harmonic measure was proved. M.~Yu.~Lyubich proved the equidistribution of preimages of the measure of maximal entropy in \cite{Ly83}, which shows that the harmonic measure and the measure of maximal entropy coincide. This measure is also called the Brolin--Lyubich measure. The Hausdorff dimension of the Brolin--Lyubich measure is proved to be equal to $1$ by A.~Manning in \cite{Ma84}. For a polynomial with disconnected Julia set, the tile graph $\Gamma$ is a tree so that the harmonic measure on $\Omega_F$ can be reconstructed as the harmonic measure of a random walk on $\Gamma$; see \cite{Em18}. For a deep investigation of the universal dimension spectra of polynomial Canter sets, see the work of I.~Binder, N.~Makarov, and S.~Smirnov \cite{BMS03}.

	This article mainly concentrates on discrete random walks on the hyperbolic graphs associated with expanding dynamical systems, aiming to define a class of harmonic measures for dynamical systems as an analog to the harmonic measures for infinite groups. Moreover, some basic properties, including a dimension formula for the newly defined harmonic measure, are proved in this article. We will also show that the Martin boundary maps surjectively to the phase space and the surjection may possibly not be a homeomorphism.

	More precisely, let $(X,\rho)$ be a compact metric space, and $f\:X\to X$ be an open transitive distance-expanding map on $X$; see Subsection~\ref{subsect:expandingDyn}. Associated with a Markov partition $\alpha=\{A_0,\dots, A_N\}$, we can define a hyperbolic graph $\Gamma$ called the \textit{tile graph}. In general, the vertex set of the tile graph consists of words $u=u_1u_2\dots u_n$ with characters $u_1,\dots,u_n\in\{0,\dots, N\}$, such that for each $i\in\{0,\dots, n-1\}$, $A_{u_{i+1}}\subseteq fA_{u_i}$. Each vertex $u=u_1u_2\dots u_n$ corresponds to a tile
	\begin{equation*}
		A_u \= A_{u_1}\cap f^{-1} A_{u_2}\cap\cdots\cap f^{-(n-1)}A_{u_n}.
	\end{equation*}
	Two vertices $u$, $v$ are connected by an edge if and only if their levels differ at most by $1$ and $A_u\cap A_v\neq\emptyset$. The empty word $o\=\emptyset$ corresponds to the largest tile $X$. 

	For a detailed construction of the tile graph, see Subsection~\ref{subsct:visualMetric}.

		The assumption on the uniform expansion of $f$ in Subsection~\ref{subsect:expandingDyn} implies that $f$ is a local homeomorphism. This property is usually used in this paper in the form of Lemma~\ref{lem:locIsom}, which indicates that the natural shift map $\sigma$ on the tile graph restricts to an isomorphism between subgraphs away from the root. 

	Let $\cM(\Gamma)$ be the space of probability measures on $\Gamma$. Given a map $P\:\Gamma\to \cM(\Gamma)$, the random walk $\{Z_n\}$ associated to $P$ with starting point $u$ is defined as follows. Put $Z_0=u$ and let $Z_{n+1}$ follow the law of distribution $P(Z_n)$ for all $n\in\Z_{\ge0}$, inductively. Then $\{Z_n\}$ is a random walk on the tile graph $\Gamma$. Such a map $P$ is called a transition probability on $\Gamma$. Since $\Gamma$ is countable, $P$ induces a map $\hP\colon \Gamma\times \Gamma\to [0,1]$ given by $(x,y)\mapsto P(x)(\{y\})$.

	In this article, we focus on a particular family of random walks on the tile graph $\Gamma$, which is related to the dynamics $(X,f)$. 

	We say that $P$ satisfies the \textit{Assumptions in Section~\ref{sct:Introduction}} if $P$ satisfies the following assumptions.

	\medskip
	\textbf{Assumptions:}
	\begin{enumerate}[\hspace{2em}(A)]
		\smallskip
		\item There is a constant $R>0$ such that for each $x \in \Gamma$, we have $\supp P(x)\subseteq B(x,R)$, where $B(x,R)$ denotes the ball in $\Gamma$ centered at $x$ with radius $R$. 
		\smallskip
		\item For all $x,y\in\Gamma$, if $\hP(x,y)>0$, then $\abs{y}>\abs{x}$. 
		\smallskip
		\item For all $x,y\in\Gamma$, if $\abs{y}=\abs{x}+1$ and $d(x,y)=1$, then $\hP(x,y)>0$. 
		\smallskip
		\item $P$ commutes with the shift $\sigma$, i.e.,
		\begin{equation*}
			P(\sigma u) = (\sigma_* P(u))\: A\mapsto \sum_{v\in A}\sum_{\sigma w=v}\hP(u,w)
		\end{equation*}
		except for $u=o$. 
	\end{enumerate}
	
	For the definition of the shift map $\sigma$, see Subsection~\ref{subsct:visualMetric}.

	Generally speaking, Assumption~(A) is a ``locality'' assumption for the transition probability. It ensures that each step of the random walk does not move too far away from its current position. This assumption is crucial in determining the Martin boundary of the tile graph just as determining the Martin boundary of hyperbolic groups in \cite{Anc87}. Assumption~(B) makes sure that the random walk always increases the level of a vertex. We make this assumption because by this we can establish the subadditivity of the logarithm of the Green function. Hence, the Green drift $l_G$ in Lemma~\ref{lem:deflG} is well-defined. Assumption~(C) implies that the random walk resembles a diffusion process. Without this assumption, the topology of the phase space would not affect the random walk, so the identification between the Martin boundary and the phase space becomes impossible. Assumption~(D) relates the random walk with the dynamical system $f$ so that it is possible to attach some ergodic properties to it. It is essential in the proof of Theorem~\ref{thm:mixing}.

	In the theory of random walks on hyperbolic graphs, under some mild assumptions of irreducibility and locality, the Martin boundary coincides with the hyperbolic boundary, see \cite[Section~27]{Woe00} and \cite[Theorem~3.1]{Kai97}. However, this result cannot be applied directly to our setting. Although the tile graph is hyperbolic, Assumption~(B) makes the random walk reducible. Hence, the Harnack inequality about the harmonic functions fails, and thus, Ancona's inequality fails as well. To deal with such an obstacle, we introduce and establish variants of the Harnack inequality and Ancona's inequality in Lemmas~\ref{lem:Harnack} and~\ref{lem:multiplicative}, respectively.

	The following theorems are the main results of this article. The first theorem relates points on the Martin boundary with points on the Gromov boundary.
	
	\begin{theorem} \label{thm:main1}
		Suppose that $f\:X\to X$ is an open transitive distance-expanding map on a compact metric space $(X,\rho)$, $\alpha$ is a sufficiently fine Markov partition, and $(\Gamma, P)$ is random walk on the tile graph $\Gamma=\Gamma(f, \alpha)$ with $P$ satisfying the Assumptions in Section~\ref{sct:Introduction}. Then there is a natural surjection $\Phi$ from the Martin boundary $\Mb\Gamma$ of $(\Gamma, P)$ to $X$. 
	\end{theorem}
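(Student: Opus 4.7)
The plan is to define $\Phi$ geometrically and then verify that it descends to the Martin compactification. For each vertex $y \in \Gamma$, the tile $A_y \subseteq X$ has diameter bounded by $C \theta^{\abs{y}}$ for constants $C > 0$ and $\theta \in (0,1)$, by the distance-expanding property of $f$ together with the sufficient fineness of $\alpha$. Consequently, if $\{y_n\} \subseteq \Gamma$ is any sequence with $\abs{y_n} \to \infty$, then any choice of $p_n \in A_{y_n}$ is precompact in $X$ and all subsequential limits agree. I would define $\Phi(\xi)$ to be this common limit along any representing sequence $y_n \to \xi$; note that, since $\Gamma$ is locally finite and graph distance from $o$ to $y$ equals $\abs{y}$, Martin convergence forces $\abs{y_n} \to \infty$.

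The heart of the argument is well-definedness: different representing sequences for the same $\xi \in \Mb\Gamma$ must yield the same point of $X$. Suppose for contradiction that sequences $\{y_n\}$ and $\{y'_n\}$ both converge to $\xi$ in the Martin topology while $A_{y_n}$ accumulates at $p$ and $A_{y'_n}$ at $q$, with $p \neq q$. Using the fineness of the Markov partition, pick a vertex $x \in \Gamma$ with $p \in A_x$ and $q \notin A_x$. By the multiplicative Ancona inequality of Lemma~\ref{lem:multiplicative}, the Martin kernel $K(x, y) = G(x, y)/G(o, y)$ is bounded below by a positive constant $c(x) > 0$ whenever any coarse geodesic from $o$ to $y$ passes through a uniform neighborhood of $x$, and decays quantitatively when $y$ lies far from the descendants of $x$. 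For large $n$ the nesting of tiles yields $A_{y_n} \subseteq A_x$ and $A_{y'_n} \cap A_x = \emptyset$, so the two tails of $K(x, \cdot\,)$ have incompatible limits, contradicting Martin convergence to a common $\xi$. A parallel comparison, applied with $p$ replaced by accumulation points of $\Phi(\xi_j)$ for a sequence $\xi_j \to \xi$, yields continuity of $\Phi$.

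Surjectivity is a compactness argument. Given $p \in X$, choose for each $n$ a vertex $y_n \in \Gamma$ of level $n$ with $p \in A_{y_n}$; this is possible because the $n$th refinement of the Markov partition covers $X$, and one may even arrange $A_{y_1} \supseteq A_{y_2} \supseteq \cdots$ by picking $y_{n+1}$ to be a child of $y_n$ in the tile graph. Since $\abs{y_n} \to \infty$ and the Martin compactification $\Gamma \cup \Mb\Gamma$ is compact, some subsequence converges to a point $\xi \in \Mb\Gamma$, and by construction $\Phi(\xi) = p$.

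The main obstacle is precisely the well-definedness step: one must translate the probabilistic information carried by the Martin kernel into the combinatorial-geometric information of which tile a vertex lies inside. The loss of irreducibility forced by Assumption~(B) defeats the usual Ancona machinery on hyperbolic graphs, and the tailored variant in Lemma~\ref{lem:multiplicative}, together with the modified Harnack estimate of Lemma~\ref{lem:Harnack}, is precisely the tool built to bridge this gap.
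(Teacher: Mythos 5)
Your overall architecture (geometric definition of $\Phi$, well-definedness by contradiction via the Martin kernel, surjectivity by compactness of the Martin compactification) matches the paper's proof of Theorem~\ref{thm:surj}, and your surjectivity and continuity steps are essentially the paper's. But your central well-definedness argument takes a genuinely different route. The paper never establishes a positive lower bound on $K(x,y_n)$ along the ``good'' tail: it only uses the vanishing structure $K(u,y)=0$ for $y\notin\mho(u)$, together with harmonicity of the limit function $K(\cdot,\xi)$ and Assumption~(B), to produce at \emph{every} level $M$ some vertex $u$ with $K(u,\xi)\neq 0$; it then shows that two tails converging to distinct Gromov boundary points force $K(u,\xi)=0$ for all $u$ deep enough, a contradiction. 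This avoids any Harnack-type input. Your route instead needs the quantitative lower bound $K(x,y_n)\geq c(x)>0$, and that bound does \emph{not} follow from Lemma~\ref{lem:multiplicative} alone: that lemma gives $F(o,y)\leq N_1\sup_{t\in N(x)}F(o,t)F(t,y)$, and converting this into a lower bound on $F(x,y)/F(o,y)$ requires comparing $F(t,y)$ with $F(x,y)$ for $t\in N(x)$, which is exactly the weak Harnack inequality of Lemma~\ref{lem:Harnack} (this is how the paper proves its Lemma~\ref{lem:shadowKernel}, but only in Section~\ref{sct:harmonicMeasure}, for the dimension formula). Your approach is therefore logically sound but imports heavier machinery than the theorem needs; the paper's argument is the more elementary one.

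Two details in your contradiction step need tightening. First, $A_{y_n}\subseteq A_x$ can fail when $p$ lies on the topological boundary of $A_x$; what you actually need is $y_n\in\mho(x)$, which follows for large $n$ from Lemma~\ref{lem:inShadow} since $A_{y_n}$ eventually lies in $B\bigl(A_x,C_3e^{-a\abs{x}}\bigr)$. Second, $A_{y'_n}\cap A_x=\emptyset$ does \emph{not} imply $K(x,y'_n)=0$: the shadow $A_{\mho(x)}$ is a full $C_1e^{-a\abs{x}}$-neighborhood of $A_x$ by Lemma~\ref{lem:shadow1}, so a tile disjoint from $A_x$ can still lie in $\mho(x)$. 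You must choose $\abs{x}$ large enough (depending on $\rho(p,q)$) that $q$ lies outside $B\bigl(A_x,C_1e^{-a\abs{x}}\bigr)$, forcing $y'_n\notin\mho(x)$ and hence $F(x,y'_n)=0$ for large $n$. With these repairs, and with the lower bound correctly sourced from Lemma~\ref{lem:Harnack} rather than Lemma~\ref{lem:multiplicative}, your argument goes through.
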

	By \emph{natural surjection}, we mean that the identity map on $\Gamma$ extends continuously to a surjection $\Phi$ from the Martin boundary to the Gromov boundary. Note that the Gromov boundary is naturally homeomorphic to $X$ due to Theorem~\ref{thm:hyp}.

	One may ask whether the natural surjection in Theorem~\ref{thm:main1} is a homeomorphism or not. Unfortunately, the answer is no in general. We provide a family of counterexamples of random walks on the tile graph of the doubling map on the circle in Section~\ref{sct:MartinBoundary}. 

	\begin{theorem} \label{thm:main1counter}
		Under the notations and hypotheses in Theorem~\ref{thm:main1}, there is a choice of $(X,f)$, $\alpha$, and $\Gamma$, such that for some choices of $P$, the surjection $\Phi$ in Theorem~\ref{thm:main1} is a homeomorphism, while for other choices of $P$, $\Phi$ is not a homeomorphism.
	\end{theorem}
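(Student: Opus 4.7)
My plan is to realize both phenomena on the simplest nontrivial system: let $X = S^1$ be the circle, $f\colon x \mapsto 2x \bmod 1$ the doubling map, and $\alpha = \{A_0, A_1\}$ the partition into two closed half-circles (replacing $\alpha$ by a refinement under $f^n$ if needed to meet the ``sufficiently fine'' hypothesis of Theorem~\ref{thm:main1}). The tile graph $\Gamma$ is then a ``fattened binary tree'': vertices are finite binary words, tree edges realize the parent--child structure, and extra same-level edges connect words whose dyadic arcs share an endpoint. The key structural feature is that for every $n \geq 1$ the vertices $0^n$ and $1^n$ are joined by an edge (their arcs both contain the basepoint $0 \in S^1$); hence $(0^n, 1^n)_o \to \infty$ and both sequences converge in the Gromov boundary to a single point, which by Theorem~\ref{thm:hyp} corresponds to a single point of $X$.

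For the homeomorphism half, I take the maximally symmetric admissible $P$: for $u \in \Gamma$ with $|u| \geq 1$, let $P(u)$ be uniform over the four level-$(|u|+1)$ neighbors of $u$, namely the two children $u0, u1$ and the two sideways-descendants coming from the same-level neighbors of $u$. A direct check gives (A)--(D) together with invariance under the involution $\tau$ of $\Gamma$ swapping the letters $0$ and $1$. Combining this symmetry with the Harnack and Ancona-type estimates of Lemmas~\ref{lem:Harnack} and~\ref{lem:multiplicative}, I expect to show that any two sequences in $\Gamma$ converging to the same point of $X$ yield the same Martin limit. Thus $\Phi$ is injective, and being a continuous bijection between compact Hausdorff spaces, is a homeomorphism.

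For the non-homeomorphism half, I break the symmetry in a controlled way. Fix a small $\varepsilon > 0$ and set $\hP(u, u0) = \hP(u, u1) = \tfrac{1 - \varepsilon}{2}$, with mass $\tfrac{\varepsilon}{2}$ on each of the two sideways-descendants of $u$, extended shift-equivariantly. The subgraphs spanned by $\{0u : u \in \Gamma\}$ and $\{1u : u \in \Gamma\}$ are connected only through the seam $\{0^n, 1^n : n \geq 1\}$, and each seam-crossing costs a factor of order $\varepsilon$ in the Green function. I expect to deduce that $G(1^k, 0^n)/G(0^k, 0^n)$ and $G(0^k, 1^n)/G(1^k, 1^n)$ have limits $\lambda_k, \mu_k$ strictly less than $1$ for $\varepsilon$ small enough. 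Passing to Martin limits gives two distinct positive harmonic functions $K_0, K_1$ arising from $\{0^n\}$ and $\{1^n\}$, both lying over the point $0 \in X$; hence $\Phi$ fails to be injective.

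The main obstacle is the quantitative step in the counterexample: certifying $\lambda_k, \mu_k < 1$ strictly. My plan is to use the shift-equivariance (D) to reduce the Green-function ratios across the seam to a self-similar finite-state recursion, and to show that for $\varepsilon$ small the recursion has two distinct stable directions, one for each branch; a further argument using Lemma~\ref{lem:multiplicative} then identifies the minimal nature of the resulting Martin limits. By contrast, the homeomorphism direction is comparatively soft, relying principally on the $\tau$-symmetry together with the Ancona-type comparison already established.
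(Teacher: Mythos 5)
Your choice of system (doubling map on the circle with the two-arc Markov partition) matches the paper's, and your ``homeomorphism'' example --- the uniform law on the four level-$(|u|{+}1)$ neighbors --- is the paper's $p_x$ with $x=1/4\in(0,2/5)$, which the paper does show gives a homeomorphism (though not via symmetry plus an Ancona-type inequality, which is unavailable here since the classical Harnack inequality fails for these one-sided walks; the paper instead runs a case analysis with a projective contraction argument on $3\times3$ and $4\times4$ local transition matrices). The genuine gap is in the counterexample half: your symmetric $\varepsilon$-perturbation does \emph{not} produce two Martin points over $0\in S^1$, and the claimed strict inequalities $\lambda_k,\mu_k<1$ are false. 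Every path from level $k$ to $0^n$ or $1^n$ passes through the seam $\{0^j,1^j\}$ at each level $j$, and on the seam your transition law restricts to the irreducible symmetric matrix
\begin{equation*}
M=\begin{pmatrix}\tfrac{1-\varepsilon}{2} & \tfrac{\varepsilon}{2}\\[2pt] \tfrac{\varepsilon}{2} & \tfrac{1-\varepsilon}{2}\end{pmatrix},
\end{equation*}
whose Perron eigenvalue $1/2$ has eigenvector $(1,1)$, the second eigenvalue being $(1-2\varepsilon)/2$. Hence
\begin{equation*}
\frac{F(1^k,0^n)}{F(0^k,0^n)}=\frac{1-(1-2\varepsilon)^{\,n-k}}{1+(1-2\varepsilon)^{\,n-k}}\longrightarrow 1,
\end{equation*}
so $\lambda_k=1$, and since the Martin kernel of any limit over $0$ is determined by its values on the seam, the Martin limits of $\{0^n\}$ and $\{1^n\}$ are the \emph{same} point. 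The intuition that ``each seam crossing costs a factor $\varepsilon$'' misleads you: what matters is not the size of the crossing probability but the irreducibility of the seam subchain, which mixes the two columns at every level and washes out the initial condition.

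To actually break injectivity you must make the law unbalanced among the four neighbors, not merely small on the sideways ones. The paper's $p_x$ gives weight $x$ to the single neighbor with index $\equiv 2 \bmod 4$ and $(1-x)/3$ to the other three; at the point $1/2$ the relevant local matrix is lower block-triangular with a $1\times1$ block of eigenvalue $x$ (carried by the tile \emph{adjacent to but not containing} $1/2$) and a $2\times2$ block of Perron eigenvalue $2(1-x)/3$ (carried by the tiles containing $1/2$). Non-injectivity occurs exactly when $x>2(1-x)/3$, i.e.\ $x>2/5$: then the initial vectors $(1,0,0)$ and $(0,1,0)$ have distinct projective limits under $M^n$, yielding two distinct Martin kernels over $1/2$. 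Your construction has no such dominant off-diagonal eigenvalue (all your local matrices have a nonnegative Perron direction attracting every relevant initial vector), so if anything it lands on the homeomorphism side for all $\varepsilon$. The proposal therefore establishes at most one half of the theorem and needs an essentially different, asymmetric perturbation for the other half.
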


	In the study of the Martin boundary and the harmonic measures for a random walk, one of the crucial concepts is the \defn{Green function}
	\begin{equation} \label{eq:green0}
		G(x,y) \= \bE_x\biggl(\sum_{n=0}^{+\infty} \bOne_y(Z_n) \biggr) = \sum_{n=0}^{+\infty} \bP_x(Z_n=y) = \sum_{n=0}^{+\infty} \hP^{(n)}(x,y),
	\end{equation}
	for $x,y\in\Gamma$, where $\bE_x(f)$ denotes the expectation of a random variable $f$ of the random walk $\{Z_n\}$ with $Z_0=x$, $\bP_x(A)$ denotes the probability of an event $A$ of the random walk $\{Z_n\}$ with $Z_0=x$, and for each integer $n>1$,
	\begin{equation*}
		\hP^{(n)}(x,y) \= \sum_{x_1, \dots ,x_{n-1}\in\Gamma} \hP(x,x_1)\hP(x_1,x_2)\cdots \hP(x_{n-1},y)
	\end{equation*}
	is the probability of $Z_n=y$ when $Z_0=x$. Let 
	\begin{equation} \label{eq:F}
		F(x,y)\=\bP_x(\exists n\in\Z_{\ge 0}, Z_n=y)
    \end{equation}
    be the probability that the random walk started at $x$ ever hits $y$. If Assumption~(B) holds, then for each $x\in\Gamma$, $G(x,x)=1$. Since for each $x,y\in\Gamma$, $G(x,x)F(x,y)=G(x,y)$, the functions $F$ and $G$ coincide. Hence, except for Section~\ref{sct:MartinBoundary}, we use both notations for the same function.

	If $\{Z_n\}$ $\bP_o$-a.s.~converges to a point $Z_{\infty}$ in some boundary $\partial\Gamma$, then we can define the harmonic measure on $\partial\Gamma$ by
	\begin{equation*}
		\nu^{\partial\Gamma}(A) \= \bP_o(Z_\infty\in A), \quad \text{for each Borel measurable subset } A \subseteq \partial\Gamma.
	\end{equation*}
	It is the escape distribution of the random walk $\{Z_n\}$ to such a boundary. In particular, as long as the random walk is transient, $\{Z_n\}$ $\bP_o$-a.s.~converges to a point $Z_{\infty}\in\Mb\Gamma$ in the Martin boundary. Hence, on the Martin boundary, the harmonic measure $\nu^{\Mb\Gamma}$ can be defined. As an immediate consequence of Theorem~\ref{thm:main1}, the harmonic measure can also be defined on the phase space $X$ by $\nu \= \Phi_*\nu^{\Mb\Gamma}$.

	To quantitatively study the fractal property of a measure, one of the effective ways is to calculate its fractal dimensions. For a metric space $(X,\rho)$, we recall that the \defn{Hausdorff dimension} of $X$ is defined by
	\begin{equation*}
		\dim_H(X) \= \inf \Bigl\{\delta>0 : \lim_{r\to 0}\inf_{\{U_i\}_{i\in\Z_{>0}}} \sum_{i\in\Z_{>0}}(\diam U_i)^{\delta} =0 \Bigr\},
	\end{equation*}
	where the second infimum is taken over all countable covers $\{U_i\}$ of $X$ such that for each $i\in\Z_{>0}$, $\diam U_i<r$. The \defn{packing dimension} of $X$ is defined by
	\begin{equation*}
		\dim_P(X) \= \inf \Bigl\{\delta>0 : \inf_{\{A_i\}_{i\in\Z_{>0}}}\sum_{i\in\Z_{>0}}\Bigl(\lim_{r\to 0}\sup_{\{(x_{i,j},r_{i,j})\}_{j\in\Z_{>0}}} \sum_{j\in\Z_{>0}}(r_{i,j})^{\delta}\Bigr) =0 \Bigr\},
	\end{equation*}
	where the second infimum is taken over all covers $\{A_i\}$ of $X$, and the supremum is taken over all countable pairs $\{(x_{i,j},r_{i,j})\}_{j\in\Z_{>0}}$ with $x_{i,j}\in A_i$, $r_{i,j} \in (0,r]$, and $\rho(x_{i,j}, x_{i,k}) \geq r_{i,j} + r_{i,k}$ for each pair of distinct $j,k\in\Z_{>0}$.

	We say that a measure $\mu$ on $X$ has \defn{Hausdorff dimension} $\delta$ if
	\begin{equation*}
		\inf\{\dim_H(A):A\subseteq X, \mu(A)>0\} = \inf\{\dim_H(A):A\subseteq X, \mu(A)=1\} = \delta.
	\end{equation*}

	We say that a measure $\mu$ on $X$ has \defn{packing dimension} $\delta$ if
	\begin{equation*}
		\inf\{\dim_P(A):A\subseteq X, \mu(A)>0\} = \inf\{\dim_P(A):A\subseteq X, \mu(A)=1\} = \delta.
	\end{equation*}

	For a detailed introduction to fractal dimensions, we refer the reader to see, for example, \cite[Chapter~8]{PU10}.

	To compute the dimension of the harmonic measure $\nu$, we need several asymptotic quantities associated with the random walk. Let $l_G$ be the almost sure limit of $-n^{-1} \log(G(Z_0,Z_n))$, $l$ be the almost sure limit of $\abs{Z_n}/n$. We show in Section~\ref{sct:harmonicMeasure} that both of them exist, and we have the following theorem about the fractal dimension of the harmonic measure.
	
	\begin{theorem}\label{thm:main2}
		Under the notations and hypotheses in Theorem~\ref{thm:main1}, if $X$ is equipped with an $a$-visual metric $\rho$ for a sufficiently small constant $a>0$, then the packing dimension of the harmonic measure $\nu$ on $X$ is equal to $\frac{l_G}{a l}$.
	\end{theorem}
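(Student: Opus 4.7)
My strategy is to compute the upper pointwise dimension
\begin{equation*}
  \overline{d}_\nu(x) \= \limsup_{r\to 0^+} \frac{\log \nu(B(x,r))}{\log r}
\end{equation*}
at $\nu$-almost every $x\in X$ and then invoke the characterization $\dim_P \nu = \mathop{\mathrm{ess\,sup}}_x \overline{d}_\nu(x)$ from \cite[Chapter~8]{PU10}. By transience and Theorem~\ref{thm:main1}, $\{Z_n\}$ converges $\bP_o$-a.s.~to some $Z_\infty \in \Mb\Gamma$, and $x\= \Phi(Z_\infty)$ is by construction distributed according to $\nu$. Hence it is enough to evaluate $\overline{d}_\nu(x)$ along $\bP_o$-typical trajectories.

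Along such a trajectory, the ergodic-theoretic arguments from Section~\ref{sct:harmonicMeasure} yield $\abs{Z_n}/n \to l$ and $-\frac{1}{n}\log G(o,Z_n) \to l_G$. The central technical step I would establish is a \emph{shadow lemma}: for each $v\in\Gamma$, if $S(v)\subseteq X$ denotes the image under $\Phi$ of the Martin boundary points represented by trajectories eventually passing through $v$ (essentially an appropriate neighborhood of the tile $A_v$), then there is a constant $C\ge 1$ independent of $v$ such that
\begin{equation*}
  C^{-1} G(o,v) \leq \nu(S(v)) \leq C\, G(o,v).
\end{equation*}
Combined with the $a$-visual metric estimate $\diam S(v) \asymp e^{-a\abs{v}}$, this yields, along a $\bP_o$-typical trajectory,
\begin{equation*}
  \frac{\log \nu(S(Z_n))}{\log \diam S(Z_n)} \longrightarrow \frac{l_G}{a l}.
\end{equation*}
To transfer this into a statement for metric balls, I would then sandwich $B(x,r)$ between two consecutive shadows $S(Z_{n+1}) \subseteq B(x,r) \subseteq S(Z_n)'$ (where $S(Z_n)'$ denotes a bounded enlargement) for $n$ chosen so that $\diam S(Z_n) \asymp r$; this is where the smallness of $a$ enters, ensuring that the visual quasi-metric is bi-Hölder to the actual metric $\rho$ and that the levels $\abs{Z_n}$, $\abs{Z_{n+1}}$ differ by at most a bounded amount under Assumption~(A). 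The resulting identity $\overline{d}_\nu(x)=l_G/(al)$ for $\nu$-a.e.~$x$ gives the theorem.

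\textbf{Main obstacle.} The heart of the argument is the shadow lemma. In the classical setting of random walks on hyperbolic groups, both sides follow from Ancona's inequality $G(o,v)\asymp G(o,w)G(w,v)$ for $w$ near a geodesic from $o$ to $v$, together with the Harnack principle. Both fail here because Assumption~(B) makes the random walk reducible; only the multiplicative substitute of Lemma~\ref{lem:multiplicative} and the directional Harnack inequality of Lemma~\ref{lem:Harnack} are available, and these must be propagated through the surjection $\Phi$, which need not be injective (Theorem~\ref{thm:main1counter}). Consequently a single $x\in X$ may have several preimages in $\Mb\Gamma$ contributing to $\nu(S(v))$, and the two-sided comparison requires a combinatorial control of how many tiles of a given level can contain a common point; the assumption that $\alpha$ is a sufficiently fine Markov partition is what will make this overlap bounded, so that the multiplicities are absorbed into the constant $C$.
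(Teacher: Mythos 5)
Your overall architecture (pointwise upper dimension via shadows of $Z_n$, the ergodic limits $\abs{Z_n}/n\to l$ and $-\tfrac1n\log G(o,Z_n)\to l_G$, then the $\limsup$ characterization of packing dimension from \cite{PU10}) matches the paper's. But the central step you propose --- a \emph{uniform} two-sided shadow lemma $C^{-1}G(o,v)\le\nu(S(v))\le C\,G(o,v)$ with $C$ independent of $v$ --- is false in this setting, and the failure is not the bounded-overlap issue you flag at the end. The lower bound $\nu(S(v))\gtrsim G(o,v)$ does hold uniformly (the paper proves $\nu(A_{\mho(u)})\ge F(o,u)$ using only $K(u,\cdot)\le 1/F(o,u)$). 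The upper bound, however, requires comparing $\sum_{w\in N(v)}F(o,w)$ with $F(o,v)$, i.e., a Harnack inequality $F(o,w)\le C\,F(o,v)$ for $w$ adjacent to $v$ at the same level --- and this is exactly what Assumption~(B) destroys. The counterexample of Section~\ref{sct:MartinBoundary} makes this concrete: for the doubling map with $x\in(2/5,1)$, the adjacent level-$n$ vertices $x_n,y_n$ satisfy $F(o,x_n)\asymp x^n$ while $F(o,y_n)\asymp(2(1-x)/3)^n$, so $F(o,x_n)/F(o,y_n)\to+\infty$. Since $A_{x_n}$ lies within distance $\asymp e^{-an}$ of $A_{y_n}$, any shadow $S(y_n)$ large enough to sandwich metric balls absorbs the shadow of $x_n$, whence $\nu(S(y_n))\gtrsim F(o,x_n)\gg F(o,y_n)$. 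No refinement of the Markov partition or multiplicity count repairs this; the ratio of Green functions at adjacent vertices is genuinely unbounded.

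The paper's way around this is to prove the upper bound on $\nu(B(Z_\infty,r))$ only at a random sequence of good scales rather than at all scales: it introduces events $B_n\cap B^1_{n+k_0}$ on which the walk steps ``deep inside'' its current tile (so that $N(Z_{n+k_0})$ sits entirely inside the shadow of $Z_n$ and the directional weak Harnack inequality of Lemma~\ref{lem:Harnack}, combined with Lemma~\ref{lem:multiplicative}, bounds $\sum_{w\in N(Z_{n+k_0})}F(o,w)$ by a constant times $F(o,Z_{n+k_0})$), shows by a Borel--Cantelli argument that these events occur infinitely often a.s., and concludes that the $\limsup$ of $\log\nu(B(\xi,r))/\log r$ equals $l_G/(al)$. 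This is precisely why the theorem computes the packing dimension and not the Hausdorff dimension: your uniform shadow lemma would yield the full pointwise limit and hence $\dim_H\nu=l_G/(al)$ as well, which the paper does not claim (the corollary only gives $\dim_H\nu\le h/(al)$). To repair your argument you would need to replace the uniform upper shadow bound by one valid along an a.s.\ infinite set of times, which is the content of the paper's construction of $v_0(u)$, $v_1(u)$ and the events $B_n$, $B_n^1$.
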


	Note that it is easy to see that the \emph{asymptotic entropy} $h$ is no less than the \emph{Green drift} $l_G$. More precisely,
	\begin{equation*}
		h\=\lim_{n\to+\infty} \bE \bigl( -\log \hP^{(n)}(o,Z_n) \bigr) \ge \lim_{n\to+\infty} \bE(-\log F(o,Z_n)) \eqqcolon l_G.
	\end{equation*}
	Since the Hausdorff dimension of $\nu$ is not greater than the packing dimension of $\nu$, we have the following corollary of Theorem~\ref{thm:main2}.
	\begin{cor}
		Under the notations and the hypotheses in Theorem~\ref{thm:main1}, if $X$ is equipped with an $a$-visual metric $\rho$ for a sufficiently small constant $a>0$, then the Hausdorff dimension of the harmonic measure is not greater than $\frac{h}{a l}$.
	\end{cor}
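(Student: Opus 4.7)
The plan is to chain together three ingredients: the identity of the packing dimension given by Theorem~\ref{thm:main2}, the elementary fact that Hausdorff dimension is always dominated by packing dimension, and the inequality $h \ge l_G$ which the paragraph preceding the corollary already records.

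First, I would appeal directly to Theorem~\ref{thm:main2} to rewrite the packing dimension of $\nu$ as $l_G/(a l)$, which is legitimate since the hypotheses of the corollary are exactly those of Theorem~\ref{thm:main2}. Second, I would use the standard comparison $\dim_H A \le \dim_P A$ for every Borel $A \subseteq X$ (a direct consequence of the definitions recalled just above the corollary, and formalized, e.g., in \cite[Chapter~8]{PU10}). Applying this comparison set-by-set and then passing to the infima occurring in the definitions of the Hausdorff and packing dimensions of a measure yields $\dim_H \nu \le \dim_P \nu = l_G/(al)$.

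Third, I would justify the bound $h \ge l_G$ that the author asserts to be ``easy''. By Assumption~(B) and the remark following \eqref{eq:F}, $F$ and $G$ coincide, so for every $y\in\Gamma$ and every $n\in\Z_{\ge 0}$,
\begin{equation*}
    F(o,y) \;=\; G(o,y) \;=\; \sum_{k=0}^{+\infty} \hP^{(k)}(o,y) \;\ge\; \hP^{(n)}(o,y).
\end{equation*}
In particular, $F(o,Z_n) \ge \hP^{(n)}(o,Z_n)$ $\bP_o$-almost surely, and hence $-\log F(o,Z_n) \le -\log \hP^{(n)}(o,Z_n)$. Taking expectations and passing to the limit (both limits exist by the results established in Section~\ref{sct:harmonicMeasure}) gives $l_G \le h$. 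Substituting this into the previous inequality yields $\dim_H \nu \le l_G/(al) \le h/(al)$, as claimed.

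There is no real obstacle here; the argument is essentially a bookkeeping exercise combining three inputs. The only thing to be careful about is that the suprema/infima defining the two dimensions of a measure behave correctly under the pointwise comparison $\dim_H A \le \dim_P A$; this is immediate because the defining infima range over the same families of Borel sets for both dimensions.
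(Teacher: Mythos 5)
Your proposal is correct and follows essentially the same route as the paper: the corollary is obtained by combining Theorem~\ref{thm:main2}, the pointwise comparison $\dim_H \le \dim_P$, and the inequality $h \ge l_G$, which the paper justifies by exactly the observation you make, namely that $F(o,Z_n) = G(o,Z_n) = \sum_{k}\hP^{(k)}(o,Z_n) \ge \hP^{(n)}(o,Z_n)$. Your write-up merely fills in slightly more detail than the paper's one-paragraph derivation preceding the corollary.
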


	The formula of the Hausdorff dimension of the harmonic measure of hyperbolic groups was established by S.~Blach\`{e}re, P.~Haïssinsky, and P.~Mathieu in \cite{BHM11}. This dimension formula is closely related to the dimension formula that under some assumption, the Hausdorff dimension of an ergodic measure $\nu$ on $X$ is equal to the entropy divided by the Lyapunov exponent, i.e., $\dims \nu = h_\nu(f)/\chi_\nu(f)$.

	Finally, we establish the quasi-invariance of the harmonic measure.
	
	\begin{theorem}\label{thm:main3}
		Under the notations and hypotheses in Theorem~\ref{thm:main1}, the harmonic measure $\nu$ on $X$ is quasi-invariant under $f$, more precisely, $\nu$ and $f_*\nu$ are absolutely continuous to each other with both of the two Radon--Nikodym derivatives bounded.
	\end{theorem}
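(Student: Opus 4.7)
The plan is to compare $\nu$ and $f_*\nu$ on the tile basis $\{A_u\}_{u\in\Gamma}$, which forms a generating family of neighborhoods for the Borel $\sigma$-algebra on $X$. The first ingredient is the tile estimate
\begin{equation*}
  \nu(A_u) \;\asymp\; F(o,u)
\end{equation*}
for every $u\in\Gamma$ with $\abs{u}$ sufficiently large. This is the identification of the harmonic measure of a shadow with the hitting probability, and I expect it to be established in the paper's harmonic-measure analysis (Section~\ref{sct:harmonicMeasure}) as part of the preparation for Theorem~\ref{thm:main2}.

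By the Markov property of $\alpha$, $f^{-1}(A_u)=\bigcup_{v:\sigma v=u}A_v$ essentially disjointly (the overlap sits on tile boundaries, which are $\nu$-null), so
\begin{equation*}
  f_*\nu(A_u) \;=\; \sum_{v:\sigma v=u}\nu(A_v) \;\asymp\; \sum_{v:\sigma v=u}F(o,v).
\end{equation*}
The heart of the proof is the comparison
\begin{equation*}
  \sum_{v:\sigma v=u}F(o,v) \;\asymp\; F(o,u).
\end{equation*}
I would first establish, for every $w\in\Gamma$ with $\abs{w}\ge1$ and every $u$ with $\abs{u}\ge\abs{w}$, the pointwise identity
\begin{equation*}
  \sum_{v:\sigma v=u} F(w,v) \;=\; F(\sigma w,u).
\end{equation*}
Two ingredients enter: by Assumption~(B) the walk strictly increases level, so it hits each level at most once and the events $\{\exists n,\,Z_n=v\}$ for $v\in\sigma^{-1}(u)$ are pairwise disjoint; by Assumption~(D) the $\sigma$-projection of the walk from $w$ is a walk from $\sigma w$, whence the union of these events equals the event that the projected walk visits $u$.

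Conditioning on the first step of the walk from $o$,
\begin{equation*}
  \sum_{v:\sigma v=u}F(o,v) \;=\; \sum_{w}\hP(o,w)\,F(\sigma w,u).
\end{equation*}
The terms with $\abs{w}=1$ have $\sigma w=o$ and contribute $\bigl(\sum_{\abs{w}=1}\hP(o,w)\bigr)F(o,u)$; by Assumption~(C) this coefficient is strictly positive, giving the lower bound. For the upper bound, the strong Markov property supplies $F(o,u)\ge F(o,\sigma w)\,F(\sigma w,u)$, so $F(\sigma w,u)\le F(o,u)/F(o,\sigma w)$. Since $\abs{w}\le R$ by Assumption~(A), the vertex $\sigma w$ lies in the finite set $\{y\in\Gamma:\abs{y}\le R-1\}$; Assumption~(C) furnishes, for each such $y$, a chain of positive-probability one-level jumps from $o$ to $y$, whence $c_0\=\min\{F(o,y):\abs{y}\le R-1\}>0$ and every summand is bounded by $F(o,u)/c_0$. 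A standard covering argument then upgrades the tile-wise comparison $f_*\nu(A_u)\asymp\nu(A_u)$ to $c\,\nu\le f_*\nu\le C\,\nu$ as measures on $X$, which is equivalent to the claimed uniform boundedness of both Radon--Nikodym derivatives.

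The main obstacle is not the chain of comparisons above, which is essentially elementary once Assumptions~(A)--(D) are in hand, but the tile estimate $\nu(A_u)\asymp F(o,u)$; establishing it requires the full strength of the Martin boundary theory developed in this paper, likely through Lemma~\ref{lem:Harnack}. Notably, the Ancona-type Lemma~\ref{lem:multiplicative} is \emph{not} needed for the argument sketched above: only the ``trivial'' direction $F(o,u)\ge F(o,\sigma w)F(\sigma w,u)$ is used, which is immediate from the strong Markov property and does not depend on hyperbolicity.
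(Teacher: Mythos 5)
The middle of your argument --- the identity $\sum_{v:\sigma v=u}F(w,v)=F(\sigma w,u)$ and the resulting comparison $\sum_{v:\sigma v=u}F(o,v)\asymp F(o,u)$ --- is correct, and it is essentially the Green-function shadow of what the paper actually proves. But the step you yourself flag as the main obstacle, the tile estimate $\nu(A_u)\asymp F(o,u)$, is a genuine gap: the lower bound $\nu(A_u)\gtrsim F(o,u)$ does hold (a walk that hits $u$ converges into $A_u$ with probability bounded below, via the sets $S(u)$ from the proof of Lemma~\ref{lem:deflG}), but the upper bound fails in general. A walk converging to a point of $A_u$ need not ever visit $u$; it only has to pass through some tile adjacent to $u$ at a nearby level, so the best available bound is $\nu(A_u)\lesssim\sum_{v\in N(u)}F(o,v)$, and converting this into $\lesssim F(o,u)$ requires a Harnack inequality $F(o,v)\lesssim F(o,u)$ for neighbors $v$ of $u$ --- exactly the inequality the paper states does \emph{not} hold here. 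The doubling-map example of Section~\ref{sct:MartinBoundary} with $x\in(2/5,1)$ makes this concrete: for the adjacent vertices $x_n,y_n$ there one has $F(o,x_n)\asymp x^n$ while $F(o,y_n)\asymp(2(1-x)/3)^n$, and since $\hP(x_n,x_{n+1})=x$ with $A_{x_{n+1}}\subseteq A_{y_n}$, the walk deposits mass into $A_{y_n}$ through $x_n$, giving $\nu(A_{y_n})\gtrsim xF(o,x_n)\gg F(o,y_n)$. The weak Harnack inequality (Lemma~\ref{lem:Harnack}) cannot repair this: its constant $C_2(u,v)$ depends on the pair $(u,v)$ and compares $F(u,w)$ with $F(v,w)$ only for $w$ deep in the shadow of $u$; it gives no uniform comparison of $F(o,u)$ with $F(o,v)$ over adjacent pairs. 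This is precisely why the proof of Theorem~\ref{thm:main2} works only along the ``good times'' $B_n\cap B^1_{n+k_0}$ rather than with a uniform tile estimate.

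The paper's proof of Theorem~\ref{thm:main3} sidesteps tiles and Green functions altogether. It introduces the family $\nu_u\coloneqq\Phi_*\nu_u^{\Mb\Gamma}$ and proves (Lemma~\ref{lem:harm}) the two exact identities $\nu_u=\sum_{w\in\Gamma}\hP(u,w)\nu_w$ and $\nu_{\sigma u}=f_*\nu_u$ for $u\neq o$, the latter being the measure-level version of your Green-function identity. Combining them yields the exact decomposition
\begin{equation*}
	f_*\nu \;=\; \Bigl(\sum_{\abs{w}=1}\hP(o,w)\Bigr)\nu \;+\; \sum_{\abs{w}>1}\hP(o,w)\,\nu_{\sigma w},
\end{equation*}
from which $\nu\ll f_*\nu$ with bounded density is immediate (Assumption~(C) makes the first coefficient positive, and all terms are nonnegative measures), while $f_*\nu\ll\nu$ follows because iterating the first identity gives $\nu_{w'}\le\hP^{(\abs{w'})}(o,w')^{-1}\nu$ for every $w'$. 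If you want to salvage your approach, replace the scalar $F(o,u)$ by the measure $\nu_u$ throughout --- i.e., prove and use $f_*\nu_u=\nu_{\sigma u}$ directly --- rather than attempting the two-sided estimate $\nu(A_u)\asymp F(o,u)$.
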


	\smallskip
	
	We will now give a brief description of the structure of this paper.

	In Section~\ref{sct:Preliminaries}, we recall the background of uniformly expanding dynamical systems on a compact metric space and Markov partitions associated with it. We also recall some background for random walks on discrete infinite graphs, the Martin boundary, and the harmonic measures associated with it. Finally, we build a graph $\Gamma$ called the tile graph from a fixed Markov partition $\alpha$ on $X$. The random walk will take place on this graph. This graph is equipped with a shift map $\sigma$ induced by $f$, and the tile graph shares similar properties with the Cayley graph in the group theory.

	In Section~\ref{sct:randomWalk}, we study basic properties of $\sigma$-invariant random walks with the transition probability $P$ satisfying Assumptions in Section~\ref{sct:Introduction}. Some lemmas about the shadow will also be proved in this section.

	In Section~\ref{sct:MartinBoundary}, we give a proof of Theorem~\ref{thm:main1}. Then we provide a class of examples to show that the surjection in Theorem~\ref{thm:main1} may not be a homeomorphism, establishing Theorem~\ref{thm:main1counter}.

	In Section~\ref{sct:harmonicMeasure}, we study the ergodic properties of the random process and justify the definition of the asymptotic quantities in the dimension formula of the harmonic measure in Theorem~\ref{thm:main2}. Then based on the estimation of the Martin kernels, we give a proof of Theorem~\ref{thm:main2}.
	
	Finally, in Section~\ref{sct:quasi-invariance}, we study some basic dynamical properties of the harmonic measure and establish Theorem~\ref{thm:main3}.

	In the appendix, we provide proofs of Theorem~\ref{thm:hyp} and Propositions~\ref{prop:visualMetric} and ~\ref{prop:visualLocal}. They are properties of the tile graph and the visual metrics that are used in this article.

	\subsection*{Acknowledgments} 
     The authors thank  Mario~Bonk, Manfred~Denker, Wenyuan~Yang, Yiwei~Zhang, and Tianyi~Zheng for interesting discussions.

\section{Preliminaries} \label{sct:Preliminaries}
	In this section, we state our settings of distance-expanding dynamical systems $(X, f)$ and review the notion of Markov partitions. Then we review the construction of the Martin boundary and the harmonic measure.

In this article, if two functions $f$ and $g$ are positive, then $f\lesssim g$ means that there is a universal constant $C>0$ such that $f\le Cg$. We write $f\asymp g$ if both $f\lesssim g$ and $f\gtrsim g$ hold.

	\subsection{Uniformly expanding maps and Markov partitions} \label{subsect:expandingDyn}

		In this subsection, we review the definition and some known properties of an open, transitive, and distance-expanding map. Then we recall the notion of Markov partitions and symbolic dynamical systems induced by a distance-expanding map.

		Let $(X, \rho)$ be a compact metric space. We denote by $C(X)$ the space of continuous functions on $X$, and by $\cM(X)=C(X)^*$ the space of Borel probability measures on $X$. For a point $x\in X$ and subsets $A, B\subseteq X$, we denote
		\begin{align*}
			\rho(x,A) &\= \inf\{\rho(x,y):y\in A\} \quad\text{and} \\
			\rho(A,B) &\= \inf\{\rho(x,y):x\in A, y\in B\}.
		\end{align*}
		A ball of radius $r\in (0,+\infty)$ centered at $x\in X$ is denoted by
		\begin{equation*}
			B(x,r) = \{y\in X : \rho(x,y) < r\}.
		\end{equation*}
		The $r$-neighborhood of a subset $A\subseteq X$ is denoted by
		\begin{equation}\label{def:neighborhood}
			B(A, r) = \{x\in X: \rho(x,A)<r\}.
		\end{equation}

		We assume that a map $f\colon X\to X$ satisfies the following assumptions.

		\smallskip
		
		\textbf{Assumptions:}
		
		\begin{enumerate}[\hspace{2em}(i)]
			\smallskip
			\item $f\: X \to X$ is continuous on a compact metric space $(X,\rho)$ and is \defn{topologically transitive}, i.e., for each pair of nonempty open sets $U$ and $V$, there exists a number $n\in\Z_{>0}$ such that $f^nU\cap V\neq\emptyset$. \smallskip
			\item $f$ is \defn{open}, i.e., the image of an open set is open.
			\smallskip
			\item $f$ is \defn{distance-expanding}, i.e., there exist constants $\xi>0$ and $\lambda>1$ such that for each pair of points $x,y\in X$ with $\rho(x,y)\le \xi$, we have $\rho(fx,fy)\ge \lambda \rho(x,y)$. By Assumption~(ii), $f$ is a local homeomorphism. So, for the sake of convenience, we assume, moreover, that $f|_{B(x,\xi)}$ is a homeomorphism to its image.
		\end{enumerate}

		We also denote by $\cM(X,f)$ the subspace of $f$-invariant measures on $X$ in $\cM(X)$.

		In the sequel, we say that a dynamical system $(X,f)$ satisfies the \textit{Assumptions in Subsection~\ref{subsect:expandingDyn}} if $f$ satisfies~(i), (ii), and~(iii).

		Then we briefly review the notion of Markov partitions and subshifts of finite type. With the help of a Markov partition, we can obtain a subshift of finite type for an open distance-expanding map.

		Let $S$ be a finite nonempty set, and $M\:S\times S\to \{0,1\}$ be a matrix with entries being either $0$ or $1$. We denote the \defn{set of admissible sequences defined by $M$} by
		\begin{equation*}
			\Sigma_M^+ \= \bigl\{ \{x_i\}_{i\in\Z_{\ge 0}} : x_i \in S, \, M(x_i,x_{i+1})=1, \, \text{for each } i\in\Z_{\ge 0}\bigr\}.
		\end{equation*}
		The topology on $\Sigma_M^+$ is induced by the product topology, which is compact by the Tychonoff theorem since $S$ is a finite set.

		The \defn{left-shift operator $\sigma_M\colon\Sigma_M^+\to\Sigma_M^+$} is given by
		\begin{equation*}
			\sigma_M \bigl(\{x_i\}_{i\in\Z_{\ge 0}}\bigr) = \{x_{i+1}\}_{i\in\Z_{\ge 0}} \quad \text{for each }\{x_i\}_{i\in\Z_{\ge 0}} \in \Sigma_M^+.
		\end{equation*}

		The pair $(\Sigma_M^+, \sigma_M)$ is called the \defn{one-sided subshift of finite type} defined by $M$. The set $S$ is called the \defn{set of states}, and the matrix $M\: S\times S \to \{0,1\}$ is called the \defn{transition matrix}.
		
		Fixing a one-sided subshift of finite type $(\Sigma_M^+, \sigma_M)$, we denote by 
		\begin{equation*}
			[y_0, y_1, \dots,y_n]\=\bigl\{\{x_i\}_{i\in\Z_{\ge 0}}\in\Sigma_M^+:x_i=y_i, 0\le i\le n\bigr\}
		\end{equation*}
		the cylinders of the $(n+1)$-tuple $(y_{0},\dots,y_{n})\in M^{n+1}$ satisfying $M_{y_{i-1}y_i}=1$,  for each integer $1\le i\le n$.

		Let $X$, $Y$ be topological spaces, and $f\:X\to X$, $g\:Y\to Y$ be two continuous maps. We say that $(X,f)$ is \defn{topologically semi-conjugate} to $(Y,g)$ if there exists a continuous surjection $h: X\to Y$ such that $h\circ f=g\circ h$. If, furthermore, $h$ is a homeomorphism, then we say that $(X,f)$ is \defn{topologically conjugate} to $(Y,g)$.

		For distance-expanding dynamical systems, Markov partitions associate the original dynamical systems with symbolic dynamics. For a more detailed discussion about results on Markov partitions related to our context, see for example, \cite[Section 4.5]{PU10}.

		A finite cover $\alpha=\{A_1,\dots, A_N\}$ is called a \defn{Markov partition} if the following conditions hold:

		\begin{enumerate}[\hspace{2em}(a)]
			\smallskip
			\item $A_i = \overline{\inter A_i}$, for all $i\in\{1,\dots,N\}$;  
			\smallskip
			\item $\inter A_i \cap \inter A_j = \emptyset$ for all $i,j\in\{1,\dots,N\}$ with $i\neq j$;  
			\smallskip
			\item $f(\inter A_i) \cap \inter A_j \neq \emptyset$ implies $\inter A_j\subseteq f(\inter A_i)$ for all $i,j\in\{1,\dots,N\}$.
		\end{enumerate}

		We denote the mesh of the Markov partition $\alpha$ by $\mesh{\alpha} \= \max\{\diam A:A\in\alpha\}$.
		For open distance-expanding maps, there exist Markov partitions of arbitrarily small meshes (see for example, \cite[Theorem 4.5.2]{PU10}). Hence, we can attach to each of these maps a Markov partition with $\mesh\alpha < \xi$, where $\xi>0$ is the constant in the Assumptions in Subsection~\ref{subsect:expandingDyn}. Such a Markov partition $\alpha=\{A_1,\dots,A_d\}$ gives rise to a coding of $f\: X\to X$. More precisely, let $M$ be a $N\times N$ matrix with entries $0$ or $1$ depending on whether $f(\inter A_i)\cap \inter A_j$ is empty or not. Then there is a one-sided subshift of finite type $(\Sigma_M^+, \sigma_M)$ together with a topological semi-conjugation $\pi\:\Sigma_M^+\to X$ given by
		\begin{equation*}
		u=\{u_i\}_{i\in\Zn}\mapsto x\in A_u=\bigcap_{i=0}^{+\infty} f^{-i}A_{u_i}.
		\end{equation*}

	\subsection{Markov chains on graphs, Martin boundaries, and harmonic measures}

		In this subsection, we review some concepts related to discrete random walks on a countable graph.

		Recall that a \defn{graph} is a set $V$ for which an edge set $E$ has been specified, where an edge set $E$ on $V$ is a collection consisting of subsets of $V$ of cardinality~2, called \defn{edges}. Properly speaking, a graph is an ordered pair $(V, E)$ consisting of a set $V$ and an edge set $E$ on $V$, but we usually omit specific mention of $E$ if no confusion arises. Points in $V$ are also called \defn{vertices}.
		
		A subgraph $\Gamma'\subseteq\Gamma$ is a subset of the set $\Gamma$ equipped with the edge set $E|_{\Gamma'}\=\{\{u,v\}\in E:u,v\in V'\}$. Two graphs $(\Gamma_1, E_1)$ and $(\Gamma_2,E_2)$ are \defn{isomorphic} if and only if there is a bijection $f:\Gamma_1\to \Gamma_2$ such that $E_2=\{\{f(u),f(v)\}:\{u,v\}\in E_1\}$.

		Let $\Gamma$ be a graph with basepoint $o$ and $\Omega\=\Gamma^{\Z_{\ge 0}}$ be the sample space. A \defn{transition probability} on $\Gamma$ is a function $P\: \Gamma\to\cM(\Gamma)$. Recall that we denote $\hP(x,y)=P(x)(\{y\})$.
		A \defn{Markov chain} on $\Gamma$ is defined as a series of random variables \begin{equation*}
			Z_n\colon\Omega\to \Gamma,~n \in \Z_{\ge 0},
		\end{equation*} 
		with
		\begin{equation*}
			\bP(Z_{n+1}=u\,|\,\sigma(Z_0,\dots, Z_n)) = \hP(Z_n, u),
		\end{equation*}
		where $\sigma(Z_0,\dots, Z_n)$ denotes the $\sigma$-field generated by $Z_0,\dots, Z_n$. The random walk usually starts from the basepoint $o\in\Gamma$, in which case we define $Z_0 \= o$.
		
		By Kolmogorov's extension theorem, $\Omega$ admits a probability measure generated by the transition probability $P$ and the initial distribution $p$. If $p$ is equal to $\delta_u$, the Dirac measure at $u\in\Gamma$, then we denote the probability measure on $\Omega$ by $\bP_u$. In particular, for the case that $u=o$ is the base point of $\Gamma$, we write $\bP=\bP_o$ if we do not emphasize the choice of $u$. The sample space $(\Omega, \bP, \sigma(Z_0,Z_1,\dots))$ is then a probability measure space.

		Suppose that each state $w\in \Gamma$ in the Markov chain is \defn{transient}, i.e.,
		\begin{equation*}
			\bP_w(\min\{n\in\Z_{>0}:Z_n=w\}<+\infty)<1.
		\end{equation*}
		Then the Markov process ``escapes to infinity'' almost surely.
		
		To formalize the intuition, we define the Martin boundaries and harmonic measures below.

		We now recall the notion of the Martin boundary $\Mb\Gamma$. In general, it is a compactification of $\Gamma$ at infinite, on which Borel measures represent all $P$-harmonic functions on the graph. The harmonic measure is defined as the escape distribution on some boundary of a Markov chain from one point in the graph. One can refer to \cite[Section~24]{Woe00} for more details.

		To be precise, recall that the Green function $G\colon\Gamma\times \Gamma\to \R$ of a Markov chain $(\Gamma,P)$ is defined in (\ref{eq:green0}) as the expectation of the total number $N_v\coloneqq\sum_{n=0}^{+\infty} \mathbf{1}_{v}(Z_n)$ of visits to $v$ from $u$.
		Hence, we have a recursive formula of $G$ as follows:
		\begin{equation} \label{eq:green2}
			G(u, v) = \sum_{w\in \Gamma}\hP(u, w)G(w, v) + \bOne_v(u) = \sum_{w\in \Gamma}G(u,w)\hP(w, v) + \bOne_u(v).
		\end{equation}

		The \defn{$P$-Laplacian} $\Delta_P f$ of a function $f\colon\Gamma\to \R$ is defined by
		\begin{equation*}
			\Delta_P f(u) \= - f(u) + \sum_{v\in\Gamma}\hP(u,v)f(v) 
		\end{equation*}
		for each $u\in\Gamma$. A function $f$ is \defn{$P$-harmonic} if $\Delta_P f=0$. By induction, for each harmonic function $f$ and each $n\in\Z_{>0}$, we have
		\begin{equation}\label{eq:hfun}
			\sum_{v\in\Gamma}\hP^{(n)} (u,v)f(v) = f(u).
		\end{equation}
		Hence, by (\ref{eq:green2}), the $P$-Laplacian of the Green function is $\Delta_P G(\cdot, v)=\bOne_{v}$. If there is no confusion about the choice of $P$, we may say that $f$ is harmonic if $f$ is $P$-harmonic.

		Let $F(u,v) \= \bP_u(v\in\{Z_0,Z_1,\dots\})$ be the probability of visiting $v\in\Gamma$ from $u\in\Gamma$. Then it is straightforward to show that for all $u,v\in\Gamma$, $F(u,v)\le 1$ and $G(u,v) = F(u,v)G(v,v)$. This implies that $F$ is a normalization of the Green function $G$ with $F(u,u)=1$. Since each vertex $u\in\Gamma$ is transient, by the definition of $G$, we can calculate that \begin{equation*}
			G(u,u)=\frac{1}{1-\bP_u(\min\{n\in\Z_{>0}:Z_n=w\}<+\infty)}<+\infty.
		\end{equation*}
		Hence, $G(u,v) \le G(v,v)$ is finite for all $u,v\in\Gamma$.

		Now we formulate the definition of the Martin boundary of a Markov chain.
		First, we construct a function $K(\cdot,v)\colon\Gamma\to\R$ for each $v\in\Gamma$ taking value $1$ at $o$, called \emph{the Martin kernel}, by
		\begin{equation}\label{def:K}
			K(u,v) \= \frac{G(u,v)}{G(o, v)} = \frac{F(u,v)}{F(o, v)}, \quad u, v\in \Gamma.
		\end{equation} 
		\begin{definition}\label{def:martinBoundary}
			Let $\mathrm{Map}(\Gamma, \R)$ be the family of $\R$-valued functions on $\Gamma$, equipped with the topology of pointwise convergence. The Martin kernel $K$ defines an embedding $\hK\colon\Gamma\to \mathrm{Map}(\Gamma, \R)$ given by $u\mapsto K(\cdot,u)$. The \emph{Martin boundary} is defined as $\Mb\Gamma\coloneqq \overline{\hK(\Gamma)}\setminus \hK(\Gamma)$.
		\end{definition}
		
		Thus, we can extend $\hK$ to $\Mb\Gamma$ and denote by $K(\cdot,\alpha)$ the function associated to $\alpha\in\Mb\Gamma$. It is harmonic because the $P$-Laplacian $\Delta_P K(\cdot,v)$ eventually becomes zero at each point as $\abs{v}\to+\infty$.
		
		For a more detailed construction, we may provide metrics on $\Mb\Gamma$ as follows. We denote $C_u\=1/G(o, u)$ for each $u\in\Gamma$. Then
		\begin{equation}\label{eq:harmonicLocBound}
			\abs{K(u,v)} \le \frac{G(u,v)}{G(o, u)G(u,v)} = C_u
		\end{equation}
		is bounded as a function of $v\in\Gamma$.
		
		Arbitrarily choose weights $D=\{D_u\}_{u\in\Gamma}$ with $D_u>0$ and $\sum_{u\in \Gamma} D_u=1$. Then we can construct the metric $\rho_D$ on $\mathrm{Map}(\Gamma,\R)$ as follows:
		\begin{equation*}
			\rho_D(f,g) \coloneqq \sum_{w\in\Gamma} D_w\,\frac{\abs{f(w) - g(w) }}{C_w}.
		\end{equation*}

		By (\ref{eq:harmonicLocBound}), it is straightforward to show that $\rho_D$ takes values in the interval $[0,2]$ when defined in the range of $\hK:\Gamma\to \mathrm{Map}(\Gamma, \R)$.
		
		The pullback $\hK^*\rho_D$ of $\rho_D$ by $\hK$ defines a metric completion $\overline{\Gamma}$ of $\Gamma$. It is a known result in general topology that $\rho_D$ is compatible with the topology of pointwise convergence. Hence, $\Mb\Gamma=\overline{\hK(\Gamma)}\setminus \hK(\Gamma)$ is a metric realization of the Martin boundary.

		An important characterization of the Martin boundary is the following Martin representation formula. See, for example, \cite[Section~24]{Woe00}. That is, for every positive harmonic function $h$ on $\Gamma$, there is a positive Borel measure $\nu^h$ on $\Mb\Gamma$ such that
		\begin{equation}\label{eq:MartinRep}
			h(u) = \int_{\Mb\Gamma} K(u,\xi) \mathrm{d}\nu^h(\xi).
		\end{equation}

		Since the random walk $Z_n$ is transient, by \cite[Theorem~4]{Dy69}, the random walk $Z_n$ starting from each point $u\in\Gamma$ $\bP_u$-a.s.~converges to a point $Z_{\infty}\in \Mb\Gamma$. The \defn{harmonic measure $\nu^{\Mb\Gamma}_u$ seen from $u\in\Gamma$} on the Martin boundary is then defined as the distribution of $Z_\infty\in\Mb\Gamma$, i.e.,
		\begin{equation} \label{eq:defHarm}
			\nu^{\Mb\Gamma}_u(A) \= \bP_u(Z_{\infty}\in A), \quad \text{for each Borel subset } A\in\cB(\Mb\Gamma).
		\end{equation}
		In particular, for the basepoint $u=o$ of the graph, we denote $\nu^{\Mb\Gamma} \= \nu_o^{\Mb\Gamma}$ and call it the harmonic measure if we do not emphasize the choice of $u$.

		By (\ref{eq:MartinRep}), the harmonic measure is the representing measure of the constant function $\bOne$. Hence, by the change of the basepoint of the Green kernel, for all $u\in\Gamma$ and $\xi\in\Mb\Gamma$, we have 
		\begin{equation*}
			K(u,\xi) \mathrm{d}\nu^{\Mb\Gamma}(\xi) = \mathrm{d}\nu_u^{\Mb\Gamma}(\xi),
		\end{equation*}
		That is,
		\begin{equation}\label{eq:harmonicBaseChange}
			\frac{\mathrm{d}\nu_u^{\Mb\Gamma}}{\mathrm{d}\nu^{\Mb\Gamma}}(\xi) = K(u,\xi).
		\end{equation}
		For a detailed discussion, see for example, \cite{Ka96}.

		In this article, Assumption~(B) in Section~\ref{sct:Introduction} implies $G(u,u)=1$ for each $u\in\Gamma$. Hence, the functions $F$ and $G$ are identical. In the following context, we use both notations $F$ and $G$ for the same function.

	\subsection{Tile graphs and visual metrics} \label{subsct:visualMetric}
		In this subsection, we review the notion of visual metrics for dynamical systems. It was introduced in the study of expanding Thurston maps by M.~Bonk and D.~Meyer in \cite{BM17}. Independently, P.~Haïssinsky and K.~M.~Pilgrim also introduced a similar notion to the theory of coarse expanding conformal dynamics in \cite{HP09}.

		Let $(X,f)$ be a dynamical system that satisfies the Assumptions in Subsection~\ref{subsect:expandingDyn}. Consider a finite Markov partition $\alpha=\{A_0,\dots, A_N\}$ of $X$. We can define a graph $\Gamma=\Gamma(f,\alpha)$, which is called the \defn{tile graph}. We give an explicit definition of $\Gamma$ below. 

		Let $\Gamma$ consists of a $0$-word $\emptyset$ and all $n$-words $u=u_1\dots u_n$, $n\in\Z_{>0}$, with $u_1,\dots, u_n\in\{0,1,\dots, N\}$, such that for each $i\in\{1,\dots, n-1\}$, $A_{u_{i+1}}\subseteq f(A_{u_{i}})$. Equivalently, the vertex set $\Gamma$ can be defined as
		\begin{equation*}
			\begin{aligned}
				\Gamma \= \{\emptyset\} \cup  \{u=u_1\dots u_n :  n &\in\Z_{>0}, \, u_1,\dots,u_n\in\{0,1,\dots,N\}, \, \\
				&\forall i\in\{1,\dots, n-1\}, \, A_{u_{i+1}}\subseteq f(A_{u_{i}})\} .
			\end{aligned}
		\end{equation*}
		Each word $u=u_1\dots u_n\in\Gamma$ is associated with a subset $A_u$ of $X$ given by
		\begin{equation*}
			A_u \= A_{u_1}\cap f^{-1} A_{u_2}\cap \cdots \cap f^{-n}A_{u_n}.
		\end{equation*}
		In addition, we put $A_{\emptyset} \= X$. The edge set of $\Gamma$ is defined by
		\begin{equation*}
			E \= \{\{u,v\}:u,v\in\Gamma,\abs{\abs{u}-\abs{v}}\le 1,A_u\cap A_v\neq\emptyset\}.
		\end{equation*}
		Denote $\abs{u}$ as the length of a word $u$. We also call it the \defn{level} of $u$. The tile graph admits a \emph{basepoint} $o\=\emptyset$. It is the unique vertex of level $0$.

		The tile graph $(\Gamma,E)$ admits a \defn{shift map} $\sigma$ on it, which is defined by 
		\begin{equation}   \label{def:sigma}
			A_{\sigma u} = f(A_u)
		\end{equation}
		for each $u\in\Gamma$ with $\abs{u}\ge 2$, and $\sigma u\=o$ for each $u\in\Gamma$ with $\abs{u}\le 1$. 

		Let $\tau\:\Gamma\to\Gamma$ be defined by $\tau\: u_1\dots u_n\mapsto u_1\dots u_{n-1}$ and $\emptyset \mapsto \emptyset$.
		It is the unique map such that for every $u\in\Gamma$ with $u\neq\emptyset$, $A_u\subseteq A_{\tau u}$ and $\abs{\tau u}=\abs{u}-1$. 

		As a metric space, $\Gamma$ is equipped with a combinatorial distance $d$. We now review some results about the hyperbolicity of the tile graph as a metric space $(\Gamma, d)$ and the definition of visual metrics. One can refer to \cite{HP09} and \cite{BM17} for details.

		Consider $(\Gamma, d)$ as a metric space. The \emph{Gromov product} of $u,v\in\Gamma$ with respect to $w\in \Gamma$ is defined to be
		\begin{equation}\label{def:Gprod}
			\Gprod{u,v}{w} \=\frac{1}{2}(d(u,w) + d(w,v) - d(u,v)).
		\end{equation}
		Let $\delta\ge 0$ be a constant. A metric space $\Gamma$ is said to be \emph{Gromov hyperbolic} or \emph{hyperbolic} if 
		\begin{equation}\label{eq:4ptHyp}
			\Gprod{x,y}{w} \ge \min\{\Gprod{x,z}{w}, \Gprod{z,y}{w}\} - \delta,
		\end{equation}
		for all $x,y,z,w\in \Gamma$.

		Let $(\Gamma, d)$ be a hyperbolic metric space. Fix $x\in \Gamma$. A sequence $\{x_n\}_{n\in\Z_{>0}}$ in $X$ converges at infinity if $\{\Gprod{x_i,x_j}{x}\}\to+\infty$. The \defn{Gromov boundary} $\partial\Gamma$ is the set of sequences $\{x_n\}_{n\in\Z_{>0}}$ converging at infinity modulo the equivalence relation defined by: $\{x_n\}_{n\in\Z_{>0}}\sim\{y_n\}_{n\in\Z_{>0}}$ if $\{\Gprod{x_n,y_n}{x}\}\to+\infty$. The Gromov product can be extended to the Gromov boundary in such a way that (\ref{eq:4ptHyp}) holds for all $x,y,z\in\Gamma\cup \partial \Gamma$ and $w\in\Gamma$.

		Fix $a>0$ and $x\in\Gamma$. A metric $\rho$ on $\partial \Gamma$ is said to be an \defn{$a$-visual metric} if
		\begin{equation}
			\rho(\eta,\zeta) \asymp e^{-a\Gprod{\eta,\zeta}{x}},
		\end{equation}
		for all $\eta,\zeta\in \partial \Gamma$. Here, the notation $f\asymp g$ means that there exists a constant $C=C(\asymp)>1$ independent of $\eta,\zeta$ such that $C^{-1}f\le g\le Cf$.

		\begin{rem}
			In fact, these definitions are independent of the choice of $x\in\Gamma$. For sufficiently small $a>0$, there always exists an $a$-visual metric. If for some $\delta>0$, there exists $w\in\Gamma$ such that the inequality (\ref{eq:4ptHyp}) holds, then $\Gamma$ is $2\delta$-hyperbolic for all $x,y,z,w\in\Gamma$. Hence, to verify the hyperbolicity, it suffices to verify (\ref{eq:4ptHyp}) for $w=o$. See for example, \cite[Section III.H]{BH99} for details of Gromov hyperbolic metric spaces and visual metrics. 
		\end{rem}

		The following theorem shows the hyperbolicity of the tile graph $\Gamma$. Such a result was proved by M.~Bonk and D.~Meyer for expanding Thurston maps in \cite[Theorems~10.1 and 10.2]{BM17}, and for coarse expanding dynamical systems, it was proved by P.~Haïssinsky and K.~M.~Pilgrim in \cite[Theorem~3.2.1 and Proposition~3.3.9]{HP09}. For the convenience of the reader, we give a proof in Proposition~\ref{prop:hyp}.
		\begin{theorem}\label{thm:hyp}
			Let $(X,f)$ be a dynamical system satisfying the Assumptions in Subsection~\ref{subsect:expandingDyn} with a Markov partition $\alpha$ such that $\mesh\alpha<\xi$. Then the tile graph $\Gamma$ is Gromov hyperbolic, and the Gromov boundary $\partial \Gamma$ of $\Gamma$ is naturally homeomorphic to $X$.
		\end{theorem}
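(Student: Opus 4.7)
\medskip

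\noindent\textbf{Proof proposal.} The strategy is to first control the combinatorial distance $d$ on $\Gamma$ in terms of how far one has to go ``up'' through the Markov structure to find common ancestors of two vertices, and then to read off both hyperbolicity and the boundary identification from this control.

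\medskip

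The basic combinatorial input is the map $\tau$: for any $u\in\Gamma$, the path $u,\tau u, \tau^2 u,\dots, \tau^{|u|}u=o$ is a geodesic, so $d(u,o)=|u|$. Given $u,v\in\Gamma$, define
\begin{equation*}
    k(u,v) \= \max\bigl\{\,k\in\{0,1,\dots,\min(|u|,|v|)\} : A_{\tau^{|u|-k}u}\cap A_{\tau^{|v|-k}v}\neq\emptyset\,\bigr\}.
\end{equation*}
First I would establish the quasi-isometric formula
\begin{equation*}
    d(u,v) = |u|+|v|-2k(u,v) + O(1).
\end{equation*}
The $\le$ direction is immediate: ascend from $u$ to its level-$k(u,v)$ ancestor, take at most one horizontal edge, then descend to $v$. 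For the $\ge$ direction, I would track a geodesic $u=w_0,w_1,\dots,w_N=v$ and observe that because consecutive tiles $A_{w_i}$ intersect and their levels differ by at most $1$, the corresponding ancestors stay close; the key ingredient is that since $\mesh\alpha<\xi$ and $f$ is distance-expanding, the diameter of a level-$n$ tile decays like $\lambda^{-n}$, and thus chains of intersecting tiles cannot wander too far in $\tau$-direction. From this estimate, the Gromov product satisfies $\Gprod{u,v}{o}=k(u,v)+O(1)$.

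\medskip

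The hyperbolicity inequality (\ref{eq:4ptHyp}) at $w=o$ then reduces to the ultrametric-like claim
\begin{equation*}
    k(u,v) \ge \min\bigl(k(u,z),k(v,z)\bigr) - C
\end{equation*}
for a constant $C$ independent of $u,v,z$. I would prove this by noting that if $m=\min(k(u,z),k(v,z))$, then the level-$m$ ancestors of $u$ and $z$ share a point, and so do those of $v$ and $z$, so the ancestors of $u$ and $v$ at level $m$ lie in the $(2\mesh\alpha)$-neighborhood of each other; the bounded-overlap property of the Markov partition (each tile meets only boundedly many others, using openness of $f$ and compactness of $X$) then forces $k(u,v)\ge m-C$. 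As noted after the definition of visual metric, checking (\ref{eq:4ptHyp}) for $w=o$ suffices.

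\medskip

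For the identification $\partial\Gamma\cong X$, I would define $\Psi\:\partial\Gamma\to X$ as follows. If $\xi\in\partial\Gamma$ is represented by $\{u_n\}$ converging at infinity, then $k(u_i,u_j)\to+\infty$, and since $\diam A_{u_n}\lesssim\lambda^{-|u_n|}\to 0$ and the tiles $A_{u_i},A_{u_j}$ are within $O(\lambda^{-\min(|u_i|,|u_j|)\wedge k(u_i,u_j)})$ of each other, any choice of $x_n\in A_{u_n}$ is Cauchy in $X$; set $\Psi(\xi)$ to be its limit. This is independent of the representative and continuous. Surjectivity: for $x\in X$ and each $n$, pick $u_n\in\Gamma$ with $|u_n|=n$ and $x\in A_{u_n}$ (possible because $\alpha$ covers $X$ and the Markov property propagates to all levels); one checks $k(u_m,u_n)\to\min(m,n)$, so $\{u_n\}$ represents some $\xi$ with $\Psi(\xi)=x$. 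Injectivity: if $\Psi([\{u_n\}])=\Psi([\{v_n\}])$, the tiles $A_{u_n}$ and $A_{v_n}$ both contain points converging to the same $x$, so bounded overlap forces $k(u_n,v_n)\to+\infty$. Continuity of $\Psi^{-1}$ comes from the same intersection/overlap control, and since $\partial\Gamma$ is compact and $X$ is Hausdorff, $\Psi$ is a homeomorphism.

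\medskip

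\noindent\textbf{Main obstacle.} The delicate step is the $\ge$ direction of the distance formula together with the bounded-overlap argument for the ultrametric inequality. These are combinatorial consequences of openness and expansion, but one has to handle the fact that Markov tiles meet on their boundaries (so $A_u\cap A_v\neq\emptyset$ is a weak condition); the cleanest route is to keep careful track of $\mesh\alpha$ relative to the expansion constant $\xi$ and to exploit that $f$, being a local homeomorphism on balls of radius $\xi$, sends the ancestor structure isomorphically (which is the content of Lemma~\ref{lem:locIsom} that the paper invokes elsewhere).
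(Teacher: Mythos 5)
Your route is the combinatorial one of Bonk--Meyer: prove $d(u,v)=\abs{u}+\abs{v}-2k(u,v)+O(1)$, deduce $\Gprod{u,v}{o}=k(u,v)+O(1)$, get hyperbolicity from an ultrametric inequality for $k$, and identify the boundary via shrinking tiles. This is genuinely different from the paper's argument, which never establishes a combinatorial distance formula: it instead builds a Floyd-type metric $\trho$ on $\Gamma$ from weighted path lengths, shows $f$ acts as an exact $e^{a}$-similarity for $\trho$ (Proposition~\ref{prop:visual}), proves $\diam_{\trho}(A_u\cup A_v)\asymp e^{-a\Gprod{u,v}{o}}$ (Lemma~\ref{lem:diamGprod}), and reads off hyperbolicity from the triangle inequality for $\diam_{\trho}$; the boundary identification is Proposition~\ref{prop:boundaryCoincide}. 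Your last paragraph (the map $\Psi$) is essentially that proposition and is fine.

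The gap is in the metric input you need for the two combinatorial steps. Both the $\ge$ direction of your distance formula and your ultrametric inequality reduce to a uniform relative separation statement: there is $c>0$ such that any two level-$m$ tiles with $A_U\cap A_V=\emptyset$ satisfy $\rho(A_U,A_V)\ge c\lambda^{-m}$, so that two non-intersecting ancestors cannot sit within $O(\lambda^{-h})$ of each other once $h\ge m+C$. You attribute this to ``bounded overlap,'' but finite valence does not imply it, and under the Assumptions in Subsection~\ref{subsect:expandingDyn} it is not available for the given metric $\rho$: distance-expansion yields only the \emph{upper} bound $\diam A_u\lesssim\lambda^{-\abs{u}}$ (Lemma~\ref{lem:expandIter}), and since $f$ is merely continuous (not Lipschitz) there is no matching lower bound on diameters or on gaps between disjoint tiles; the self-similarity trick of pushing two close disjoint tiles down to a bounded level by $\sigma$ fails because it requires an upper bound on how much $f$ expands. (Invoking Lemma~\ref{lem:locIsom} here would also be circular, since it rests on Propositions~\ref{prop:visualMetric} and~\ref{prop:visualLocal}, which are proved using Theorem~\ref{thm:hyp}.) This is precisely what the paper's change of metric manufactures: for $\trho$ the tiles are uniformly quasi-round (Proposition~\ref{prop:FloydVisualMetric}) and each flower contains a ball of definite relative radius (Lemma~\ref{lem:inFlower}), which is the separation you need --- note that even then one only gets that the two ancestors are within one horizontal edge of each other, not that they intersect, so $k(u,v)$ should be defined via flowers rather than literal intersection of ancestors. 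Your argument would go through after replacing $\rho$ by $\trho$, but constructing $\trho$ and these properties is the actual content of the paper's proof.
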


		By \emph{naturally homeomorphic} we mean that the homeomorphism $\Psi$ from the Gromov boundary of $\Gamma$ to $X$ satisfies the following property: for every sequence of vertices $\{u_n\}_{n\in\Z_{>0}}$ in $\Gamma$ converging to $\xi\in\partial\Gamma$ in the Gromov boundary of $\Gamma$, the corresponding sequence of subsets $\{A_{u_n}\}_{n\in\Z_{>0}}$ converges in the sense of Gromov--Hausdorff convergence to a singleton $\{\Psi(\xi)\}\subseteq X$.

		In the following context, we usually identify the Gromov boundary $\partial \Gamma$ of the tile graph $\Gamma$ with the phase space $X$ by the homeomorphism $\Psi$ in Theorem~\ref{thm:hyp}. Hence, we call a metric $\rho$ on $X$ an $a$-visual metric if $\Psi^*\rho$ is an $a$-visual metric on $\partial \Gamma$. The following proposition follows the idea of \cite[Lemma~8.11]{BM17} and \cite[Proposition~3.3.2]{HP09}. For the convenience of the reader, we give a proof of it in Corollary~\ref{cor:visualMetric}.

		\begin{prop}\label{prop:visualMetric}
			Let $(X,f)$, $\alpha$, $\Gamma$ satisfies the assumptions in Theorem~\ref{thm:hyp}. There exists a constant $a_0>0$ such that the following statement holds. Let $\rho$ be an $a$-visual metric on $X$ for some constant $0<a<a_0$.
			Then there is a constant $C_0 > 1$ such that, for each $u \in \Gamma$, there is a point $x \in A_u$ such that
			\begin{equation*}
				B_\rho\bigl(x, C_0^{-1} e^{-a\abs{u}}\bigr) \subseteq A_u \subseteq B_\rho\bigl(x, C_0 e^{-a\abs{u}}\bigr),
			\end{equation*}
			and for all $u,v\in \Gamma$,
			\begin{equation*}
				C_0^{-1} e^{-a\Gprod{u,v}{o}} \le \diam_{\rho} (A_u\cup A_v) \le C_0 e^{-a\Gprod{u,v}{o}}. 
			\end{equation*}
		\end{prop}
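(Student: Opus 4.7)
The plan is to reduce both inequalities to estimates on Gromov products in $\Gamma$ and then invoke the defining property $\rho(\xi,\eta)\asymp e^{-a\Gprod{\xi,\eta}{o}}$ of the $a$-visual metric via the homeomorphism $\Psi\:\partial\Gamma\to X$ of Theorem~\ref{thm:hyp}. Under $\Psi$, a boundary point $\xi\in\partial\Gamma$ corresponds to $\bigcap_n A_{v_n}$ for any representing sequence $\{v_n\}_{n\in\Z_{>0}}$, so $\Psi^{-1}(A_u)$ is exactly the set of boundary points admitting a representative $\{v_n\}$ with $v_n=uw_n$, i.e., those lying in the ``descendant shadow'' of $u$ in the partition tree. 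Throughout, I will repeatedly use that the prefix chain $o, u_1, u_1u_2, \dots, u$ is a geodesic in $\Gamma$, so $d(o,u)=\abs{u}$.

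\emph{Upper bounds.} Given $\xi,\eta\in A_u$, I choose representing sequences $\{v_n^\xi\}$, $\{v_n^\eta\}$ both extending $u$; a direct computation with the combinatorial distance gives $\Gprod{v_n^\xi,v_n^\eta}{o}\ge\abs{u}$, hence $\Gprod{\xi,\eta}{o}\ge\abs{u}$ and so $\rho(\xi,\eta)\lesssim e^{-a\abs{u}}$. For $\xi\in A_u$ and $\eta\in A_v$, two applications of four-point hyperbolicity~\eqref{eq:4ptHyp} combined with $\Gprod{\xi,u}{o}\ge\abs{u}\ge\Gprod{u,v}{o}$ and $\Gprod{\eta,v}{o}\ge\abs{v}\ge\Gprod{u,v}{o}$ yield $\Gprod{\xi,\eta}{o}\ge\Gprod{u,v}{o}-C_1$ for a uniform $C_1$, giving the upper half of the second inequality.

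\emph{Good centers and lower bounds.} For each $u\in\Gamma$ I need a ``good center'' $x_u\in A_u$ in which a ball of visual radius $\asymp e^{-a\abs{u}}$ fits. Using openness of $f$ and the finiteness of $\alpha$, for each partition element $A_i$ I will fix a descendant vertex $y_i\in\Gamma$ of uniformly bounded extra level with $A_{y_i}\subseteq\inter A_i$, chosen so that $A_{y_i}$ is separated from every other level-$\abs{y_i}$ tile by at least one full intermediate partition element. For a general $u=u_1\dots u_n$, the shift invariance $\sigma A_u=A_{\sigma u}$ together with Lemma~\ref{lem:locIsom} allow me to pull this construction back through $\sigma^{n-1}$ to produce a descendant $u'=uw$ of $u$, with $\abs{u'}-\abs{u}$ bounded uniformly in $u$, such that $A_{u'}$ is combinatorially insulated from $\partial A_u$. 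Taking $x_u$ to be any boundary point in $A_{u'}$, I then argue that for $\eta\in X\setminus A_u$ any representing sequence must eventually leave the descendants of $u$; combined with the insulation property and hyperbolicity this forces $\Gprod{\eta,x_u}{o}\le\abs{u}+C_2$, so $\rho(x_u,\eta)\gtrsim e^{-a\abs{u}}$ and $B_\rho(x_u,C_0^{-1}e^{-a\abs{u}})\subseteq A_u$. For distinct $u,v$, the same analysis bounds $\Gprod{x_u,x_v}{o}$ from above by $\Gprod{u,v}{o}+C_3$, providing the lower half of the second inequality.

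The main obstacle is the construction of the insulated descendants $y_i$ and their uniform pullback through $\sigma$. Openness of $f$ guarantees that each $A_i$ has nonempty interior, and finiteness of $\alpha$ makes the choice uniform; the delicate point is verifying that ``combinatorial insulation at scale $\abs{u}$'' translates into a visual-metric lower bound of order $e^{-a\abs{u}}$, for which I need careful bookkeeping of the multiplicative constants appearing in $\rho\asymp e^{-a\Gprod{\cdot,\cdot}{o}}$ and in the hyperbolicity inequality. The smallness of $a<a_0$ enters precisely here, to absorb these constants into the final $C_0$.
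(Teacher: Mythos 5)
Your upper bounds are sound: for $\xi\in A_u$, $\eta\in A_v$ the prefix geodesics give $\Gprod{\xi,u}{o}\ge\abs{u}\ge\Gprod{u,v}{o}$ and likewise for $v$, so two applications of (\ref{eq:4ptHyp}) yield $\Gprod{\xi,\eta}{o}\ge\Gprod{u,v}{o}-2\delta$; this is essentially the easy half of the paper's Lemma~\ref{lem:diamGprod}. The gap is in the lower bounds, and what you call ``careful bookkeeping of multiplicative constants'' is in fact a missing quantitative lemma. Lemma~\ref{lem:locIsom} and the identity $\sigma A_u=A_{\sigma u}$ are purely combinatorial: pulling your insulated descendant $y_i$ back through $\sigma^{n-1}$ reproduces its combinatorial relation to neighbouring tiles, but gives no control on the $\rho$-size of the gap between $A_{u'}$ and $X\setminus A_u$. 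To convert combinatorial insulation at level $\abs{u'}$ into $\rho(x_u,X\setminus A_u)\gtrsim e^{-a\abs{u}}$ uniformly in $u$, you need to know that the inverse branch of $f^{\abs{u}-1}$ carrying the base-level configuration onto $A_u$ contracts the visual metric by a factor $\asymp e^{-a(\abs{u}-1)}$; that is Proposition~\ref{prop:visualLocal}, a sibling of the statement you are proving, which the paper only obtains after introducing the auxiliary Floyd metric $\trho$ in which $\sigma$ is an \emph{exact} local $e^a$-similarity (Proposition~\ref{prop:visual}) and proving Proposition~\ref{prop:FloydVisualMetric}, Lemma~\ref{lem:inFlower}, and Lemma~\ref{lem:diamGprod} for $\trho$ before transferring to $\rho$. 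Without some such input the argument is circular: the metric thickness of the insulating layer of level-$\abs{u'}$ tiles is exactly the tile-roundness estimate in question.

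Even granting that input, the deduction ``insulation plus hyperbolicity forces $\Gprod{\eta,x_u}{o}\le\abs{u}+C_2$ for $\eta\notin A_u$'' is asserted rather than proved, and the mechanism you name (a representing sequence of $\eta$ eventually leaves the descendants of $u$) does not yield it: exiting the descendant subtree at some level places no lower bound on $d(w_m,z_m)$, hence no upper bound on the Gromov product, because a short geodesic between deep tiles never needs to descend to level $\abs{u}$. The workable version is the contrapositive via a shadow lemma: $\Gprod{x_u,\eta}{o}>\abs{u'}+C$ together with $\Gprod{x_u,u'}{o}\ge\abs{u'}$ forces $\eta$ to lie in a level-$\abs{u'}$ tile within combinatorial distance $O(\delta)$ of $u'$, so the insulation must consist of $O(\delta)$ combinatorial layers rather than the single ``intermediate partition element'' you propose, and the smallness of $a$ plays no role in this step. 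Neither the shadow lemma nor the correct insulation depth appears in your sketch; I would either import the $\trho$-machinery of the appendix or first prove a uniform flower estimate in the spirit of Lemma~\ref{lem:inFlower} before attempting the lower bounds.
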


		The following proposition follows the idea of \cite[Proposition~3.2.3]{HP09}. For the convenience of the reader, we provide a proof of it in Corollary~\ref{cor:visual}.
		\begin{prop} \label{prop:visualLocal}
			Let $(X,f)$, $\alpha$, $\Gamma$ satisfies the assumptions in Theorem~\ref{thm:hyp}. There exists a constant $a_0>0$ such that the following statement holds. Let $\rho$ be an $a$-visual metric on $X$ for some $0<a<a_0$. Then there is a constant $\xi>0$ such that for all $x,y\in X$ and $n\in\Z_{>0}$ with $\rho  ( f^m x,f^m y )<\xi$ for each integer $0\le m<n$, we have
			\begin{equation*}
				\rho(f^n x, f^n y) \asymp e^{an}\rho(x,y).
			\end{equation*}
			Moreover, for each $x\in X$, $f|_{B_{\rho}(x,\xi)}$ is a homeomorphism to its image.
		\end{prop}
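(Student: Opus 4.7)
The strategy is to reformulate the expansion estimate as a statement about how the shift $\sigma$ on the tile graph $\Gamma$ changes the ``branching level'' of a pair of points in $X$, and then to pass back to the visual metric $\rho$ via Proposition~\ref{prop:visualMetric}. First I would reduce to the single-step case $n=1$: once I establish that $\rho(x,y) < \xi$ implies $\rho(fx,fy) \asymp e^{a} \rho(x,y)$ with uniform constants, iterating under the standing hypothesis $\rho(f^m x, f^m y) < \xi$ for $0 \le m < n$ immediately yields the general estimate.

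For the one-step case, I would characterize $\rho(x,y)$ via the largest integer $k = k(x,y)$ for which there exist $u, v \in \Gamma$ with $|u| = |v| = k$, $x \in A_u$, $y \in A_v$, and $d(u,v) \le 1$. Proposition~\ref{prop:visualMetric} then gives $\rho(x,y) \asymp e^{-ak}$ whenever $\rho(x,y)$ is sufficiently small: the upper bound follows from $\diam_\rho(A_u \cup A_v) \asymp e^{-a\Gprod{u,v}{o}}$ combined with $\Gprod{u,v}{o} = k$ up to bounded error, while the lower bound uses that no adjacent pair of tiles at level $k+1$ contains both $x$ and $y$, so that another application of Proposition~\ref{prop:visualMetric} separates them at the scale $e^{-ak}$.

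The core of the proof is to show that $\sigma$ decreases $k(x,y)$ by exactly one, up to bounded error. For the upper bound $\rho(fx,fy) \lesssim e^{a}\rho(x,y)$: if $u,v$ witness $k(x,y) = k$, then $fx \in A_{\sigma u}$, $fy \in A_{\sigma v}$ at level $k-1$, and $A_{\sigma u} \cap A_{\sigma v} \supseteq f(A_u \cap A_v) \ne \emptyset$, so $d(\sigma u, \sigma v) \le 1$ and $k(fx, fy) \ge k - 1$. For the matching lower bound, any pair $u', v'$ witnessing $k(fx, fy) = k'$ is lifted via Lemma~\ref{lem:locIsom} to $u'', v''$ at level $k' + 1$ with $x \in A_{u''}$, $y \in A_{v''}$, and $d(u'', v'') \le 1$; maximality of $k(x,y)$ then forces $k' + 1 \le k$, yielding $\rho(fx, fy) \gtrsim e^{a}\rho(x,y)$.

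The ``moreover'' assertion then follows from the one-step expansion applied on a slightly larger scale: if $\xi$ is shrunk by a factor of $2$ at the outset, then $f(y_1) = f(y_2)$ with $y_1, y_2 \in B_\rho(x, \xi)$ gives $\rho(y_1, y_2) < 2\xi$, and the expansion forces $\rho(y_1, y_2) = 0$. The main technical obstacle is the lifting in the lower bound above: among the preimages of $A_{u'}$ and $A_{v'}$ under $f$, I must single out the specific ones containing $x$ and $y$ and then verify that these chosen preimages are adjacent in $\Gamma$, not merely that some pair of preimages is. The smallness hypothesis $\rho(x,y) < \xi$ is precisely what confines $x$ and $y$ to a single ``branch'' of $f^{-1}$, so that Lemma~\ref{lem:locIsom}, which provides local graph isomorphisms of $\sigma$ only away from the root, yields a consistent simultaneous lift of the two tiles.
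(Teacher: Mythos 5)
Your overall strategy (a combinatorial ``branching level'' $k(x,y)$ that drops by one under $f$) is close in spirit to the paper's, but as written the proposal has a gap at the single most delicate point: uniformity of the constants in $n$. You reduce to $n=1$ by claiming that once $\rho(fx,fy)\asymp e^{a}\rho(x,y)$ is known, ``iterating\dots immediately yields the general estimate.'' It does not: iterating a two-sided bound with multiplicative constant $C$ produces $C^{n}$ after $n$ steps, while the statement requires the implied constants in $\rho(f^nx,f^ny)\asymp e^{an}\rho(x,y)$ to be independent of $n$ (this is the whole content of the proposition). The paper circumvents this by first proving an \emph{exact} one-step identity $\trho(fx,fy)=e^{a}\trho(x,y)$ for the auxiliary Floyd-type metric $\trho$ of the appendix (Proposition~\ref{prop:visual}), iterating that losslessly, and converting to $\rho$ via $\rho\asymp\trho$ only once at each end. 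Your integer $k(x,y)$ could serve the same purpose, but only if you prove the exact identity $k(fx,fy)=k(x,y)-1$ and run the induction on $k$ rather than on $\rho$; your own phrasing (``decreases $k(x,y)$ by exactly one, up to bounded error'') hedges precisely where no error can be tolerated, since a bounded per-step error again accumulates linearly in $n$.

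There are two further problems. First, the lifting step cannot invoke Lemma~\ref{lem:locIsom}: the paper's proof of that lemma begins by citing Proposition~\ref{prop:visualLocal}, so your argument is circular (and Lemma~\ref{lem:locIsom} concerns the random-walk shadows $\mho(u)$, which play no role in this purely metric statement). What you actually need is that $f$ is injective on a set containing $A_{u''}\cup A_{v''}$, so that the intersection point of $A_{u'}\cap A_{v'}$ lifts into $A_{u''}\cap A_{v''}$; this must be derived directly from Assumption~(iii) of Subsection~\ref{subsect:expandingDyn} together with the smallness of high-level tiles, treating the finitely many low levels separately. Second, the lower bound $\rho(x,y)\gtrsim e^{-ak(x,y)}$ does not follow from Proposition~\ref{prop:visualMetric} alone: knowing that every pair of level-$(k+1)$ tiles containing $x$ and $y$ respectively is disjoint bounds $\diam_\rho(A_{u'}\cup A_{v'})$ from below, not $\rho(x,y)$, since two disjoint tiles can contain points much closer together than their joint diameter. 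One needs a ``flower'' estimate --- every ball $B_\rho\bigl(x,r_0e^{-am}\bigr)$ is contained in the union of the level-$m$ tiles meeting a tile containing $x$ --- which the paper proves separately (Lemma~\ref{lem:inFlower}) and which requires its own argument.
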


		In this article, we always assume that $X$ is equipped with an $a$-visual metric $\rho$ for some sufficiently small $a>0$. The purpose of introducing the concept of visual metrics is that we want the tile graph to be Gromov hyperbolic,
		and $\rho$ is related to the tile graph $\Gamma$.

\section{Random walks on tile graphs}\label{sct:randomWalk}

	In this and all the following sections, we assume that the dynamical system $(X,f)$ satisfies the Assumptions in Subsection~\ref{subsect:expandingDyn}. Let $\Gamma$ be the tile graph associated with $f$ and a fixed Markov partition $\alpha$ with $\mesh\alpha<\xi$ so that Theorem~\ref{thm:hyp} can be applied. The tile graph $\Gamma$ is equipped with maps $\sigma$ and $\tau$ defined in the beginning of Subsection~\ref{subsct:visualMetric}. By Proposition~\ref{prop:visual}, we can further assume that the metric $\rho$ on $X$ is an $a$-visual metric for some $0<a<a_0$, where $a_0$ is a constant such that Propositions~\ref{prop:visualMetric} and~\ref{prop:visualLocal} hold. We focus on some basic properties of the random walks on the tile graph $\Gamma$ under the Assumptions in Section~\ref{sct:Introduction}.

	We denote the shadow and the neighborhood of $u$ associated with the random walk by
	\begin{align}
		\mho(u) &\= \{v\in\Gamma : F(u,v)>0\} \quad \text{and} \label{def:mho}\\
		N(u) &\= \{v\in\Gamma : \mho(u)\cap \mho(v)\neq\emptyset, \abs{\abs{u}-\abs{v}}\le R \},\label{def:N}
	\end{align}
	respectively. Here the constant $R>0$ is taken from Assumption~(A) in Section~\ref{sct:Introduction}. For each subset $S\subseteq \Gamma$, we put
	\begin{equation} \label{def:A}
		A_S\= \bigcup_{u\in S} A_u \subseteq X.
	\end{equation}
	Recall that in Subsection~\ref{subsect:expandingDyn}, we denote the ball in $X$ centered at $x\in X$ with radius $r>0$ as $B(x,r)$. We also denote the $r$-neighborhood of $A\subseteq X$ as $B(A,r)$.

	We first show the openness of the shadow of each vertex on the boundary.
	
	\begin{lemma}\label{lem:open}
		For each $v\in \Gamma$, the subset $A_{\mho(v)}$ of $X$ is open.
	\end{lemma}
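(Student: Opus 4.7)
The plan is to prove openness pointwise. Given $x \in A_{\mho(v)}$, pick $u \in \mho(v)$ with $x \in A_u$. I will construct an explicit ball $B(x, r) \subseteq A_{\mho(v)}$ by passing once to level $\abs{u}+1$ and invoking Assumption~(C).

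The key step, and the whole reason for going one level deeper rather than staying at level $\abs{u}$, is the following observation: every vertex $w \in \Gamma$ with $\abs{w} = \abs{u}+1$ and $x \in A_w$ lies in $\mho(v)$. Indeed, for such a $w$, we have $A_u \cap A_w \ni x \ne \emptyset$ and $\abs{w} = \abs{u}+1$, so $\{u,w\}$ is an edge of $\Gamma$ and $d(u,w) = 1$. By Assumption~(C), $\hP(u, w) > 0$. Combined with $F(v, u) > 0$, the Markov property yields
\begin{equation*}
	F(v, w) \ge F(v, u) \, \hP(u, w) > 0,
\end{equation*}
so $w \in \mho(v)$. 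Note that the analogous statement at level $\abs{u}$ itself can fail: other tiles at level $\abs{u}$ containing $x$ need not belong to $\mho(v)$, which is why I descend to level $\abs{u}+1$.

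The second step is a finiteness argument. Since the Markov partition $\alpha$ is finite, $\Gamma$ has only finitely many vertices at level $\abs{u}+1$. Among them, the tiles $A_{u'}$ with $x \notin A_{u'}$ form a finite collection of closed subsets of $X$ avoiding $x$, and each is therefore at strictly positive distance from $x$. I choose $r > 0$ smaller than all of these finitely many positive distances. Then every level-$(\abs{u}+1)$ tile intersecting $B(x, r)$ is forced to contain $x$, and hence lies in $\mho(v)$ by the first step. Since the level-$(\abs{u}+1)$ tiles cover $X$, every point of $B(x, r)$ lies in some such tile, giving $B(x, r) \subseteq A_{\mho(v)}$.

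I do not foresee any serious obstacle. The argument depends only on Assumption~(C), the finiteness of $\alpha$, and the elementary fact that a point lies at positive distance from any finite union of closed sets avoiding it. Assumptions~(A), (B), and~(D) play no direct role in this lemma beyond establishing the framework in which $\mho(v)$ is defined.
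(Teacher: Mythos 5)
Your proof is correct and follows essentially the same route as the paper's: pass to level $\abs{u}+1$, use Assumption~(C) to place every tile at that level meeting $A_u$ into the shadow, and exploit the positive distance from $x$ to the finitely many remaining closed tiles to produce the ball. The only cosmetic difference is that the paper measures the gap from the whole tile $A_u$ to the disjoint level-$(\abs{u}+1)$ tiles and then uses $\mho(u)\subseteq\mho(v)$, whereas you measure it from the point $x$ and verify $F(v,w)>0$ directly; both are sound.
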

	\begin{proof}
		By (\ref{def:A}) and (\ref{def:mho}), for each $\xi\in A_{\mho(v)}$, there is a vertex $w\in\Gamma$ such that $\xi\in A_w$ and $F(v,w)>0$. Choose
		\begin{equation*}
			\epsilon \= \rho\Bigl( A_w, \bigcup_{\substack{u \in\Gamma,\abs{u}=\abs{w}+1\\ A_u\cap A_w=\emptyset}} A_u \Bigr)>0.
		\end{equation*}
		By Assumption~(C) in Section~\ref{sct:Introduction}, we have
		\begin{equation*}
			\{u\in\Gamma: \abs{u}=\abs{w}+1, A_u\cap A_w\neq\emptyset\} \subseteq \mho(w).
		\end{equation*}
		Hence, by the definition of $\epsilon$,
		\begin{equation*}
			B(A_w, \epsilon)\subseteq \bigcup_{\substack{u\in\Gamma, \abs{u}=\abs{w}+1\\ A_u\cap A_w\neq\emptyset}} A_u\subseteq A_{\mho(w)}.
		\end{equation*}
		It follows that $B(\xi,\epsilon) \subseteq A_{\mho(w)}\subseteq A_{\mho(v)}$. Since the choice of $\xi\in A_{\mho(v)}$ is arbitrary, we finish our proof of the openness of $A_{\mho(v)}$.
	\end{proof}

	Then we prove a lemma about the boundedness of shadows.
	\begin{lemma}\label{lem:shadow1}
		There is a constant $C_1>0$ such that for each $u\in\Gamma$,
		\begin{equation} \label{ieq:shadow1}
			A_{\mho(u)} \subseteq B\bigl(A_u, C_1 e^{-a\abs{u}}\bigr).
		\end{equation}
	\end{lemma}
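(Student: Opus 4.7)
The plan is to unpack the definition of $\mho(u)$ along a positive-probability trajectory of the random walk and then sum geometrically decreasing tile diameters. First, I would observe that if $v \in \mho(u)$, then by the definition of $F$ there exist an integer $n \ge 0$ and vertices $u = y_0, y_1, \ldots, y_n = v$ with $\hP(y_i, y_{i+1}) > 0$ for each $0 \le i < n$. Assumption~(A) in Section~\ref{sct:Introduction} then gives $d(y_i, y_{i+1}) \le R$, while Assumption~(B) forces $\abs{y_{i+1}} > \abs{y_i}$, so by induction $\abs{y_i} \ge \abs{u} + i$.

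Next I would bound $\diam_{\rho}(A_{y_i} \cup A_{y_{i+1}})$ using the second estimate of Proposition~\ref{prop:visualMetric}. Since every vertex $w \in \Gamma$ satisfies $d(o, w) = \abs{w}$ (the chain $o = \tau^{\abs{w}} w, \tau^{\abs{w}-1} w, \ldots, \tau w, w$ has length $\abs{w}$, while the edge condition $\abs{\abs{x}-\abs{y}} \le 1$ rules out any shorter path), we get
\begin{equation*}
  \Gprod{y_i, y_{i+1}}{o} = \tfrac{1}{2}\bigl( \abs{y_i} + \abs{y_{i+1}} - d(y_i, y_{i+1}) \bigr) \ge \abs{u} + i + \tfrac{1-R}{2},
\end{equation*}
and Proposition~\ref{prop:visualMetric} then yields
\begin{equation*}
  \diam_{\rho}(A_{y_i} \cup A_{y_{i+1}}) \le C_0\, e^{a(R-1)/2}\, e^{-a\abs{u}}\, e^{-ai}.
\end{equation*}

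To conclude, for any $\xi \in A_v = A_{y_n}$, iterated application of the triangle inequality along the chain of tiles $A_{y_0}, A_{y_1}, \ldots, A_{y_n}$ gives
\begin{equation*}
  \rho(\xi, A_u) \le \sum_{i=0}^{n-1} \diam_{\rho}(A_{y_i} \cup A_{y_{i+1}}) \le \frac{C_0\, e^{a(R-1)/2}}{1 - e^{-a}}\, e^{-a\abs{u}},
\end{equation*}
so I would set $C_1 \= C_0\, e^{a(R-1)/2}/(1 - e^{-a})$; the degenerate case $v = u$ (with $n = 0$) is immediate. I do not anticipate a serious obstacle here: the argument is essentially bookkeeping once the Gromov-product diameter bound is in hand. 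The only point to track carefully is the geometric decay of the tile diameters along the trajectory, which is guaranteed precisely by Assumption~(B) forcing strict level increase at every step; without it one would only obtain arithmetic decay and the series would fail to sum.
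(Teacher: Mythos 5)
Your proposal is correct and follows essentially the same route as the paper: unwind $\mho(u)$ into a positive-probability chain $u=y_0,\dots,y_n=v$, use Assumptions~(A) and~(B) to bound the Gromov products $\Gprod{y_i,y_{i+1}}{o}$ from below by $\abs{u}+i$ up to an additive constant depending on $R$, apply Proposition~\ref{prop:visualMetric} to each $\diam_\rho(A_{y_i}\cup A_{y_{i+1}})$, and sum the resulting geometric series. The only differences are cosmetic (a slightly sharper constant and the explicit verification that $d(o,w)=\abs{w}$, which the paper uses implicitly).
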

	\begin{proof}
		Note that for each $v\in \mho(u)$, there is a sequence of vertices $v_0,\dots,v_n\in\Gamma$ with $v_0=u$ and $v_n=v$ such that for each $i\in\{1,\dots, n\}$, $\hP(v_{i-1},v_i)>0$. By Assumptions (A) and (B) in Section~\ref{sct:Introduction}, $d(v_i,v_{i-1})<R$, and $\abs{v_{i}}>\abs{v_{i-1}}$. Hence, $\Gprod{v_i, v_{i-1}}{o} \ge \abs{v_i}-R$. By Proposition~\ref{prop:visualMetric}, for each $v\in\mho(u)$,
		\begin{equation*}
			\diam(A_v\cup A_u) 
			\le \sum_{i=1}^n \diam(A_{v_i}\cup A_{v_{i-1}}) 
			 \le \sum_{i=1}^n e^{-a(\abs{v_i}-R)} 
			 \le  \sum_{i=1}^{+\infty} e^{-a(\abs{u}+i-R)}
			= \frac{e^{aR} e^{-a\abs{u}} }{1-e^{-a}}.
		\end{equation*}
		Hence, by setting $C_1\= e^{aR} ( 1-e^{-a})^{-1}$, we finish the proof of the lemma.
	\end{proof}

	\begin{cor}\label{cor:shadow1}
		There is a constant $C_6>0$ such that for each $u\in\Gamma$,
		\begin{equation}
			\diam A_{\mho(u)} < C_6 e^{-a\abs{u}}.
		\end{equation}
	\end{cor}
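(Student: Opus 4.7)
The plan is to derive this directly from Lemma~\ref{lem:shadow1} by combining it with the diameter estimate from Proposition~\ref{prop:visualMetric}. Specifically, taking $v = u$ in the second bound of Proposition~\ref{prop:visualMetric} gives $\diam_\rho A_u \le C_0 e^{-a\langle u, u\rangle_o} = C_0 e^{-a|u|}$, since the Gromov product $\langle u, u\rangle_o$ equals $d(o, u) = |u|$.

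Now, for any two points $x, y \in A_{\mho(u)}$, Lemma~\ref{lem:shadow1} gives $\rho(x, A_u), \rho(y, A_u) < C_1 e^{-a|u|}$, so we can pick points $p, q \in A_u$ with $\rho(x, p), \rho(y, q) < C_1 e^{-a|u|}$. The triangle inequality then yields
\begin{equation*}
\rho(x, y) \le \rho(x, p) + \rho(p, q) + \rho(q, y) < 2C_1 e^{-a|u|} + C_0 e^{-a|u|} = (C_0 + 2C_1)\, e^{-a|u|}.
\end{equation*}
Taking the supremum over $x, y \in A_{\mho(u)}$ and setting $C_6 \= C_0 + 2C_1$ (or any slightly larger constant to ensure the strict inequality for the diameter itself) completes the proof. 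There is no real obstacle here; the corollary is an immediate packaging of the preceding lemma with the standard visual-metric size estimate of a single tile.
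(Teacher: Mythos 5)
Your proof is correct and follows essentially the same route as the paper: both combine the neighborhood estimate of Lemma~\ref{lem:shadow1} with the tile-diameter bound from Proposition~\ref{prop:visualMetric} via the triangle inequality, differing only in the (irrelevant) value of the final constant.
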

	\begin{proof}
		Let $C_0>0$ and $C_1>0$ be the constants from Proposition~\ref{prop:visualMetric} and Lemma~\ref{lem:shadow1}, respectively. By Proposition~\ref{prop:visualMetric}, $\diam A_u< C_0e^{-a\abs{u}}$. Hence, by Lemma~\ref{lem:shadow1} and Proposition~\ref{prop:visualMetric}, $\diam_{A_{\mho(u)}}<2C_1e^{-a\abs{u}}+\diam A_u<(2C_0+2C_1) e^{-a\abs{u}}$. So $C_6\= 2C_0+2C_1$ is what we want.
	\end{proof}

	Roughly speaking, the following lemma shows that $\sigma$ is a local similarity of the graph $\Gamma$ and the random walk is preserved locally by $\sigma$. This is the key property we need from the assumption of $f$ being a local homeomorphism.

	\begin{lemma}\label{lem:locIsom}
		There is a number $N_0\in\Z_{>0}$ such that for each $u\in\Gamma$ with $\abs{u}\ge N_0$,
		\begin{enumerate}[\hspace{2em}(i)]
			\smallskip
			\item $\sigma|_{\mho(u)} \colon \mho(u) \to \mho(\sigma u)$ is an isomorphism between subgraphs of $\Gamma$,
			\smallskip
			\item $\sigma|_{N(u)} \colon N(u) \to N(\sigma u)$ is an isomorphism between subgraphs of $\Gamma$. 
		\end{enumerate}
	\end{lemma}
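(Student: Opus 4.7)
The plan is to reduce the statement to the local homeomorphism behavior of $f$. The underlying geometric observation is that both $A_{\mho(u)}$ and $A_{N(u)}$ have diameters of order $e^{-a\abs{u}}$: for the shadow this is Corollary~\ref{cor:shadow1}, while for the neighborhood it follows by chaining through a shared shadow point, using Lemma~\ref{lem:shadow1}, Proposition~\ref{prop:visualMetric}, and the level bound $\abs{v}\ge \abs{u}-R$ for $v\in N(u)$. Choosing $N_0$ large enough that this diameter falls below $\xi$ whenever $\abs{u}\ge N_0$, and fixing $x\in A_u$, one gets $A_{N(u)}\subseteq B(x,\xi)$, where $f$ restricts to a homeomorphism by Proposition~\ref{prop:visualLocal}.

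For part~(i): well-definedness is direct, because the inequality $\hP(\sigma v_1,\sigma v_2)\ge\hP(v_1,v_2)$ (derived from Assumption~(D), valid once $v_1\neq o$, which is ensured by Assumption~(B) and $\abs{u}\ge N_0\ge 1$) sends a path witnessing $v\in\mho(u)$ to a path witnessing $\sigma v\in\mho(\sigma u)$. Injectivity follows from $A_v,A_w\subseteq B(x,\xi)$ together with injectivity of $f$ there and the Markov structure that forces distinct words with the same tile to coincide. Surjectivity is a forward path-lifting argument: given $v'\in\mho(\sigma u)$ with a witnessing path $\sigma u=y_0\to\cdots\to y_m=v'$, the identity $\hP(y_{i-1},y_i)=\sum_{\sigma w=y_i}\hP(\tilde y_{i-1},w)$ from Assumption~(D) inductively produces lifts $\tilde y_i\in\mho(u)$ with $\sigma \tilde y_i=y_i$, starting from $\tilde y_0=u$; then $v=\tilde y_m$ is the desired preimage. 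Edge preservation uses $A_{\sigma v}=f(A_v)$ together with the equivalence $A_v\cap A_w\neq\emptyset\iff f(A_v)\cap f(A_w)\neq\emptyset$ on $B(x,\xi)$, and the fact that $\sigma$ preserves level differences.

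For part~(ii): injectivity and edge preservation work verbatim as in~(i), while well-definedness reduces to the shadow transfer already used. The substantive step is surjectivity. Given $v'\in N(\sigma u)$, pick $z'\in\mho(\sigma u)\cap\mho(v')$ and let $z\in\mho(u)$ be its unique lift from~(i). Invoke the local homeomorphism: since $A_{v'}$ lies in a small neighborhood of $A_{\sigma u}=f(A_u)\subseteq f(B(x,\xi))$, there is a unique tile $A_v\subseteq B(x,\xi)$ at level $\abs{v'}+1$ with $f(A_v)=A_{v'}$, i.e., $\sigma v=v'$. To confirm $v\in N(u)$, forward-lift a path from $v'$ to $z'$ via Assumption~(D) starting at $v$; the terminal vertex is a preimage of $z'$ inside $B(x,\xi)$, which by injectivity of $f|_{B(x,\xi)}$ must coincide with $z$, giving $z\in\mho(v)\cap\mho(u)$. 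The level bound $\abs{\abs{v}-\abs{u}}\le R$ is automatic from $\abs{v}-\abs{u}=\abs{v'}-\abs{\sigma u}$.

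The main obstacle is exactly this surjectivity step in~(ii). Unlike in~(i), the preimage $v$ of $v'$ is not produced by iterating Assumption~(D) from $u$, since $v$ need not sit on any forward path from $u$. Instead one must extract $v$ from the point-set uniqueness of preimages inside $B(x,\xi)$ afforded by the local homeomorphism of $f$, and then couple this with Assumption~(D) to verify that the terminal vertex of the lifted path really is the prescribed $z$. This interplay between the measure-theoretic data of (D) and the topological data from Proposition~\ref{prop:visualLocal} is where the distance-expanding hypothesis plays its decisive role.
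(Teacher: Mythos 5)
Your proposal is correct and follows essentially the same route as the paper's proof: choose $N_0$ so that the union of the relevant shadows has diameter below $\xi$ (via Corollary~\ref{cor:shadow1}), invoke Proposition~\ref{prop:visualLocal} to make $f$ a homeomorphism there (which preserves tile intersections, hence edges, and gives injectivity), and use Assumption~(D) to match shadows with shadows. The only difference is one of detail rather than substance: you spell out the forward path-lifting that underlies surjectivity and the identification of the lifted endpoint with the prescribed preimage, steps the paper compresses into ``it follows from Assumption~(D) that the image of $\sigma|_{\mho(u)}$ is exactly $\mho(\sigma u)$'' and the equivalence $\mho(w)\cap\mho(v)\neq\emptyset\iff A_{\mho(w)}\cap A_{\mho(v)}\neq\emptyset$.
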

	\begin{proof}
		Let $C_6>0$ and $\xi>0$ be the constants from Corollary~\ref{cor:shadow1} and Proposition~\ref{prop:visualLocal}. We choose $N_0\in\Z_{>0}$ such that
		\begin{equation*}
			10(R+1) C_6 e^{-a(N_0-R)}<\xi.
		\end{equation*}
		Then by Corollary~\ref{cor:shadow1}, for each $u\in\Gamma$ with $\abs{u}\ge N_0$, 
		\begin{equation}\label{eq:shadowLocality}
			\diam A_{\mho(u)} < C_6 e^{-aN_0}<\xi/4.
		\end{equation}
		
		By Proposition~\ref{prop:visualLocal}, $f|_{A_{\mho(u)}}$ is a homeomorphism to its image. It follows from the definition of the vertices and the edges of the tile graph that $\sigma|_{\mho(u)}$ is an isomorphism between subgraphs of $\Gamma$ since whether two subsets intersect is preserved by a homeomorphism. It follows from Assumption~(D) in Section~\ref{sct:Introduction} that the image of $\sigma|_{\mho(u)}$ is exactly $\mho(\sigma u)$.

		For the proof of statement (ii), we fix $u\in\Gamma$ such that $\abs{u}\ge N_0$. For each $v\in \Gamma$ with $d(u,v) < R$, there is a path $u=u_0,u_1,\dots, u_k=v$ in $\Gamma$ connecting $u$ and $v$ for some $k<R$. By Corollary~\ref{cor:shadow1}, since each $u_i$ satisfies $\abs{u_i}\ge N_0-R$, we get 
		\begin{equation*}
			\diam  \bigl( A_{\mho(u)}\cup A_{\mho(v)} \bigr)
			\le \sum_{i=0}^{k-1} \diam \bigl( A_{\mho(u_i)}\cup A_{\mho(u_{i+1})} \bigr) 
			\le \sum_{i=0}^{k-1} 2C_6e^{-a (N_0-R)} 
			<\xi/3.
		\end{equation*}
		It follows that if we put $U_R\=\bigcup_{v\in\Gamma, d(u,v)<R} A_{\mho(v)}$, then
		$\diam U_R < \xi$.
		By Proposition~\ref{prop:visualLocal}, $f|_{U_R}$ is a homeomorphism to its image. It is easy to show that for all $v,w\in \Gamma$, $\mho(w)\cap \mho(v)\neq\emptyset$ if and only if $A_\mho(w)\cap A_\mho(v)\neq\emptyset$. Hence, for each $v\in \Gamma$ with $d(u,v)<R$, $\mho(u)\cap \mho(v)\neq\emptyset$ if and only $\mho(\sigma u)\cap \mho(\sigma v)\neq\emptyset$. Therefore, it follows immediately that $\sigma|_{N(u)}\: N(u)\to N(\sigma u)$ is an isomorphism between subgraphs of $\Gamma$.
	\end{proof}

	Recall that under the Assumptions in Section~\ref{sct:Introduction}, the Green function $G$ is equal to the function $F$. The following lemma shows that the Green function is nearly multiplicative. It is our key estimate on the Green function.

\begin{lemma}\label{lem:multiplicative}
	Assume that $u,v,s\in\Gamma$, $\abs{v}\le \abs{u}$, and $w\in\mho(u)$. Then 
	\begin{equation*}
		F(v,s)F(s,w) 
		\le F(v,w) 
		\le \sum_{t\in N(u)} F(v,t)F(t,w)
		\le N_1 \sup_{t\in N(u)} \{F(v,t)F(t,w)\},
	\end{equation*}
	where $N_1 \= \sup_{u\in\Gamma} \#N(u)$ is finite.
\end{lemma}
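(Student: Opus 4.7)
The plan is to prove the three inequalities in order, using the strong Markov property in each, with the middle one being the substantive step.

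For the first inequality $F(v,s)F(s,w)\le F(v,w)$, I would apply the strong Markov property at the first visit to $s$: the event that the walk reaches $s$ and then later reaches $w$ has probability exactly $F(v,s)F(s,w)$ by strong Markov, and is contained in the event $\{Z_n=w\text{ for some }n\}$, whose probability is $F(v,w)$. This is routine and uses nothing beyond the Markov property.

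For the middle inequality, which is the core of the lemma, I would introduce the first crossing time of level $\abs{u}$,
\begin{equation*}
	T \= \min\bigl\{n\in\Z_{\ge 0}: \abs{Z_n}\ge\abs{u}\bigr\}.
\end{equation*}
Since $\abs{v}\le\abs{u}$ and $\abs{w}\ge\abs{u}$ (the latter forced by $w\in\mho(u)$ together with Assumption~(B) in Section~\ref{sct:Introduction}, which makes $\abs{Z_n}$ strictly increasing along any realization), on the event $\{Z_n=w\text{ for some }n\}$ the stopping time $T$ is finite and no $Z_n$ with $n<T$ equals $w$. The strong Markov property at $T$ then yields
\begin{equation*}
	F(v,w) = \sum_{t\in\Gamma}\bP_v(Z_T=t)\,F(t,w).
\end{equation*}
I would then verify that every $t$ contributing a nonzero summand lies in $N(u)$. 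The level bound $\abs{u}\le\abs{t}\le\abs{u}+R$ follows from Assumption~(A) applied to the step $Z_{T-1}\to Z_T$, noting that adjacent vertices in $\Gamma$ differ in level by at most $1$, so graph-distance bounds level-difference. The shadow condition $\mho(u)\cap\mho(t)\ne\emptyset$ follows from $w\in\mho(t)$, implied by $F(t,w)>0$, together with $w\in\mho(u)$. Finally, $\bP_v(Z_T=t)\le F(v,t)$ by inclusion of events, concluding the middle inequality.

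The third inequality is immediate once I establish that $N_1=\sup_{u\in\Gamma}\#N(u)<\infty$. For $\abs{u}\ge N_0$, Lemma~\ref{lem:locIsom}(ii) provides a graph isomorphism $\sigma|_{N(u)}\:N(u)\to N(\sigma u)$, whence $\#N(u)=\#N(\sigma u)$; iterating gives $\#N(u)=\#N\bigl(\sigma^{\abs{u}-N_0}u\bigr)$, whose argument has level $N_0$. Since each level of $\Gamma$ carries only finitely many vertices (being indexed by admissible words on a finite alphabet), $\sup_{\abs{u}\ge N_0}\#N(u)$ is finite, and the finitely many vertices of level less than $N_0$ each have $\#N(u)$ finite by the level restriction in the definition of $N(u)$. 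The principal obstacle is the middle inequality, specifically the identification of the support of $Z_T$ with a subset of $N(u)$, which is precisely where Assumptions~(A) and~(B) work together to both bound level-jumps and enforce monotonicity.
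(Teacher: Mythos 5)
Your proposal is correct and follows essentially the same route as the paper: the middle inequality is obtained by splitting each trajectory at the point where it first reaches level $\abs{u}$, observing that this point must lie in $N(u)$ (level pinned within $R$ by Assumption~(A), shadows intersecting because both contain $w$), and the finiteness of $N_1$ is deduced from the local similarity in Lemma~\ref{lem:locIsom} exactly as in the paper. Your formulation via the stopping time $T$ and the strong Markov property is a slightly cleaner packaging of the paper's ``there exists an index $i$ with $Z_i\in N(u)$'' union bound, but it is not a different argument.
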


\begin{proof}
	By Lemma~\ref{lem:locIsom}, the size of the set $N(u)$ is equal to $\#N(\sigma^m u)$ if $m \=\abs{u} - N_0 - R$ is positive. Thus, the following is finite:
	\begin{equation*} 
		N_1 = \sup_{u\in\Gamma} \#N(u) = \sup_{u\in\Gamma, \abs{u}\le N_0+R} \#N(u) <+\infty.
	\end{equation*}

	For each random trajectory $Z_0,\dots, Z_n$ from $Z_0=v\in\Gamma$ to $Z_n=w\in\Gamma$, by Assumption~(A) in Section~\ref{sct:Introduction}, $\abs{Z_i} < \abs{Z_{i+1}} \le \abs{Z_i}+R$. Hence, there exists $i\in\{0,\dots, n\}$ such that $\abs{\abs{Z_i}-\abs{u}}\le R$. Since $w\in\mho(u)$ and $w=Z_n\in\mho(Z_i)$, we have $Z_i\in N(u)$. It follows that
	\begin{align*}
		F(v,w)
		&= \bP_v(\exists n\in\Z_{>0} \text{ with } Z_n=w)\\
		&\le \sum_{t\in N(u)} \bP_v(\exists n,i\in\Z_{\ge0} \text{ with } i <n, Z_n=w, Z_i=t)\\
		&= \sum_{t\in N(u)} F(v,t)F(t,w) \\
		&\le N_1 \sup_{t\in N(u)} \{F(v,t)F(t,w)\}.
	\end{align*}
	
	For the first inequality,
	\begin{align*}
		F(v,w) 
		&= \bP_v(\exists n\in\Z_{>0} \text{ with } Z_n=w) \\
		&\ge \bP_v(\exists n,i\in\Z_{\ge0} \text{ with } i <n, Z_n=w, Z_i=s) 
		= F(v,s)F(s,w).\qedhere
	\end{align*}
\end{proof}

\section{Topology of the Martin boundary} \label{sct:MartinBoundary}
  In this section, we assume that the dynamical system $(X,f)$ satisfies the Assumptions in Subsection~\ref{subsect:expandingDyn}. Let $\Gamma$ be the tile graph associated with $f$ and a fixed Markov partition $\alpha$ with each $A\in\alpha$ connected so that Theorem~\ref{thm:hyp} can be applied. We focus on some basic properties of the random walks on the tile graph $\Gamma$ under the Assumptions in Section~\ref{sct:Introduction}. 

 In Subsection~\ref{subsct:ProofThmSurjection}, we show that under the Assumptions in Section~\ref{sct:Introduction}, the Martin boundary of $(\Gamma, P)$ admits a surjection to the Gromov boundary of $\Gamma$. Combining this with Theorem~\ref{thm:hyp}, we establish Theorem~\ref{thm:main1}. We then provide a family of examples in Subsection~\ref{subsct:non-injective} to illustrate that this surjection may not be injective.

\subsection{Proof of Theorem~\ref{thm:main1}}  \label{subsct:ProofThmSurjection}
	Theorem~\ref{thm:main1} follows immediately from Theorem~\ref{thm:hyp} and Theorem~\ref{thm:surj} below. 

	\begin{theorem}\label{thm:surj}
		Let $(X,f)$, $\alpha$, and $\Gamma$ satisfies the assumptions in Theorem~\ref{thm:hyp}. Let $P$ be a transition probability satisfying the Assumptions in Section~\ref{sct:Introduction}. Let $\partial_M \Gamma$ and $\partial \Gamma$ be the Martin boundary of $(\Gamma, P)$ and the Gromov boundary of $\Gamma$, respectively. Then the identity map on $\Gamma$ extends continuously to a surjection $\Phi\:\partial_M\Gamma\rightarrow \partial \Gamma$.
	\end{theorem}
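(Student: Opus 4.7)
The plan is to extend the inclusion $\Gamma \hookrightarrow \Gamma \cup \partial \Gamma$ continuously to $\Gamma \cup \partial_M \Gamma$, with the restriction to $\partial_M \Gamma$ serving as $\Phi$. Given $\xi \in \partial_M \Gamma$, choose a sequence $\{v_n\} \subseteq \Gamma$ with $\hK(v_n) \to \xi$ in the Martin topology. Since $\Gamma$ has only finitely many vertices at each level and $\xi \notin \hK(\Gamma)$, one necessarily has $\abs{v_n} \to +\infty$. By compactness of $X \cong \partial \Gamma$ from Theorem~\ref{thm:hyp}, some subsequence $\{v_{n_k}\}$ converges in the Gromov boundary to a point $\eta \in X$, and we set $\Phi(\xi) \= \eta$. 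The main task is to verify that $\eta$ is independent of all choices; continuity and surjectivity will then follow easily.

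The heart of the argument is the following contradiction. Suppose $\{v_n\} \to \xi$ in Martin while two Gromov-convergent subsequences $\{v_{n_k}\} \to \eta_1$ and $\{v_{m_k}\} \to \eta_2$ satisfy $\eta_1 \neq \eta_2$. Choose $u \in \Gamma$ of sufficiently high level so that, by Proposition~\ref{prop:visualMetric}, $A_u$ is a small neighborhood of $\eta_1$ bounded away from $\eta_2$. For $k$ large, $A_{v_{m_k}}$ lies close to $\eta_2$; combined with the shadow estimate $A_{\mho(u)} \subseteq B(A_u, C_1 e^{-a\abs{u}})$ of Lemma~\ref{lem:shadow1}, this forces $v_{m_k} \notin \mho(u)$, hence $F(u, v_{m_k}) = 0$ and therefore $K(u, v_{m_k}) = 0$. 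Passing to the Martin limit yields $K(u, \xi) = 0$. I will then show $K(u, v_{n_k}) \ge c > 0$ uniformly in $k$, which contradicts $K(u, \xi) = \lim_k K(u, v_{n_k})$.

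For this uniform lower bound, apply the upper inequality of Lemma~\ref{lem:multiplicative} with $v = o$ and $w = v_{n_k}$: there exists $t_k \in N(u)$ such that $F(o, v_{n_k}) \le N_1 F(o, t_k) F(t_k, v_{n_k})$; since $N(u)$ is finite, a further subsequence can be chosen so that $t_k \equiv t^*$. For $k$ large, $A_{v_{n_k}} \subseteq A_u$, so Assumption~(C) in Section~\ref{sct:Introduction} allows one to concatenate single-step transitions along the ancestry chain from $u$ to $v_{n_k}$, giving $F(u, v_{n_k}) > 0$. The Harnack-type inequality (Lemma~\ref{lem:Harnack}) then yields $F(u, v_{n_k}) \asymp F(t^*, v_{n_k})$ with constants independent of $k$, and one concludes $K(u, v_{n_k}) = F(u, v_{n_k})/F(o, v_{n_k}) \gtrsim 1/(N_1 F(o, t^*)) > 0$, as required.

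Once well-definedness is secured, a diagonal argument gives continuity: for $\xi_n \to \xi$ in $\partial_M \Gamma$, pick $w_n \in \Gamma$ with $\hK(w_n)$ close to $\xi_n$ in Martin and close to $\Phi(\xi_n)$ in the Gromov compactification, so $w_n \to \xi$ in Martin forces $\Phi(\xi) = \lim_n \Phi(\xi_n)$. Surjectivity is also straightforward: given $\eta \in X$, choose $u_n \in \Gamma$ with $\eta \in A_{u_n}$ and $\abs{u_n} \to +\infty$, extract a Martin-convergent subsequence $u_{n_k} \to \xi \in \partial_M \Gamma$, and conclude $\Phi(\xi) = \eta$. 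The principal obstacle throughout is the quantitative lower bound on $K(u, v_{n_k})$: it requires both the Ancona-type multiplicativity (Lemma~\ref{lem:multiplicative}) and the Harnack-type inequality (Lemma~\ref{lem:Harnack}), the latter being essential precisely because Assumption~(B) forces the random walk to be reducible, obstructing the classical Harnack estimate.
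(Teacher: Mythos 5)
Your overall strategy is sound, and your continuity and surjectivity arguments coincide with the paper's (the same diagonal argument and the same compactness extraction). The core well-definedness step, however, takes a genuinely different route. The paper never needs a positive lower bound on the Martin kernel: it uses harmonicity of $K(\cdot,\xi)$ together with Assumption~(B) to get $\sum_{\abs{x}\ge M}\hP^{(M)}(o,x)K(x,\xi)=1$, so $K(\cdot,\xi)$ cannot vanish at every vertex of level $>M_2$; it then shows that if two subsequences had distinct Gromov limits, the sets $N(\alpha(M_2))$ and $N(\beta(M_2))$ along the two geodesic rays would be disjoint, so no vertex $u$ of level $>M_2$ could have points of both subsequences in $\mho(u)$, forcing $K(u,\xi)=0$ for \emph{all} such $u$ --- a contradiction obtained purely from the shadow combinatorics (Corollary~\ref{cor:shadow1}), without Lemma~\ref{lem:Harnack} or Lemma~\ref{lem:multiplicative}. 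You instead fix one vertex $u$ near $\eta_1$ and play the vanishing of $K(u,\cdot)$ along the $\eta_2$-subsequence against a uniform positive lower bound along the $\eta_1$-subsequence, obtained from the Ancona-type and Harnack-type lemmas. This works and is more quantitative, but it imports machinery the paper only needs later, and it is exactly where your one real imprecision sits: the claim that $A_{v_{n_k}}\subseteq A_u$ for large $k$ is false in general, because $\eta_1$ may lie on the topological boundary of the tile $A_u$ (e.g.\ $\eta_1=1/2$ with $A_u=[1/2-2^{-n},1/2]$ in the doubling-map example), so the shrinking tiles $A_{v_{n_k}}$ can approach $\eta_1$ from the other side and never be contained in $A_u$; your ``ancestry chain'' concatenation then does not start. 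The conclusion you need, namely $v_{n_k}\in\mho(u)$ for large $k$ (which gives $F(u,v_{n_k})>0$, the hypothesis $w\in\mho(u)$ of Lemma~\ref{lem:multiplicative}, and condition~(2) of Lemma~\ref{lem:Harnack}), is still correct, but the right justification is Lemma~\ref{lem:inShadow}: since $A_{v_{n_k}}$ Hausdorff-converges to $\{\eta_1\}\subseteq A_u$, eventually $A_{v_{n_k}}\subseteq B\bigl(A_u, C_3 e^{-a\abs{u}}\bigr)$, which places $v_{n_k}$ in $\mho(u)$. With that substitution your argument closes.
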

	\begin{proof}
		In the proof of this theorem, we always identify the Gromov boundary $\partial \Gamma$ of $\Gamma$ and the phase space $X$ in the sense of Theorem~\ref{thm:hyp}.

		Fix an arbitrary point $\xi\in\partial_M \Gamma$. By the definition of the Martin boundary, $\xi$ is associated with a harmonic function $K(\cdot,\xi)$ on $\Gamma$. Assume that a sequence $\{x_n\}_{n\in\Z_{>0}}$ in $\Gamma$ converges to $\xi\in\partial_M \Gamma$, or equivalently, $K(\cdot,x_n)$ converges pointwise to $K(\cdot, \xi)$. We aim to define $\Phi(\xi)$ as the limit point of $x_n$ on $\partial\Gamma$.
		
		\smallskip 
		\emph{Claim.} The sequence $x_n$ converges to a point $\eta\in\partial\Gamma$ as $n$ tends to $+\infty$.
		\smallskip 

		Note that for each $x\in\Gamma\cup \partial_M\Gamma$, $K(o,x)=1$. Since $K(\cdot,\xi)$ is harmonic, by (\ref{eq:hfun}) and Assumption~(B) in Section~\ref{sct:Introduction}, for each $M\in\Z_{>0}$,
		\begin{equation*}
			K(o,\xi) = \sum_{x\in\Gamma,\abs{x}\ge M} \hP^{(M)}(o,x) K(x,\xi).
		\end{equation*}
		Hence, there is a vertex $u\in\Gamma$ with $\abs{u}\ge M$ such that
		\begin{equation}\label{eq:Kneq0}
			K(u,\xi)\neq 0.
		\end{equation}

		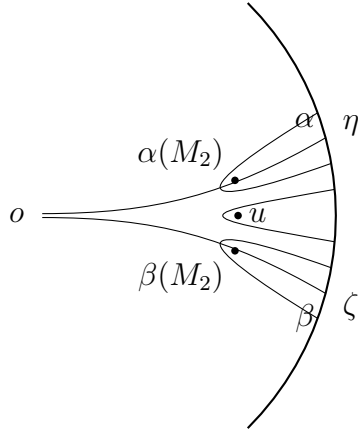
\begin{figure}[!ht]
			\begin{tikzpicture}
				\draw [thick,domain=-45:45] plot ({4*cos(\x)}, {4*sin(\x)});
				\draw[rotate=15] (0.1,0) to [bend right=15] (4,0) node[above left] {$\alpha$};
				\draw[rotate=-15] (0.1,0) to [bend left=15] (4,0) node[below left] {$\beta$};
				\draw (4.2,1.2) node {$\eta$};
				\draw (4.2,-1.2) node {$\zeta$};
				\draw (canvas polar cs:angle=20,radius=4cm) .. controls (2,.3) and (2,0) .. (canvas polar cs:angle=10,radius=4cm);
				\draw (canvas polar cs:angle=5,radius=4cm) .. controls (2,0) and (2,0) .. (canvas polar cs:angle=-5,radius=4cm);
				\draw (canvas polar cs:angle=-10,radius=4cm) .. controls (2,0) and (2,-.3) .. (canvas polar cs:angle=-20,radius=4cm);
				\fill (canvas polar cs:angle=10,radius=2.7cm) circle(.05) node[above left] {$\alpha(M_2)$};
				\fill (canvas polar cs:angle=-10,radius=2.7cm) circle(.05) node[below left] {$\beta(M_2)$};
				\fill (canvas polar cs:angle=0,radius=2.7cm) circle(.05) node[right] {$u$};
				\node (0,0) [left] {$o$};
				
			\end{tikzpicture}
			\caption{Shadows split near the Gromov boundary by hyperbolicity.}
			\label{fig:hyperbolicShadow}
		\end{figure}

		We prove the claim by contradiction and assume that there are subsequences $\{y_n\}$ and $\{z_n\}$ of $\{x_n\}$ converging to distinct points $\eta$ and $\zeta$ in the Gromov boundary $\partial\Gamma$ (which, by Theorem~\ref{thm:hyp}, is compact), respectively. For each $n\in\Z_{>0}$, choose $\alpha(n), \beta(n)\in\Gamma$ with $\abs{\alpha(n)}=\abs{\beta(n)}=n$ such that $\eta\in A_{\alpha(n)}$ and $\zeta\in A_{\beta(n)}$. Then $\alpha$ and $\beta$ are geodesic rays starting from $o$. See Figure~\ref{fig:hyperbolicShadow} for an intuition.

		Choose a sufficiently large number $M_2\in\Z_{>0}$ such that 
		\begin{equation}\label{def:M2}
			3C_6e^{-a(M_2-R)}<\rho(\zeta,\eta),
		\end{equation}
		where constants $C_6>0$ and $R>0$ are from Corollary~\ref{cor:shadow1} and the definition (\ref{def:N}) of $N(\cdot)$, respectively. If there is a vertex $u\in\Gamma$ with $\abs{u}\ge M_2-R$ such that $\mho(u)$ intersects with both $\mho(\alpha(M_2))$ and $\mho(\beta(M_2))$, then since $\eta\in A_{\mho(\alpha(M_2))}$ and $\zeta\in A_{\mho(\beta(M_2))}$, by Corollary~\ref{cor:shadow1},
		\begin{align*}
			\rho(\zeta, \eta) &\le \rho(\zeta, A_{\mho(u)}) + \rho(\eta, A_{\mho(u)}) + \diam A_{\mho(u)}\\
			&< \diam A_{\mho(\alpha(M_2))}+A_{\mho(\beta(M_2))}+ \diam A_{\mho(u)}
			<3C_6 e^{-a(M_2-R)},
		\end{align*}
		which contradicts with (\ref{def:M2}). Hence, by the definition of $N(\cdot)$,
		\begin{equation} \label{eq:emptyIntersect}
			N(\alpha(M_2))\cap N(\beta(M_2))=\emptyset.
		\end{equation}
		
		Since $\{y_n\}$ and $\{z_n\}$ converges to $\eta$ and $\zeta$, respectively, there is a number $M_1>0$ such that for each integer $n>M_1$, 
		\begin{equation*}
			A_{y_n} \subseteq \bigcup_{\substack{u\in\Gamma,\abs{u}=M_2+1,\\\eta\in A_u}}A_u
			\quad \text{and}\quad
			A_{z_n} \subseteq \bigcup_{\substack{u\in\Gamma,\abs{u}=M_2+1,\\\zeta\in A_u}}A_u.
		\end{equation*}
		Thus, by the construction of $\alpha$ and $\beta$, 
		\begin{equation*}
			y_n \in \mho(\alpha(M_2))
			\quad \text{and}\quad
			z_n \in \mho(\beta(M_2)).
		\end{equation*}
		Hence, by the definition of $N(\cdot)$, for each $u\in \Gamma$ with $\abs{u}\ge M_2$, if $y_{n_1},z_{n_2}\in\mho(u)$ for some integers $n_1,n_2 > M_1$, then $\tau^{\abs{u}-M_2}u\in N(\alpha(M_2))\cap N(\beta(M_2))$, which contradicts with (\ref{eq:emptyIntersect}). Here $\tau$ is defined in Subsection~\ref{subsct:visualMetric}. It follows from the definitions (\ref{def:mho}) and (\ref{def:K}) of $\mho(\cdot)$ and $K(\cdot,\cdot)$ that for all integers $n_1, n_2>M_1$ and each $u\in\Gamma$ with $\abs{u}>M_2$, either $K(u,y_{n_1})=0$ or $K(u,z_{n_2})=0$. Hence, $K(u,\xi)=0$ because $\xi$ is the limit point of $\{y_n\}$ and $\{z_n\}$ in the Martin boundary. This contradicts the discussion (\ref{eq:Kneq0}) above. This finishes the proof of the claim.

		For each $\xi\in\partial_M\Gamma$, we choose an arbitrary sequence $\{x_n\}$ in $\Gamma$ converging to $\xi$. Recall that since $\Gamma$ is a proper geodesic metric space as a $1$-complex, the Gromov boundary produces a compactification of $\Gamma$, i.e., $\Gamma\cup \partial\Gamma$ is compact. See for example, \cite[Part III, Proposition~3.7]{BH99}. By the claim above, there is a unique limit point $\eta\in\partial\Gamma$ of $\{x_n\}$. Now put $\Phi(\xi)\=\eta$, then $\Phi$ is what we want. The well-definedness is exactly what we have proved in the contradictory process of the proof of the claim.

		\smallskip

		We verify that the map $\Phi$ is continuous by a diagonal argument. Suppose for the purpose of contradiction that a sequence $\{x_n\}_{n=1}^{+\infty}$ in $\partial_M\Gamma$ converges to $x\in\partial_M\Gamma$, but there is an open set $U\subseteq \Gamma\cup\partial\Gamma$ with $U\ni\Phi(x)$ such that $\Phi(x_n)\not\in U$ for all $n\in\Z_{>0}$. Recall that $\Gamma\cup \partial\Gamma$ is compact and Hausdorff. Hence, we can find a closed subset $V\subseteq U$ and an open subset $W$ of $\Gamma \cup \partial\Gamma$ with $\Phi(x)\in W\subseteq V$.
		Since $\Gamma$ is dense in $\Gamma\cup \partial_M\Gamma$, for each $n\in\Z_{>0}$, we can choose vertices $\{y_{n,m}\}_{m=1}^{+\infty}$ in $\Gamma$ that converge to $x_n$ in $\Gamma\cup \partial_M \Gamma$ (thus to $\Phi(x_n) \notin V$ in $\Gamma\cup \partial \Gamma$) such that $y_{n,m}\not\in V$ for all $m\in\Z_{>0}$. We can choose open subsets $Y_1\supseteq \cdots \supseteq Y_n\supseteq\cdots$ of $\Gamma\cup \partial_M \Gamma$ such that
		\begin{equation}\label{eq:ConvergeTox}
			\bigcap_{n=1}^{+\infty}Y_n=\{x\}
		\end{equation}
		because $\Gamma\cup \partial_M\Gamma$ is metrizable. 
		For each $n\in\Z_{>0}$, since $\{x_i\}_{i=1}^{+\infty}$ converges to $x$, there exists $i_n\in\Z_{>0}$ such that $x_{i_n}\in Y_n$. Since $\{y_{i_n,m}\}_{m=1}^{+\infty}$ converges to $x_{i_n}$, there exists $j_n\in\Z_{>0}$ such that $y_{i_n,j_n}\in Y_n$. Hence, by (\ref{eq:ConvergeTox}), $\{y_{i_n,j_n}\}_{n=1}^{+\infty}$ converges to $x$ in the Martin boundary.
		By the definition of $\Phi$, $\{y_{i_n,j_n}\}$ converges to $\Phi(x)$ in the Gromov boundary. This contradicts with the assumptions that $y_{n,m}\not\in V$. Hence, the assumption that the sequence $\{x_n\}_{n=1}^{+\infty}$ in $\partial_M\Gamma$ converges to $x\in\partial_M\Gamma$ implies that $\{\Phi(x_n)\}_{n=1}^{+\infty}$ converges to $\Phi(x)$. Therefore, $\Phi$ is continuous.

		To see that $\Phi$ is surjective, we recall that $\Gamma\cup \partial_M\Gamma$ is compact. For each point $\xi\in\partial \Gamma$, we may choose a sequence $\{x_n\}\to \xi$ and find a subsequence of it which converges to $\eta\in\partial_M\Gamma$. Then by definition, $\Phi(\eta)=\xi$.
	\end{proof}

\subsection{Non-injective examples}  \label{subsct:non-injective}

		In this subsection, we give a proof of Theorem~\ref{thm:main1counter} to show that the surjection in Theorem~\ref{thm:surj} may not be a homeomorphism. We provide a family of examples to illustrate it. In these examples, the dynamical system is the doubling map on the unit circle and the Markov partition is associated with the dyadic expansion of real numbers. So these examples are simple in the combinatorial structure. The complexity comes from the transition probabilities.

		Put $X=S^1=\R/\Z$ and let $f\:x\mapsto 2x$ be the doubling map on $X$. We set $\alpha \= \{A_0, A_1\}$ with $A_0\=[0,1/2]$, $A_1\=[1/2, 1]$ as the Markov partition for $(X,f)$. Then the vertices of $\Gamma = \Gamma(f, \alpha)$ are all of the finite binary sequences. Each vertex corresponds to an interval of the form $I_{i,n} \= [i/2^n, (i+1)/2^n]$ with $n\in\Z_{\ge 0}$, $i\in\{0,1,\dots, 2^n-1\}$. For the sake of convenience, we use the notation $I_{i,n}$ for $i\in\Z$ and we should note that $I_{i,n}=I_{i+2^n,n}\subseteq X$. We denote the vertex $u\in\Gamma$ with $A_u=I_{i,n}$ by $u_{i,n}$. To better understand what the graph looks like, see Figure~\ref{fig:doublingGraph}. 

		\begin{figure}[!ht]
			\hspace*{-5em}\scalebox{.8}{
				\begin{tikzpicture}
					\tikzset{
						box/.style ={
							circle, 
							minimum width =10pt, 
							minimum height =10pt, 
							inner sep=2pt, 
							draw=black,
						},
						level 1/.style = {sibling distance = 200pt},
						level 2/.style = {sibling distance = 100pt},
						level 3/.style = {sibling distance = 50pt},
						level 4/.style = {sibling distance = 25pt, font=\tiny},
						edge from parent path = {(\tikzparentnode\tikzparentanchor) -- (\tikzchildnode\tikzchildanchor)}
					}
					
			
					\node[box] {$I_{0,0}$}
						child {node[box] (0){$I_{0,1}$}
							child {node[box] (00){$I_{0,2}$}
								child {node[box] (000){$I_{0,3}$}
									child {node[box] (0000){$I_{0,4}$}}
									child {node[box] (0001){$I_{1,4}$}}
								}
								child {node[box] (001){$I_{1,3}$}
									child {node[box] (0010){$I_{2,4}$}}
									child {node[box] (0011){$I_{3,4}$}}
								}
							}
							child {node[box] (01){$I_{1,2}$}
								child {node[box] (010){$I_{2,3}$}
									child {node[box] (0100){$I_{4,4}$}}
									child {node[box] (0101){$I_{5,4}$}}
								}
								child {node[box] (011){$I_{3,3}$}
									child {node[box] (0110){$I_{6,4}$}}
									child {node[box] (0111){$I_{7,4}$}}
								}
							}
						}
						child {node[box] (1){$I_{1,1}$}
							child {node[box] (10){$I_{2,2}$}
								child {node[box] (100){$I_{4,3}$}
									child {node[box] (1000){$I_{8,4}$}}
									child {node[box] (1001){$I_{9,4}$}}
								}
								child {node[box] (101){$I_{5,3}$}
									child {node[box] (1010){$I_{10,4}$}}
									child {node[box] (1011){$I_{11,4}$}}
								}
							}
							child {node[box] (11){$I_{3,2}$}
								child {node[box] (110){$I_{6,3}$}
									child {node[box] (1100){$I_{12,4}$}}
									child {node[box] (1101){$I_{13,4}$}}
								}
								child {node[box] (111){$I_{7,3}$}
									child {node[box] (1110){$I_{14,4}$}}
									child {node[box] (1111){$I_{15,4}$}}
								}
							}
						};

						\draw  (0) -- (10);
						\draw  (1) -- (01);
			
						\draw  (01) -- (100);
						\draw  (10) -- (011);
						\draw  (10) -- (110);
						\draw  (11) -- (101);
						\draw  (00) -- (010);
						\draw  (01) -- (001);
						
						\draw  (101) -- (1100);
						\draw  (110) -- (1011);
						\draw  (110) -- (1110);
						\draw  (111) -- (1101);
						\draw  (100) -- (1010);
						\draw  (101) -- (1001);
						\draw  (001) -- (0100);
						\draw  (010) -- (0011);
						\draw  (010) -- (0110);
						\draw  (011) -- (0101);
						\draw  (000) -- (0010);
						\draw  (001) -- (0001);
						\draw  (011) -- (1000);
						\draw  (100) -- (0111);
			
						\draw  (0) .. controls +(-3,-1) and +(3,1) ..  (11);
						\draw  (1) .. controls +(3,-1) and +(-3,1) ..  (00);
						\draw  (00) .. controls +(-3,-1) and +(3,1) ..  (111);
						\draw  (11) .. controls +(3,-1) and +(-3,1) ..  (000);
						\draw  (000) .. controls +(-3,-1) and +(3,1) ..  (1111);
						\draw  (111) .. controls +(3,-1) and +(-3,1) ..  (0000);

						\draw  (0) -- (1);
						\draw  (00) -- (01) -- (10) -- (11);
						\draw  (000) -- (001) -- (010) -- (011) -- (100) -- (101) -- (110) -- (111);
						\draw  (0000) -- (0001) -- (0010) -- (0011) -- (0100) -- (0101) -- (0110) -- (0111) -- (1000) -- (1001) -- (1010) -- (1011) -- (1100) -- (1101) -- (1110) -- (1111);
			
						\draw[dashed] (00) .. controls +(-5,-.5) and +(5,-.5) .. (11);
						\draw[dashed] (000) .. controls +(-5,-.5) and +(5,-.5) .. (111);
						\draw[dashed] (0000) .. controls +(-5,-.5) and +(5,-.5) .. (1111);
					
				\end{tikzpicture}
			}
			\caption{The tile graph of the doubling map on the circle.}
			\label{fig:doublingGraph}
		\end{figure}
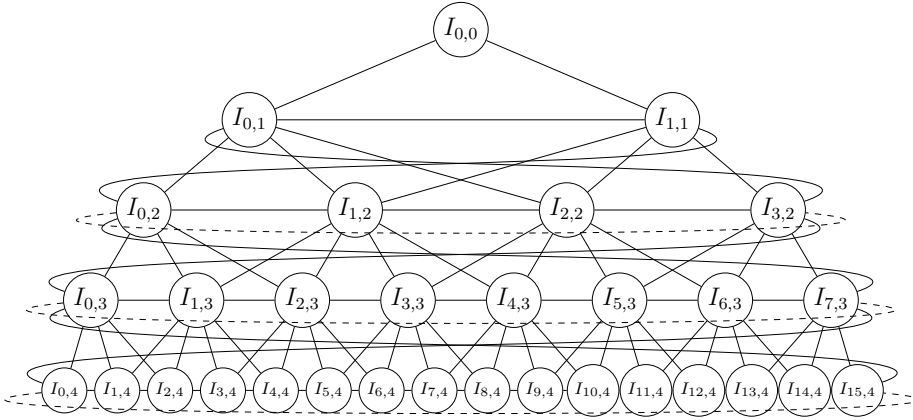
		
		For each $x\in (0,1)$, put $y\= (1-x)/3$.
		Fix $x\in(0,1)$. We define a family of transition probabilities $p_x$ on $\Gamma$. Define
		\begin{equation*}
		    p_x(o,1) \= \frac{1+2x}{3},\quad p_x(o,0) \= \frac{2-2x}{3},
		\end{equation*}
		and for all integers $n,m\in \Z_{>0}$, $i\in\{0,1,\dots, 2^n-1\}$, and $j\in\{0,1,\dots, 2^{m}-1\}$, put
		\begin{equation*}
			p_x(u_{i,n}, u_{j,m}) \= \begin{cases}
				y & \text{if }m=n+1, j\in J_{i,n} \text{ such that } j \not\equiv 2\bmod{4},\\
				x & \text{if }m=n+1, j\in J_{i,n}  \text{ such that } j \equiv 2\bmod{4},\\
				0 &\text{otherwise}, \end{cases}
		\end{equation*}
		where
		\begin{equation*}
			J_{i,n} \= \bigl\{ j \in \bigl[0, 2^{n+1}-1 \bigr] : k \in \{2i-1, 2i, 2i+1, 2i+2\}, j \equiv k \bmod{ 2^{n+1}} \bigr\}.
		\end{equation*}

		It is easy to verify that $p_x$ satisfies the Assumptions in Section~\ref{sct:Introduction}. For the sake of convenience, we write $p=p_x$ if there is no other choice of $x$. See Figure~\ref{fig:doublingWalk} for an intuition of the distribution of $p_x$.

			\begin{figure}[!ht]
				\scalebox{.8}{
				\begin{tikzpicture}
					\tikzset{
						box/.style ={
							circle, 
							minimum width =10pt, 
							minimum height =10pt, 
							inner sep=2pt, 
							draw=black,
						},
						level 1/.style = {sibling distance = 200pt},
						level 2/.style = {sibling distance = 100pt},
						level 3/.style = {sibling distance = 50pt},
						level 4/.style = {sibling distance = 25pt, font=\tiny},
						edge from parent path = {[->](\tikzparentnode\tikzparentanchor) -- (\tikzchildnode\tikzchildanchor)}
					}
					

					\node[box] {$\emptyset$}
						child {node[box] (0){0}
							child {node[box] (00){00}
								child {node[box] (000){000}
									child {node[box] (0000){0000}}
									child {node[box] (0001){0001}}
								}
								child {node[box] (001){001}
									child {node[box,fill=yellow] (0010){0010}}
									child {node[box] (0011){0011}}
								}
							}
							child {node[box] (01){01}
								child {node[box,fill=yellow] (010){010}
									child {node[box] (0100){0100}}
									child {node[box] (0101){0101}}
									coordinate (B1)
								}
								child {node[box] (011){011}
									child {node[box,fill=yellow] (0110){0110} coordinate (B2)}
									child {node[box] (0111){0111}}
								}
							}
							edge from parent node[above left]{$\frac{2-2x}{3}$}
						}
						child {node[box] (1){1}
							child {node[box,fill=yellow] (10){10}
								child {node[box] (100){100}
									child {node[box] (1000){1000} coordinate (B3)}
									child {node[box] (1001){1001}}
									coordinate (B4)
								}
								child {node[box] (101){101}
									child {node[box,fill=yellow] (1010){1010}}
									child {node[box] (1011){1011}}
								}
							}
							child {node[box] (11){11}
								child {node[box,fill=yellow] (110){110}
									child {node[box] (1100){1100}}
									child {node[box] (1101){1101}}
								}
								child {node[box] (111){111}
									child {node[box,fill=yellow] (1110){1110}}
									child {node[box] (1111){1111}}
								}
							}
							edge from parent node[above right]{$\frac{1+2x}{3}$}
						};

						\draw [->] (0) -- (10);
						\draw [->]  (1) -- (01);
						\draw [->]  (01) -- (100);
						\draw [->]  (10) -- (011);
						\draw [->]  (10) -- (110);
						\draw [->]  (11) -- (101);
						\draw [->]  (00) -- (010);
						\draw [->]  (01) -- (001);
						
						\draw [->]  (101) -- (1100);
						\draw [->]  (110) -- (1011);
						\draw [->]  (110) -- (1110);
						\draw [->]  (111) -- (1101);
						\draw [->]  (100) -- (1010);
						\draw [->]  (101) -- (1001);
						\draw [->]  (001) -- (0100);
						\draw [->]  (010) -- (0011);
						\draw [->]  (010) -- (0110);
						\draw [->]  (011) -- (0101);
						\draw [->]  (000) -- (0010);
						\draw [->]  (001) -- (0001);
						\draw [->]  (011) -- (1000);
						\draw [->]  (100) -- (0111);

						\draw [->]  (0) .. controls +(-3,-1) and +(3,1) ..  (11);
						\draw [->]  (1) .. controls +(3,-1) and +(-3,1) ..  (00);
						\draw [->]  (00) .. controls +(-3,-1) and +(3,1) ..  (111);
						\draw [->]  (11) .. controls +(3,-1) and +(-3,1) ..  (000);
						\draw [->]  (000) .. controls +(-3,-1) and +(3,1) ..  (1111);
						\draw [->]  (111) .. controls +(3,-1) and +(-3,1) ..  (0000);

						\draw[red] ([xshift=-30pt, yshift=20pt]B1)--([xshift=-10pt, yshift=-15pt]B2)--([xshift=10pt, yshift=-15]B3)--([xshift=20pt, yshift=20pt]B4)--cycle;
					
				\end{tikzpicture}
			}
				\caption{The transition probabilities to the shaded vertices are all equal to $x$, while to the remaining vertices (except for $0$ and $1$), they are all equal to $(1-x)/3$. Some of the vertices $x_n$, $y_n$, and $z_n$ are enclosed by the trapezoid.}
				\label{fig:doublingWalk}
			\end{figure}
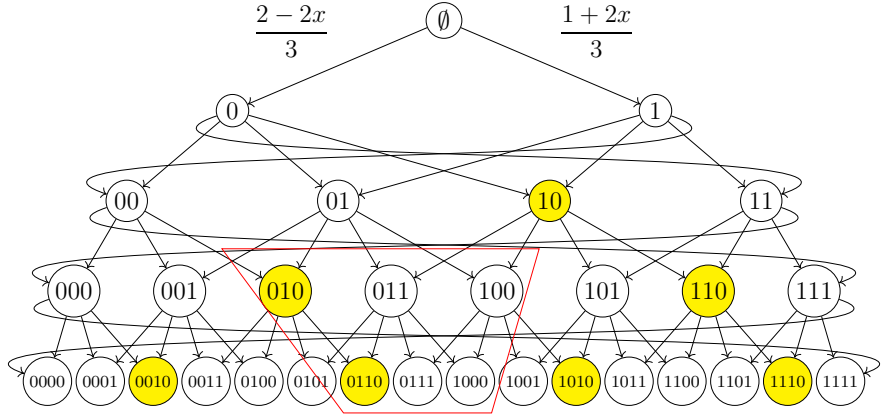

		Theorem~\ref{thm:main1counter} follows from the following proposition. Informally speaking, the proposition says that if $x\in(2/5, 1)$, then the transition probability $p_x$ becomes ``unbalanced'', and this property leads to the existence of different growth rates in harmonic functions supported near a geodesic ray starting from $o$. On the other hand, although there is a counterexample of $\Phi$ being a homeomorphism, we still have ideas to prove that sometimes with a ``balanced'' transition probability, it is a homeomorphism.

		\begin{prop}
			Assume that the dynamical system $(X,f)$, the Markov partition $\alpha$, and the transition probability $p_x$ are defined above for some $x\in(0,1)$. The surjection $\Phi$ given in Theorem~\ref{thm:surj} is a homeomorphism if $x\in (0,2/5)$, while if $x\in(2/5,1)$, then $\Phi$ is not a homeomorphism.
		\end{prop}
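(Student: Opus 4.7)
The proposition splits into two parts: (i) $\Phi$ is a homeomorphism when $x \in (0, 2/5)$, and (ii) $\Phi$ is not when $x \in (2/5, 1)$. Since $\Phi$ is already a continuous surjection between compact Hausdorff spaces by Theorem~\ref{thm:surj}, the question is injectivity.

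For (ii), I would exhibit two sequences $\{v_n\}, \{w_n\}$ converging to $\xi = 0 \in S^1$ in the Gromov boundary whose Martin kernels have distinct limits. The natural candidates are $v_n = u_{0, n}$ and $w_n = u_{2^n - 2, n}$, both corresponding to intervals shrinking to $\{0\} \in S^1$. A direct case analysis of $(I_{n-1}, \epsilon_{n-1})$ pairs satisfying the congruence conditions for $I_n \in \{0, 2^n - 1, 2^n - 2\}$ modulo $2^n$ yields the recursions
\begin{equation*}
a_{n+1} = b_{n+1} = y(a_n + b_n), \qquad f_{n+1} = x(b_n + f_n),
\end{equation*}
for $a_n = F(o, u_{0,n})$, $b_n = F(o, u_{2^n - 1, n})$, and $f_n = F(o, u_{2^n - 2, n})$. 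These give $a_n = b_n = 2^{n-2} y^{n-1}$, and the ratio $r_n = f_n/a_n$ satisfies the affine recursion $r_{n+1} = (x/2y)(1 + r_n)$, whose fixed point $r^* = x/(2y - x)$ is attracting iff $x < 2y$. Running the same recursion from $u_{0, 2}$ (with the altered initial condition $\tilde{f}_1 = 0$) yields Martin kernel limits $K(u_{0, 2}, u_{0, n}) = 1/(2y)$ and $K(u_{0, 2}, u_{2^n - 2, n}) \to y/[x(x - y)]$; these coincide iff $(x - 2y)(x + y) = 0$, i.e., iff $x = 2/5$. Thus for $x > 2/5$ they differ, producing distinct Martin boundary points above $\xi = 0$.

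For (i), when $x < 2y$ the same recursion contracts toward $r^*$, and both kernel limits above equal $1/(2y)$. I would promote this single-sequence calculation to a uniform Ancona-type inequality $F(u, v) \asymp F(u, w) F(w, v)$ for $u, v \in \Gamma$ and $w$ on a level-geodesic between them, with multiplicative constants uniform over all such triples. Lemma~\ref{lem:multiplicative} provides the lower direction free of charge; for the upper direction I would use the contraction $x/(2y) < 1$ to bound off-geodesic Green-function contributions by a geometric series. Combined with hyperbolicity (Theorem~\ref{thm:hyp}) and the standard Ancona--Martin identification (\cite{Anc87}, \cite[Section~27]{Woe00}), this shows the Martin boundary is canonically identified with the Gromov boundary, so $\Phi$ is a homeomorphism.

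The main technical obstacle is in (i): promoting the attracting-fixed-point property of $r_{n+1} = (x/2y)(1 + r_n)$ from a single-sequence asymptotic statement to a uniform pointwise Ancona inequality valid for every triple $(u, w, v)$. Assumption~(D) and Lemma~\ref{lem:locIsom} help parametrize off-geodesic paths using the self-similarity of $\Gamma$, but the combinatorial accounting that the contributions from all deviations from a level-geodesic sum to a bounded multiple of the main term is the delicate part.
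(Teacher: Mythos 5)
Your part~(ii) is essentially correct and is a legitimate alternative to the paper's computation: you work above the fixed point $0\in S^1$ with a forward recursion on the Green functions $F(o,\cdot)$ along the three relevant columns, whereas the paper works above $1/2$ and diagonalizes the $3\times 3$ backward transition matrix $M$ with spectrum $\{0,x,2y\}$; both detect the same dichotomy $x\gtrless 2y$. One small caution: your closed form $y/[x(x-y)]$ for $\lim \tilde f_n/f_n$ is only valid when $x>2y$ (the geometric series you sum diverges otherwise), so the clean "coincide iff $x=2/5$" phrasing should be stated only in that regime — though you do draw the right conclusion for $x>2/5$.

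Part~(i) has a genuine gap. You propose to deduce injectivity from a uniform Ancona inequality plus "the standard Ancona--Martin identification," but you yourself flag that the uniform inequality is unproven, and the route is in fact blocked in principle: the walk here is strictly level-increasing (Assumption~(B)), so it is reducible, the Harnack inequality fails, and the machinery of \cite{Anc87} and \cite[Section~27]{Woe00} cannot be invoked — this is exactly the obstruction the paper emphasizes in the introduction. Moreover, even granting an Ancona-type inequality, identifying the Martin boundary with the Gromov boundary requires an additional boundary-Harnack/contraction argument to show that \emph{all} Martin kernels above a single Gromov boundary point coincide; Lemma~\ref{lem:multiplicative} only gives the easy direction. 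The paper avoids Ancona entirely: for $x\in(0,2/5)$ it takes two Martin points $\eta,\zeta$ over the same $\xi\in X$, shows $K(\cdot,\eta)$ is supported on a strip of three (or, at dyadic $\xi$, four) columns, and analyzes the backward harmonicity recursion. At non-dyadic $\xi$ the ratio $\Lambda_n=K(x_n,\eta)/K(y_n,\eta)$ evolves by one of four M\"obius-type maps $F_0,\dots,F_3$ depending on $i_n\bmod 4$, each a uniform contraction of $[0,1]$ precisely when $z=x/y\in(0,2)$, so a nested-interval argument forces $\Lambda_n(\eta)=\Lambda_n(\zeta)$ and hence $K(\cdot,\eta)=K(\cdot,\zeta)$; dyadic $\xi$ needs a separate $4\times4$ eigenvector analysis. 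Your single recursion $r_{n+1}=(x/2y)(1+r_n)$ corresponds to only one of these four transition types and only to the columns near $0$, so even as a sketch it does not cover the general point, and the dyadic case is not addressed at all. To repair your argument you would need to replace the Ancona step by the direct support-plus-contraction analysis of the kernels themselves.
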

		\begin{proof}

		For each $x\in (2/5, 1)$, we show that for $t=1/2$, $\# \Phi^{-1}(t)\ge 2$. Consider two sequences of vertices $\{x_n\}_{n\in\Z_{>0}}$ and $\{y_n\}_{n\in\Z_{>0}}$ with
		\begin{equation*}
			x_n \= u_{2^{n-1}-2, n} \quad \text{and} \quad y_n \= u_{2^{n-1}-1, n} .
		\end{equation*}
		Since $A_{y_n} = [1/2-1/2^n, 1/2]$ contains the point $t$, $\{y_n\}$ is a geodesic ray from $o$ to the boundary point $t$. Hence, $y_n$ converges to $t$ in the topology of the Gromov boundary. Note that $d(x_n,y_n)=1$, so $x_n$ has a bounded distance from $y_n$ and $x_n$ also converges to $t$ in the topology of the Gromov boundary.

		Then we show that $\{K(\cdot, x_n)\}, \{K(\cdot,y_n)\}$ converge to different harmonic functions on $\Gamma$, thus, $\{x_n\}$ and $\{y_n\}$ converge to different points in the Martin boundary $\Mb\Gamma$. We moreover put $z_n\= u_{2^{n-1}, n}$. Then by the definition of $p$, as we can see in Figure~\ref{fig:harmonicConstruct} that for each $n \in \Z_{>0}$ and each $v\in\Gamma$,
		\begin{align*}
			\hP(v,x_{n+1})>0 \iff v \in \{x_n, y_n\},\\
			\hP(v,y_{n+1})>0 \iff v \in \{y_n, z_n\},\\
			\hP(v,z_{n+1})>0 \iff v \in \{y_n, z_n\}.
		\end{align*}
		Hence, by induction, $K(\cdot, x_n)$ and $K(\cdot, y_n)$ are both supported on $\{x_n\}\cup\{y_n\}\cup\{z_n\} \cup \{o, 0, 1\}$.

		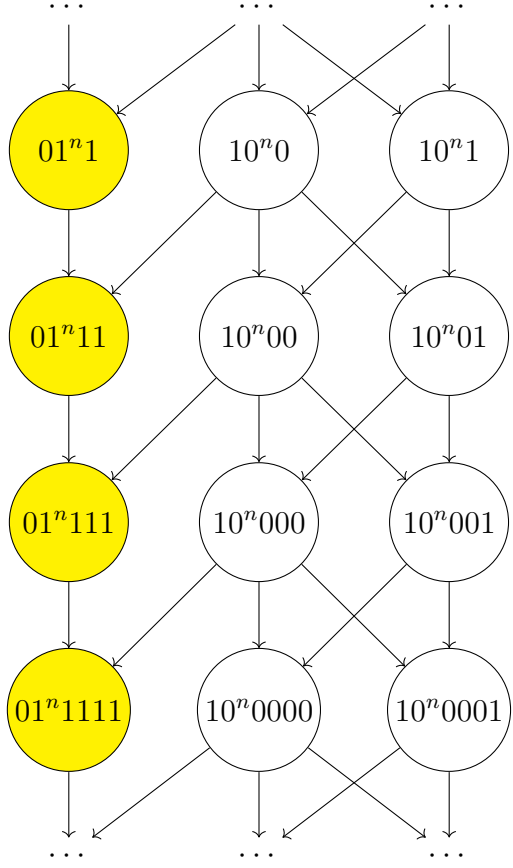
\begin{figure}[!ht]
			\begin{tikzpicture}
				\tikzset{
					box/.style ={
						circle, 
						minimum width =45pt, 
						minimum height =45pt, 
						inner sep=2pt, 
						draw=black,
					}
				}
				\matrix (m) [matrix of nodes, row sep=25pt, column sep=25pt]{
					$\cdots$ &$\cdots$& $\cdots$\\
					|[box, fill=yellow]| $01^n1$ & |[box]| $10^n0$ & |[box]| $10^n1$ \\
					|[box, fill=yellow]| $01^n11$ & |[box]| $10^n00$ & |[box]| $10^n01$ \\
					|[box, fill=yellow]| $01^n111$ & |[box]| $10^n000$ & |[box]| $10^n001$ \\
					|[box, fill=yellow]| $01^n1111$ & |[box]| $10^n0000$ & |[box]| $10^n0001$ \\
					$\cdots$ &$\cdots$& $\cdots$\\
				};
				\foreach \i [remember=\i as \j ((initially 1))] in {2,...,6}{
					
					\draw[->] (m-\j-1) -- (m-\i-1);
					\draw[->] (m-\j-2) -- (m-\i-1);
					\draw[->] (m-\j-2) -- (m-\i-2);
					\draw[->] (m-\j-3) -- (m-\i-2);
					\draw[->] (m-\j-2) -- (m-\i-3);
					\draw[->] (m-\j-3) -- (m-\i-3);
				}
			\end{tikzpicture}
			\caption{A part of the subgraph consisting of $x_n$, $y_n$, and $z_n$.}
			\label{fig:harmonicConstruct}
		\end{figure}

		Note that $K(u, x_n)$ (resp.\ $K(u, y_n)$) is harmonic at each $u\in\Gamma$ with $\abs{u}<n$. Thus, by the definition of $p_x$,
		\begin{align*}
			K(x_m, x_n) &= \hP(x_m, x_{m+1})K(x_{m+1}, x_n) = x K(x_{m+1}, x_n),\\
			K(y_m, x_n) &= \hP(y_m, x_{m+1})K(x_{m+1}, x_n) + \hP(y_m, y_{m+1})K(y_{m+1}, x_n)\\
			&\qquad + \hP(y_m, z_{m+1})K(z_{m+1}, x_n)\notag\\
			&= x K(x_{m+1}, x_n) + y (K(y_{m+1}, x_n) + K(z_{m+1}, x_n)),\notag\\
			K(z_m, x_n) &= \hP(z_m, y_{m+1})K(y_{m+1}, x_n) + \hP(z_m, z_{m+1})K(z_{m+1}, x_n)\\
			&= y (K(y_{m+1}, x_n) + K(z_{m+1}, x_n))\notag
		\end{align*}
		for each integer $m \in [0,n-1]$ with initial value
		\begin{subequations}
			\begin{align*}
				K(x_m, x_m) &=  1 / F(o,x_m),  &K(y_m, x_m)&=K(z_m,x_m)=0,\\
				K(y_m, y_m) &=  1 / F(o,y_m),  &K(x_m, y_m)&=K(z_m,y_m)=0.
			\end{align*}
		\end{subequations}
		
		Hence, if we put
		\begin{equation*}
			M \= \begin{bmatrix}
				x&0&0\\
				x&y&y\\
				0&y&y
			\end{bmatrix},
		\end{equation*}
		then
		\begin{equation} \label{eq:EXiter}
			\begin{bmatrix}
				K(x_{m-k}, x_m)\\K(y_{m-k}, x_m)\\K(z_{m-k}, x_m)
			\end{bmatrix}
			= \frac{M^k}{F(o,x_m)} 
			\begin{bmatrix}
				1\\0\\0
			\end{bmatrix}, \quad \begin{bmatrix}
				K(x_{m-k}, y_m)\\K(y_{m-k}, y_m)\\K(z_{m-k}, y_m)
			\end{bmatrix}
			= \frac{M^k}{F(o,x_m)} 
			\begin{bmatrix}
				0\\1\\0
			\end{bmatrix}.
		\end{equation}
		The characteristic polynomial of $M$ is
		\begin{equation*}
			\chi_M(t) = (t-x)(t- 2 y)t .
		\end{equation*}
		Recall that $y=(1-x)/3$. The characteristic vectors of characteristic values $0$, $x$, and $2y$ are $(0,-1,1)$, $(5x-2 ,4x-1, 1-x)$ and $(0, 1, 1)$, respectively. We should also note that for each $x > 2/5$, the inequality $x > 2y$ always holds. For each $n\in\Z_{>0}$, we can see the asymptotic behavior of $M^n(1,0,0)$ from the decomposition
		\begin{equation*}
			M^n\begin{bmatrix}
				1\\0\\0
			\end{bmatrix}=\frac{x^n}{5x-2}
			\begin{bmatrix}
				5x-2\\4x-1\\1-x
			\end{bmatrix}-\frac{3x(2(1-x)/3)^n}{2(5x-2)}
			\begin{bmatrix}
				0\\1\\1
			\end{bmatrix}.
		\end{equation*}
		Hence, by (\ref{eq:EXiter}), we have
		\begin{subequations}\label{eq:EXasymp1}
			\begin{align}
				\lim_{m\to+\infty} K(x_i, x_m) : K(y_i, x_m) : K(z_i, x_m) &= 5x-2:4x-1:1-x, \\
				\lim_{m\to+\infty} K(x_i, x_m):K(x_{i+1}, x_m) &= 1:x.
			\end{align}
		\end{subequations}
		That implies that $x_m$ converges to a point $\xi_x$ in the Martin boundary, and moreover $K(\cdot, \xi_x)$ can be calculated by (\ref{eq:EXasymp1}) explicitly since we have showed that $K(\cdot, x_m)$ is supported on $\{x_n,y_n,z_n:n\in\Z_{>0}\}\cup\{0,1,o\}$. To be precise, there is a constant $C_x>0$ such that for each integer $n\ge 2$,
		\begin{equation}\label{eq:res1}
			K(x_n,\xi_x) = C_x x^{-n}(5x-2),\quad K(y_n,\xi_x) = C_x x^{-n}(4x-1), \quad K(z_n,\xi_x) = C_x x^{-n}(1-x).
		\end{equation}
		
		However, for the asymptotic behavior of $K(\cdot, y_m)$, we have for each $n\in\Z_{>0}$,
		\begin{equation*}
			M^n\begin{bmatrix}
				0\\1\\0
			\end{bmatrix}=
			\frac{(2(1-x)/3)^n}{2}
			\begin{bmatrix}
				0\\1\\1
			\end{bmatrix}.
		\end{equation*}
		Hence, by (\ref{eq:EXiter}), we have
		\begin{subequations}\label{eq:EXasymp2}
			\begin{align}
				\lim_{m\to+\infty} K(x_i, y_m) : K(y_i, y_m) : K(z_i, y_m) &= 0:1:1,\\
				\lim_{m\to+\infty} K(x_i, x_m):K(x_{i+1}, x_m) &= 3:2(1-x).
			\end{align}
		\end{subequations}
		
		This implies that $y_m$ converges to a point $\xi_y$ in the Martin boundary, and moreover $K(\cdot, \xi_y)$ can be calculated by (\ref{eq:EXasymp2}) explicitly since we have showed that $K(\cdot, y_m)$ is supported on $\{x_n,y_n,z_n:n\in\Z_{>0}\}\cup\{0,1,o\}$. To be precise, there is a constant $C_y>0$ such that for each $n\ge 2$,
		\begin{equation}\label{eq:res2}
			K(x_n,\xi_y) = 0,\quad K(y_n,\xi_y) = C_y\biggl(\frac{3}{2(1-x)}\biggr)^n, \quad K(z_n,\xi_y) = C_y\biggl(\frac{3}{2(1-x)}\biggr)^n.
		\end{equation}

		Now by (\ref{eq:res1}) and (\ref{eq:res2}), $\xi_x$ and $\xi_y$ are different points in the Martin boundary. However, from the construction of $\{x_n\}$ and $\{y_n\}$, $\Phi(\xi_x) = \Phi(\xi_y) = 1/2$. Therefore, in this situation of $x\in (2/5,1)$, $\Phi$ is not a homeomorphism.

		\smallskip

		For $x\in (0, 2/5)$, we prove that $\Phi$ is a homeomorphism. For each $\xi\in X$, we assume that $\eta, \zeta\in\Mb\Gamma$ are two preimages of $\xi$, i.e., $\Phi(\eta)=\Phi(\zeta)=\xi$. We aim to prove $\eta=\zeta$. Let $K(\cdot,\eta)$ and $K(\cdot, \zeta)$ be the harmonic functions associated with $\eta$ and $\zeta$, respectively. It suffices to show that $K(\cdot, \eta)=K(\cdot, \zeta)$.
		
		\smallskip
		
		\emph{Case 1.} If $\xi = 2^{-k} m$ for some $m,k\in\Z_{\ge 0}$ with $k>3$ and $4|m$, then we denote, for each $n\in \Z_{>0}$,
		\begin{equation*}
			x_n \= u_{2^nm-2, k+n}, \quad y_n \= u_{2^nm-1, k+n}, \quad z_n \= u_{2^nm, k+n}, \quad w_n \= u_{2^nm+1, k+n}.
		\end{equation*}
		Note that for each $u=u_{i,k+n}$ with $u\not\in\{x_n, y_n, z_n, w_n\}$, $A_{\mho(u)} = \bigl( (i-1)/2^{n+k}, (i+2)/2^{n+k} \bigr)$. For such a vertex $u$, $\xi\not\in \overline{A_{\mho(u)}}$, thus $K(u,\eta)=0$. Therefore, if $\abs{u}>k$, then $K(u,\eta)>0$ implies $u\in\{x_n, y_n, z_n, w_n\}$ for some $n\in\Z_{>0}$.

		We do similar calculations as in the case of $x\in(2/5, 1)$. Put $y\=(1-x)/3$.
		Since $K(\cdot, \eta)$ is harmonic, we have 
		\begin{equation*}
			\begin{bmatrix}
				K(x_{n}, \eta)\\K(y_{n}, \eta)\\K(z_{n}, \eta)\\K(w_{n}, \eta)
			\end{bmatrix}
			= \begin{bmatrix}
				x&&&\\ x&y&y&\\ &y&y&y\\ &&&y
			\end{bmatrix}^k 
			\begin{bmatrix}
				K(x_{n+l}, \eta)\\K(y_{n+l}, \eta)\\K(z_{n+l}, \eta)\\K(w_{n+l}, \eta)
			\end{bmatrix}
		\end{equation*}
		Since the matrix
		\begin{equation*}
			M\= \begin{bmatrix}
				x&&&\\ x&y&y&\\ &y&y&y\\ &&&y
			\end{bmatrix}
		\end{equation*}
		has a maximal characteristic value $\lambda=2y$ with characteristic vector $(0,1,1,0)$. Since the characteristic vectors of the other characteristic values are
		\begin{equation*}
			\begin{bmatrix}
				x-2y\\ x-y\\ y\\ 0
			\end{bmatrix},\;
			\begin{bmatrix}
				0\\ -1\\ 0\\ 1
			\end{bmatrix},\text{ and }
			\begin{bmatrix}
				0\\ -1\\ 1\\ 0
			\end{bmatrix},
		\end{equation*}
		which are all not non-negative, there is a constant $C>0$ such that for each $n\in\Z_{>0}$,
		\begin{equation*}
			K(x_n,\eta) = K(w_n,\eta) = 0 \quad\text{ and }\quad K(y_n,\eta) = K(z_n, \eta) = C (2y)^n.
		\end{equation*}
		This determines the whole function $K(\cdot, \eta)$ by 
		\begin{equation*}
			K(u,\eta) = \sum_{v\in\{x_1,y_1, z_1, w_1\}} F(u,v) K(v,\eta)  \quad \text{for each } u\in\Gamma\text{ with }\abs{u}\le k.
		\end{equation*}
		So does $K(\cdot, \zeta)$. Thus, for some $D>0$, $K(\cdot, \eta) = D K(\cdot, \zeta)$. Moreover, $K(o,\eta)=K(o,\zeta)=1$. Hence, $K(\cdot, \eta) = K(\cdot, \zeta)$.

		\smallskip

		\emph{Case 2.} If $\xi \neq 2^{-k} m$ for every $m,k\in\Z_{\ge 0}$, then $\xi$ is not on the boundary of any tile. Hence, for each $n\in \Z_{\ge 0}$, there is a unique tile $y_n\in\Gamma$ with $\abs{y_n}=n$ such that $\xi\in A_{y_n}$. We assume that $y_n=u_{i_n,n}$. Then we denote $x_n\= u_{i_n-1,n}$ and $z_n\= u_{i_n+1,n}$ the two adjacent tiles of the same level as $y_n$.

		Note that for all integers $i\in\Z$ and $n>1$, we have 
		\begin{equation*}
			A_{\mho(u_{i,n})} = \bigl( (i-1)/2^{n+k}, (i+2)/2^{n+k} \bigr).
		\end{equation*}
		If $\xi\in A_{\mho(u_{i,n})}$, then $u_{i,n}\in\{x_n, y_n, z_n\}$. That is, $K(\cdot,\eta)$ and $K(\cdot,\zeta)$ are both supported on $\{x_n, y_n, z_n:n\in\Z_{\ge 0}\}$.

		According to the remainder of $i_n\bmod{4}$, the transition matrix falls into one of the 4 types, which are
		\begin{equation*}
			M_0 \= \begin{bmatrix}
				y&y&\\ y&y&y\\ &&y
			\end{bmatrix}, \;
			M_1 \= \begin{bmatrix}
				y&&\\ y&y&x\\ &y&x
			\end{bmatrix}, \;
			M_2 \= \begin{bmatrix}
				y&x&\\ y&x&y\\ &&y
			\end{bmatrix}, \;
			M_3 \= \begin{bmatrix}
				x&&\\ x&y&y\\ &y&y
			\end{bmatrix}. 
		\end{equation*}
		That is, if $i_n\equiv j_n\pmod{4}$ for some $j_n\in\{0,1,2,3\}$ and $n>2$, then by the construction of the transition probability $p_x$, since $K(\cdot, \eta)$ is harmonic,
		\begin{equation} \label{eq:transition}
			\begin{bmatrix}
				K(x_{n-1}, \eta)\\K(y_{n-1}, \eta)\\K(z_{n-1}, \eta)
			\end{bmatrix}
			=M_{j_n}
			\begin{bmatrix}
				K(x_{n}, \eta)\\K(y_{n}, \eta)\\K(z_{n}, \eta)
			\end{bmatrix}.
		\end{equation}
		By (\ref{eq:transition}) and properties of each $M_j$, we deduce that $K(x_{n-1},\eta)+K(z_{n-1},\eta)=K(y_{n-1}, \eta)$ for each integer $n>2$. We denote, for each integer $n>1$,
		\begin{equation*}
			\Lambda_n(\eta) \= \frac{K(x_{n}, \eta)}{K(y_{n}, \eta)}.
		\end{equation*}
		Then we denote $z\=x/y$. Equation (\ref{eq:transition}) can be written as
		\begin{equation*}
			\Lambda_{n-1}(\eta) = F_{j_n}(\Lambda_{n}(\eta)), 
		\end{equation*}
		where
		\begin{equation*}
			F_j(t) \= \begin{cases}
				\frac{t+1}{2} &\text{if }j=0,\\
				\frac{yt}{yt+(1-t)x+y} = \frac{t}{(1-z)t+z+1} &\text{if }j=1,\\
				\frac{yt+x}{x+y} = \frac{t+z}{1+z} &\text{if }j=2,\\
				\frac{xt}{y+y(1-t)+xt} = \frac{zt}{(z-1)t+2} &\text{if }j=3.
			\end{cases}
		\end{equation*}

		The condition $x\in(0,2/5)$ implies $z\in(0,2)$. Note that for each $t\in[0,1]$, the derivative of $F_j$ satisfies
		\begin{align*}
			F_0'(t) &= 1/2<1,\\
			F_1'(t) &= (z+1)((1-z)t+(1+z))^{-2}\le \max\bigl\{(z+1) / 4, (1+z)^{-1}\bigr\} <1,\\
			F_2'(t) &= 1/ (1+z) <1,\\
			F_3'(t) &=  2z ((z-1)t+2)^{-2} \le \max\bigl\{z /4, 2z (1+z)^{-2}\bigr\} <1.
		\end{align*}
		Hence, there is a number $\lambda\in(0,1)$ such that for each $t\in[0,1]$ each and $j\in\{0,1,2,3\}$, $F_j'(t)\le \lambda$. It follows that for each interval $I\subseteq[0,1]$, $\abs{F_j(I)}\le \lambda\abs{I}$. By iteration, for each integer $n>2$ and each $m \in \Z_{>0}$, the length of the interval
		\begin{equation*}
			\Absbig{F_{j_n}\circ \cdots \circ F_{j_{n+m}} ([0,1])} \le \lambda^m.
		\end{equation*}
		Let $m\to+\infty$. There is a unique point in the decreasing sequence of closed sets
		\begin{equation*}
			\Lambda_{n-1}(\eta) \in F_{j_n}([0,1]) \supseteq F_{j_n}\circ F_{j_{n+1}}([0,1]) \supseteq \cdots.
		\end{equation*}
		The discussion above also holds for $\zeta$ taking the place of $\eta$. Hence, $\Lambda_{n-1}(\eta) = \Lambda_{n-1}(\zeta)$ for each integer $n>2$. By the definition of $\Lambda$, there is a constant $C_n>0$ for each integer $n>2$ such that
		\begin{equation*}
			K(u, \eta) = C_{\abs{u}} K(u, \zeta)  \qquad\text{for each }u\in\Gamma\text{ with }\abs{u}>1.
		\end{equation*}
		By (\ref{eq:transition}), it is straightforward to show that all of the $C_n$'s are identical. Hence, $K(\cdot, \eta)$ is a multiple of $K(\cdot, \zeta)$. Since $K(o,\zeta)=1=K(o,\eta)$, the two functions are identical. That is, $\zeta=\eta$.

		Finally, combining Case~1 and Case~2, we have proved that if $x\in(0,2/5)$, then $\Phi$ is a bijection. Since a continuous bijection between compact Hausdorff spaces is a homeomorphism, we deduce that $\Phi$ is a homeomorphism when $x\in(0,2/5)$.
	\end{proof}
		
	\begin{rem}
		In fact, for $x=2/5$, we can still prove by a similar method that $p_x$ is a homeomorphism. The proof is a little more complicated because, in step~1, the matrix $M$ is not diagonalizable at the characteristic value $x=2y=2/5$, while in step~2, there is not a uniform bound of $F_j'(t)$. In fact, $F'_3(0)=1$. These obstacles can be bypassed by careful discussions. In step~1, the convergence result of the Martin kernel is still true. In step~2, we can still show by the explicit expression of $F_3$ that for $I$ close to $0$, $\abs{F_3'(I)} \le  \abs{I} \big/ \bigl( 1+2^{-1} \abs{I} \bigr)$, and the iterated length of an interval still converges to $0$. 
	\end{rem}
		
	\begin{rem}
		According to this example, in some cases, when the transition probability is not ``balanced'', some points of the phase space split into several points in the Martin boundary of the tile graph. According to the proof, we can see that, the corresponding harmonic functions $K(\cdot,\xi)$ of these points $\xi\in\Mb\Gamma$ may have different growth rates. Moreover, the difference in the growth rate causes the separation of these points. The failure of the Harnack inequality makes the Green kernels $K(\cdot, u)$ and $K(\cdot, v)$ corresponding to two adjacent vertices $u,v\in\Gamma$ different.
	\end{rem}



\section{Fractal dimension of the harmonic measure} \label{sct:harmonicMeasure}

	This section is devoted to establishing Theorem~\ref{thm:main2}. In this section, we assume that the dynamical system $(X,f)$ satisfies the Assumptions in Subsection~\ref{subsect:expandingDyn}. Let $\Gamma$ be the tile graph associated with $f$ and a fixed Markov partition $\alpha$ with each $A\in\alpha$ connected so that Theorem~\ref{thm:hyp} can be applied. The tile graph $\Gamma$ is equipped with a natural shift map $\sigma$ defined in Subsection~\ref{subsct:visualMetric}. We focus on some basic properties of the random walks on the tile graph $\Gamma$ under the Assumptions in Section~\ref{sct:Introduction}. 
	
	By Assumptions~(D) and~(B) in Section~\ref{sct:Introduction}, $\abs{Z_{n+1}}-\abs{Z_n}$ is i.i.d.~with the distribution of $\abs{Z_1}$. By Assumption~(A) in Section~\ref{sct:Introduction}, $\bE(\abs{Z_1})<+\infty$. Hence, by the law of large numbers for i.i.d.~variables, $\abs{Z_n} /n$ has an almost sure limit $l \= \bE(\abs{Z_1})$. We call $l$ the \defn{asymptotic drift} or the \defn{drift} of the random walk $P$.

	Let $\nu\coloneqq \Phi_*\nu^{\Mb\Gamma}$ be the push-forward of the harmonic measure from the Martin boundary to $X$ by the map $\Phi$ provided in Theorem~\ref{thm:surj}. By abuse of terminology, we also call $\nu$ the harmonic measure if there is no confusion on the domain of $\nu$.

	For a sample path $\omega\in\Omega$, we denote by $Z_n=Z_n(\omega)\in\Gamma$ the vertex of the $n$-th step of the path. The shift map $T\:\Omega\to\Omega$ for the Markov process defined by
	\begin{equation}  \label{def:T}
		Z_n(T\omega) = \sigma^{\abs{Z_1(\omega)}} Z_{n+1}(\omega), \qquad \text{for } n \in \Z_{>0} \text{ and } \omega \in \Omega,
	\end{equation}
	induces a dynamical system on the space of sample paths. By Assumption~(D) in Section~\ref{sct:Introduction}, $T$ is $\bP$-measure-preserving. In fact, $T$ is ergodic by Theorem~\ref{thm:mixing}.

	We put, for each $n\in \Z_{>0}$,
	\begin{subequations}\label{def:approxDim}
		\begin{align}
			g_n(\omega) &\= -\log F(o,Z_{n}),\\
			\tg_n(\omega) &\= -\log F(Z_{N_0},Z_{N_0+n}).
		\end{align}
	\end{subequations}
	
	We will show by an ergodic theorem that $g_n/n$ converges to a constant $l_G$, called the \defn{Green drift}, almost surely, and so does the limit supremum of $f_n/n$. The almost sure limit supremum of $f_n/n$ is related to the packing dimension of the harmonic measure $\dims_P \tnu$.
	
	The following lemma justifies the definition of the Green drift $l_G$, which plays an important role in the dimension formula of the harmonic measure.
	
	\begin{lemma} \label{lem:deflG}
		The sequence of measurable functions $\{g_n/n\}_{n\in\Z_{>0}}$ converges almost surely to some constant $l_G\in\R$.
	\end{lemma}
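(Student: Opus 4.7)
The plan is to apply Kingman's subadditive ergodic theorem to the auxiliary cocycle $\tg_m$, and then transfer the resulting almost sure convergence back to $g_n/n$ by showing that $|g_n - \tg_{n - N_0}|$ stays bounded.

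First, I would view $\tg_m$ as a cocycle over the shift $T$ on $\Omega$. By Assumption~(D) in Section~\ref{sct:Introduction} the map $T$ is $\bP$-measure-preserving, and by Theorem~\ref{thm:mixing} it is ergodic. Iterating the defining identity (\ref{def:T}) gives $Z_n(T^k\omega) = \sigma^{\abs{Z_k(\omega)}} Z_{n+k}(\omega)$, so
\[
\tg_m(T^k\omega) = -\log F\bigl(\sigma^{\abs{Z_k}} Z_{N_0+k},\, \sigma^{\abs{Z_k}} Z_{N_0+m+k}\bigr).
\]
Since every step of the random walk strictly increases the level (Assumption~(B) in Section~\ref{sct:Introduction}), $\abs{Z_{N_0+k}} - \abs{Z_k} \ge N_0$. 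An $\abs{Z_k}$-fold iteration of Lemma~\ref{lem:locIsom}(i) combined with Assumption~(D) then shows that $\sigma^{\abs{Z_k}}$ restricts to a graph isomorphism on $\mho(Z_{N_0+k})$ intertwining the transition probabilities, so it preserves the Green function on that set. Hence $\tg_m \circ T^k = -\log F(Z_{N_0+k}, Z_{N_0+m+k})$.

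Second, I would apply Lemma~\ref{lem:multiplicative} with $v = Z_{N_0}$, $s = Z_{N_0+n}$, $w = Z_{N_0+n+m}$ to obtain
\[
F(Z_{N_0}, Z_{N_0+n+m}) \ge F(Z_{N_0}, Z_{N_0+n})\, F(Z_{N_0+n}, Z_{N_0+n+m}),
\]
which, combined with the identity above, yields the subadditive cocycle inequality $\tg_{n+m} \le \tg_n + \tg_m \circ T^n$. For integrability, Assumption~(A) gives $Z_{N_0+1} \in B(Z_{N_0}, R)$, and the $\sigma$-equivariance of $P$ (Assumption~(D) together with Lemma~\ref{lem:locIsom}) makes the set of realized values of $\hat P(Z_{N_0}, Z_{N_0+1})$ finite; since $F \ge \hat P$, the cocycle $\tg_1$ is bounded. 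Kingman's subadditive ergodic theorem then produces a constant $L \in \R$ with $\tg_n/n \to L$ almost surely.

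Third, I would control the difference between $g_n$ and $\tg_{n-N_0}$ using Lemma~\ref{lem:multiplicative}:
\[
F(o, Z_{N_0})\, F(Z_{N_0}, Z_n) \le F(o, Z_n) \le N_1 \sup_{t \in N(Z_{N_0})} F(o, t)\, F(t, Z_n).
\]
The factor $F(o, Z_{N_0})$ takes only finitely many positive values since, by Assumption~(A), $Z_{N_0}$ ranges over the finite ball $B(o, N_0 R)$, and the variant of the Harnack inequality in Lemma~\ref{lem:Harnack} bounds the ratio $F(t, Z_n)/F(Z_{N_0}, Z_n)$ uniformly for $t \in N(Z_{N_0})$. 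Taking logarithms gives $|g_n - \tg_{n-N_0}| \le C$ for some constant $C$, so $g_n/n \to L$ almost surely, and setting $l_G \coloneqq L$ completes the argument.

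The main obstacle is the shift-invariance step: one must verify rigorously that $\sigma^{\abs{Z_k}}$ preserves $F$ on $\mho(Z_{N_0+k})$. This needs the injectivity of $\sigma$ on shadows from Lemma~\ref{lem:locIsom} iterated $\abs{Z_k}$ times, and the numerical estimate $\abs{Z_{N_0+k}} \ge N_0 + \abs{Z_k} - 1$ required at each iteration is precisely what Assumption~(B) buys us through the $N_0$ strictly increasing steps from time $k$ to time $N_0+k$.
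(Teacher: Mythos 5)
Your first two steps---the identity $\tg_m\circ T^k=-\log F(Z_{N_0+k},Z_{N_0+m+k})$ via Lemma~\ref{lem:locIsom} and Assumption~(D), the subadditivity from Lemma~\ref{lem:multiplicative}, and Kingman's theorem combined with the ergodicity of $T$ from Theorem~\ref{thm:mixing}---follow the paper's proof essentially verbatim and are fine. The gap is in the transfer from $\tg_n$ back to $g_n$. You invoke Lemma~\ref{lem:Harnack} with $u=Z_{N_0}$, $v=t\in N(Z_{N_0})$, and $w=Z_n$ to get $F(t,Z_n)\le C_2\,F(Z_{N_0},Z_n)$, but that lemma is not unconditional: it requires $w$ to satisfy either $\overline{A_{\mho(w)}}\cap A_u\neq\emptyset$ or $\overline{A_{N(w)}}\subseteq B\bigl(A_u,C_3e^{-a\abs{u}}\bigr)$. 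Neither condition is guaranteed along the trajectory. The walk may step to tiles disjoint from $A_{Z_{N_0}}$ (Assumption~(A) only confines each step to a ball of radius $R$ in the graph metric), so $Z_\infty$ can lie at distance of order $C_1e^{-a\abs{Z_{N_0}}}$ from $A_{Z_{N_0}}$, where the constant $C_1$ from Lemma~\ref{lem:shadow1} may well exceed $C_3$; then for all large $n$ the set $\overline{A_{\mho(Z_n)}}$ is a small neighborhood of $Z_\infty$ that misses $A_{Z_{N_0}}$ and is not contained in $B\bigl(A_{Z_{N_0}},C_3e^{-a\abs{Z_{N_0}}}\bigr)$. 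So the uniform Harnack bound you want is simply not available with base vertex $Z_{N_0}$; this failure of the full Harnack inequality for one-sided walks is the whole reason the lemma is stated with those hypotheses.

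This is exactly the difficulty on which the paper spends the bulk of its proof. It first establishes the claim that for every $N$, almost surely there exists $n>N$ with $Z_\infty\in A_{Z_n}$: it constructs the sets $S(u)$ of descendants $v$ with $B\bigl(A_{\mho(v)},e^{-a\abs{v}}\bigr)\subseteq A_u$, shows $S(u)$ is nonempty and that the walk enters $S(Z_n)$ with conditional probability bounded below by some $\epsilon_1>0$ (using the local similarity from Lemma~\ref{lem:locIsom}), and concludes by a Borel--Cantelli argument. With such a random time $N$ in hand, condition~(1) of Lemma~\ref{lem:Harnack} holds for $u=Z_N$ and for $u=o$ because $Z_\infty\in\overline{A_{\mho(Z_n)}}\cap A_{Z_N}\cap A_o$ for all $n>N+N_1$, which yields $F(o,Z_n)\asymp F(Z_N,Z_n)\asymp F(Z_{N_0},Z_n)$ with $\omega$-dependent constants, and hence $g_n/n\to l_G$. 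Your argument needs this recurrence statement (or some substitute supplying a base vertex whose tile contains $Z_\infty$) before the Harnack step can be run; as written, the transfer step does not go through.
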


	The Harnack inequality is a key tool in similar investigations. Note that the random walk we consider is one-sided, and the classical Harnack inequality \emph{does not} hold in our context. Our strategy is to formulate and establish a weaker version of the Harnack inequality as follows.

\subsection{Weak Harnack inequality}

\begin{lemma}[Weak Harnack inequality]\label{lem:Harnack}
	There exist constants $C_3>0$ and $N_1 \in\Z_{>0}$ such that, for each pair of $u,v\in \Gamma$, there is a constant $C_2=C_2(u,v)>1$ with the following property: for each $w\in\Gamma$ with $\abs{w}-N_1\ge \max\{\abs{u},\abs{v}\}$ satisfying either
	\begin{enumerate}[\hspace{2em}(1)]
		\smallskip
		\item $\overline{A_{\mho(w)}}\cap A_u\neq\emptyset$ or
		\smallskip
		\item $\overline{A_{N(w)}}\subseteq B\bigl(A_u, C_3e^{-a\abs{u}}\bigr)$,
	\end{enumerate}
	we have $F(v,w) \le C_2 F(u,w)$.
\end{lemma}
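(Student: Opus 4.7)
My plan is to combine three tools: the multiplicative inequality (Lemma~\ref{lem:multiplicative}), the forward triangle inequality $F(u,w)\geq F(u,s)F(s,w)$ that follows from its left half, and the $\sigma$-invariance of $P$ (Assumption~(D)) applied through the local isomorphism of Lemma~\ref{lem:locIsom}. First I would fix $u,v$ and show that for $N_1$ and $C_3$ large enough, either condition~(1) or~(2) forces $w\in\mho(u)$: by Lemma~\ref{lem:shadow1}, $A_{\mho(w)}$ lies in a $C_1 e^{-a\abs{w}}$-neighborhood of $A_w$, so condition~(1) pins $A_w$ adjacent to $A_u$, and Assumptions~(C) and~(D) then produce a descending path from $u$ to $w$ through adjacent next-level tiles; condition~(2) yields the corresponding statement in the stronger form needed when $A_w$ sits near the boundary of $A_u$.

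Next I would split on $\abs{v}$ versus $\abs{u}$. If $\abs{v}\leq\abs{u}$, Lemma~\ref{lem:multiplicative} applied at $u$ gives
\begin{equation*}
F(v,w) \;\leq\; N_1\sup_{t\in N(u)}F(v,t)F(t,w) \;\leq\; N_1\sup_{t\in N(u)}F(t,w),
\end{equation*}
so it suffices to bound $F(t,w)$ uniformly by a constant multiple of $F(u,w)$ for each of the finitely many $t\in N(u)$. If $\abs{v}>\abs{u}$ and $v\in\mho(u)$, the left inequality of Lemma~\ref{lem:multiplicative} applied with $s=v$ gives $F(u,w)\geq F(u,v)F(v,w)$, so $C_2(u,v)=F(u,v)^{-1}$ works. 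If $\abs{v}>\abs{u}$ but $v\notin\mho(u)$, I would apply Lemma~\ref{lem:multiplicative} at a common ancestor of $u$ and $v$ to reduce to the case $\abs{v}\leq\abs{u}$.

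The heart of the argument is the uniform bound $F(t,w)\leq C(u,t)F(u,w)$ for $t\in N(u)$. When $t\in\mho(u)$, this follows immediately from $F(u,w)\geq F(u,t)F(t,w)$. When $t\in N(u)\setminus\mho(u)$, I would pick $s\in\mho(u)\cap\mho(t)$ (nonempty by the definition of $N(u)$), so that $F(u,w)\geq F(u,s)F(s,w)$, and it remains to control $F(t,w)/F(s,w)$. The main obstacle is obtaining this control uniformly in $w$: conditions~(1) and~(2) must ensure that $w$ lies in $\mho(s)$ for an appropriate $s$, and for $\abs{w}\geq\abs{u}+N_1$ with $N_1$ sufficiently large, Lemma~\ref{lem:locIsom} shows that the local picture of $\mho(t),\mho(s),\mho(u)$ around $w$ is, up to shift, one of finitely many configurations, yielding the uniform constant. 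Condition~(2) is stronger than~(1) precisely to handle the delicate case where $A_w$ approaches the boundary of $A_u$ and the naive choice of $s$ under~(1) alone fails.
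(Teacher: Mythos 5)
Your reduction collapses at the step you yourself identify as ``the heart of the argument'': bounding $F(t,w)\le C(u,t)\,F(u,w)$ uniformly in $w$ for $t\in N(u)\setminus\mho(u)$ (equivalently, controlling $F(t,w)/F(s,w)$ for $s\in\mho(u)\cap\mho(t)$). That sub-claim is itself a Harnack-type inequality between two fixed vertices, i.e.\ a statement of exactly the same nature and difficulty as the lemma being proved, so the argument is circular as written. The appeal to Lemma~\ref{lem:locIsom} does not close the gap: shift-invariance reduces the \emph{pair} $(t,s)$ to finitely many configurations, but the ratio $F(t,w)/F(s,w)$ must be controlled uniformly over the infinitely many admissible $w$, which lie at unbounded depth below $t$ and $s$; the triples $(t,s,w)$ are \emph{not} finite up to shift, so no uniform constant is produced. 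Moreover, by discarding $F(v,t)\le 1$ in your first display you throw away the quantity that actually should be compared: the whole point is to compare $F(v,\cdot)$ with $F(u,\cdot)$ at intermediate points, not to compare $F(t,w)$ with $F(u,w)$ at the endpoint.

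The paper's proof avoids this by placing the finite comparison set at a level determined by $u$ and $v$ rather than near $w$. One first shows (via Lemma~\ref{lem:inShadow} and Corollary~\ref{cor:shadow1}) that condition~(1) implies condition~(2), and that consequently $N(w')\subseteq\mho(u)$, where $w'\=\tau^{\abs{w}-(\max\{\abs{u},\abs{v}\}+N_1)}w$ is the ancestor of $w$ at the \emph{fixed} level $\max\{\abs{u},\abs{v}\}+N_1$. Then $N(w')$ is contained in the finite set $S\=\{y\in\mho(u):\abs{\abs{y}-\max\{\abs{u},\abs{v}\}-N_1}\le R\}$, and one sets $C_2\=\#S\cdot\max\{F(v,y)/F(u,y):y\in S\}$, which is finite because $S$ is finite and $F(u,y)>0$ on $S$. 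Lemma~\ref{lem:multiplicative} applied at $w'$ gives
\begin{equation*}
F(v,w)\le\sum_{x\in N(w')}F(v,x)F(x,w)\le\frac{C_2}{\#S}\sum_{x\in N(w')}F(u,x)F(x,w)\le C_2\,F(u,w),
\end{equation*}
where the last step uses only the one-sided bound $F(u,x)F(x,w)\le F(u,w)$ and hence requires no uniformity in $w$ at all. Your outline would need to be restructured along these lines (no case split on $\abs{v}$ versus $\abs{u}$ is needed, since $\abs{w'}\ge\max\{\abs{u},\abs{v}\}$ makes Lemma~\ref{lem:multiplicative} applicable directly); a final small correction is that condition~(2) is the \emph{weaker} hypothesis implied by condition~(1), not a strengthening of it.
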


	In order to establish Lemma~\ref{lem:Harnack} at the end of this subsection, we first verify the following lemma showing that in general if some tile $w\in\Gamma$, as a subset of $X$, completely lies in a certain neighborhood of another tile $u\in\Gamma$, then $w$ must be in the shadow of $u$.
	\begin{lemma}\label{lem:inShadow}
		There is a number $C_3>0$ such that for all $u, w\in\Gamma$ with $\abs{w}\ge \abs{u}+1$, if $A_w\subseteq B\bigl(A_u, C_3 e^{-a\abs{u}}\bigr)$, then $w\in\mho(u)$.
	\end{lemma}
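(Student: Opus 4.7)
The plan is to choose $C_3>0$ small enough that the containment $A_w\subseteq B(A_u, C_3 e^{-a\abs{u}})$ forces the existence of a chain $u=y_0,y_1,\dots,y_k=w$ with $\abs{y_j}=\abs{u}+j$ and $A_{y_{j-1}}\cap A_{y_j}\neq\emptyset$, so that each transition satisfies $\hP(y_{j-1},y_j)>0$ by Assumption~(C) in Section~\ref{sct:Introduction} and hence $F(u,w)>0$, giving $w\in\mho(u)$.

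Step~1 is to establish the following geometric fact: there is a universal constant $c_0>0$ such that for every $u',v'\in\Gamma$ with $\abs{u'}=m$, $\abs{v'}\in\{m,m+1\}$, $u'\neq v'$, and $A_{u'}\cap A_{v'}=\emptyset$, we have $\rho(A_{u'},A_{v'})\ge c_0 e^{-am}$. I would argue by contradiction: if there were sequences $u'_n,v'_n$ with $m_n=\abs{u'_n}\to\infty$ and $\rho(A_{u'_n},A_{v'_n})e^{am_n}\to 0$, fix a threshold $m_0$ with $C_0 e^{-am_0}<\xi/2$. For $n$ large, the diameter of $A_{u'_n}\cup A_{v'_n}$ is small, and inductively the diameter of $f^i(A_{u'_n}\cup A_{v'_n})$ stays below $\xi$ for all $i\le m_n-m_0$. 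Proposition~\ref{prop:visualLocal} then yields injectivity of $f^{m_n-m_0}$ on $A_{u'_n}\cup A_{v'_n}$ together with the comparability $\rho(f^{m_n-m_0}x,f^{m_n-m_0}y)\asymp e^{a(m_n-m_0)}\rho(x,y)$. The images are the tiles $A_{\sigma^{m_n-m_0}u'_n}$ and $A_{\sigma^{m_n-m_0}v'_n}$ at level $m_0$ or $m_0+1$, still disjoint by injectivity, and the rescaling forces their mutual distance to tend to $0$, contradicting the (positive, finite) minimum distance among disjoint tiles at levels $\le m_0+1$.

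Step~2 carries out the construction. Let $C_0$ be the constant from Proposition~\ref{prop:visualMetric}, and choose $C_3$ so that $C_3+C_0 e^{-a}<c_0$. Given $u,w$ as in the statement with $\abs{u}=m$, pick any $p\in A_w$ so that $\rho(p,A_u)<C_3 e^{-am}$, and set $y_j\=\tau^{\abs{w}-j}w$ for $j\in\{m+1,\dots,\abs{w}\}$; then $y_{\abs{w}}=w$, $A_w\subseteq A_{y_j}$, and $A_{y_{j-1}}\supseteq A_{y_j}$, so the chain $y_{m+1},\dots,y_{\abs{w}}=w$ consists of nested tiles and consecutive pairs intersect. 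Since $p\in A_{y_{m+1}}$ and $\diam A_{y_{m+1}}\le C_0 e^{-a(m+1)}$, we get
\begin{equation*}
A_{y_{m+1}}\subseteq B\bigl(A_u,(C_3+C_0 e^{-a})e^{-am}\bigr),
\end{equation*}
whose radius is $<c_0 e^{-am}$ by the choice of $C_3$. The contrapositive of Step~1 applied to $u$ at level $m$ and $y_{m+1}$ at level $m+1$ forces $A_u\cap A_{y_{m+1}}\neq\emptyset$, so by Assumption~(C), $\hP(u,y_{m+1})>0$. Combined with $\hP(y_{j-1},y_j)>0$ for $j>m+1$ (again by Assumption~(C)), we conclude $F(u,w)>0$, i.e., $w\in\mho(u)$.

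The main obstacle is the inductive verification in Step~1 that $\diam f^i(A_{u'_n}\cup A_{v'_n})<\xi$ for every $i<m_n-m_0$, which is what licenses the repeated application of Proposition~\ref{prop:visualLocal}. The cleanest way to manage this is to track the two contributions to the diameter separately---the tile sizes, bounded above by $C_0 e^{-a(m_n-i)}$ via Proposition~\ref{prop:visualMetric}, and the scaled separation $\asymp e^{ai}\rho(A_{u'_n},A_{v'_n})$---and to fix $m_0$ large enough that the tile-size contribution remains below $\xi/2$, while the separation contribution is forced to be arbitrarily small by the standing hypothesis $\rho(A_{u'_n},A_{v'_n})e^{am_n}\to0$.
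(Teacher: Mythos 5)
Your overall strategy is the same as the paper's: both proofs rest on a uniform lower bound of the form $\rho(A_u,A_v)\ge c_0 e^{-a\abs{u}}$ for tiles $v$ one level below $u$ and disjoint from it, obtained by pushing the configuration down to a bounded level with $\sigma$ and invoking the local similarity of Proposition~\ref{prop:visualLocal}; the paper phrases this as a direct comparison of the quantities $C_3'(u)$ and $C_3'(\sigma^n u)$, while you run a compactness/contradiction argument, but the content is identical. Your Step~2 (descend from $w$ to its level-$(\abs{u}+1)$ ancestor $y_{m+1}=\tau^{\abs{w}-m-1}w$ and chain upward using Assumption~(C)) is also exactly what the paper does, just spelled out in more detail.

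The one step that fails as written is the choice of $C_3$ in Step~2: you require $C_3+C_0e^{-a}<c_0$, but there is no reason for $c_0>C_0e^{-a}$ to hold, and in the basic example (doubling map, dyadic partition, $a$-visual metric) one has $c_0\asymp e^{-a}\le C_0e^{-a}$ because $C_0>1$, so no positive $C_3$ satisfies your inequality. The requirement is also unnecessary: the contrapositive of your Step~1 only needs $\rho(A_u,A_{y_{m+1}})<c_0e^{-a\abs{u}}$, not the containment of the entire tile $A_{y_{m+1}}$ in a $c_0e^{-a\abs{u}}$-neighborhood of $A_u$. Since the point $p\in A_w\subseteq A_{y_{m+1}}$ already satisfies $\rho(p,A_u)<C_3e^{-a\abs{u}}$, taking any $C_3\le c_0$ gives $\rho(A_u,A_{y_{m+1}})\le\rho(p,A_u)<c_0e^{-a\abs{u}}$, hence $A_u\cap A_{y_{m+1}}\neq\emptyset$ and the rest of your chain argument goes through. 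With that correction the proof is complete and matches the paper's.
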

	\begin{proof}
		For each $u\in\Gamma$, put
		\begin{equation}\label{eq:defC3prime}
			C_3'(u) \= e^{a\abs{u}} \inf \{\rho(A_u,A_v) : v\in\Gamma, \abs{v}=\abs{u}+1, d(A_u,A_v)>0\}.
		\end{equation}
		Let $\xi$ be the constant in Proposition~\ref{prop:visualLocal}. Fix $M>0$ sufficiently large such that for all $u,v\in\Gamma$ with $\abs{u}\ge M$ and $d(u,v)\le 2$, 
		\begin{equation}\label{eq:locality1}
			\diam A_u\cup A_v < \xi.
		\end{equation}
		Consider $u,v,u',v'\in\Gamma$ and $n\in\Z_{>0}$ such that $\abs{u}= M$, $d(u',v')\le 2$, $\sigma^n u'=u$, and $\sigma^n v'=v$. For all $x\in A_{u'}$ and $y\in A_{v'}$, by Proposition~\ref{prop:visualLocal} and (\ref{eq:locality1}), we have $\rho \bigl( f^n x, f^n y \bigr) \asymp e^{-an}f(x,y) \ge e^{-an}C_3'(u)$. Hence, there is a constant $D=D(\asymp)>0$ such that $C_3'(u')\ge D \min_{u''\in\Gamma,\abs{u''}\le M}C_3'(u'')$.

		It follows that $C_3'(u)$ has a positive infimum as $u$ ranges over $\Gamma$. We denote the infimum by $C_3>0$. If $A_w\subseteq B\bigl(A_u, C_3 e^{-a\abs{u}}\bigr)$, then, since $\abs{w} \ge \abs{u}+1$, by (\ref{eq:defC3prime}), $A_w\subseteq A_v$ for some $v\in\Gamma$ with $\abs{v}=\abs{u}+1$ and $A_v\cap A_u\neq\emptyset$. Therefore, by the definition (\ref{def:mho}) of $\mho(\cdot)$ and Assumption~(C) in Section~\ref{sct:Introduction}, $w\in\mho(u)$.
	\end{proof}

	\begin{proof}[Proof of Lemma~\ref{lem:Harnack}]
		Let $C_3>0$ be the constant from Lemma~\ref{lem:inShadow}. 
		Fix $u,v\in\Gamma$. Choose an integer $N_1>R$ such that
		\begin{equation} \label{eq:defN1Harnack}
			4C_6e^{-a(N_1-R)}\le C_3,
		\end{equation}
		where the constants $C_6>0$, $C_3>0$, and $R>0$ are from Corollary~\ref{cor:shadow1}, Lemma~\ref{lem:inShadow}, and the definition (\ref{def:N}) of $N(\cdot)$, respectively.
		
		By Corollary~\ref{cor:shadow1}, for all $w\in\Gamma$ and $x\in N(w)$, since $\mho(x)\cap \mho(w)\neq\emptyset$,
		\begin{equation*}
			\diam A_{w}\cup A_x \le \diam A_{\mho(w)} + \diam A_{\mho(x)} < 2C_6e^{-a(\abs{w}-R)}.
		\end{equation*}
		Hence, 
		\begin{equation}\label{eq:ANwneighbor}
			A_{N(w)}\subseteq B\bigl(A_w, 2C_6 e^{-a(\abs{w}-R)}\bigr).
		\end{equation}
		Similarly, we can show that $\overline{A_{\mho(w)}}\cap A_u\neq\emptyset$ implies
		\begin{equation}\label{eq:Awneighbor}
			A_w\subseteq B\bigl(A_u, C_6 e^{-a\abs{w}}\bigr).
		\end{equation}

		Hence, for each $w\in\Gamma$ with $\abs{w}-N_1\ge \max\{\abs{u},\abs{v}\}$  that satisfies condition~(1), by (\ref{eq:ANwneighbor}), (\ref{eq:Awneighbor}), and (\ref{eq:defN1Harnack}), we always have $A_{N(w)}\subseteq B\bigl(A_u, C_3 e^{-a\abs{u}}\bigr)$. That is, condition~(2) holds for $w$. Hence, for each $w\in\Gamma$  with $\abs{w}-N_1\ge \max\{\abs{u},\abs{v}\}$ satisfying either condition~(1) or~(2) of this lemma, by Lemma~\ref{lem:inShadow}, 
		\begin{equation}\label{neighborInShadow}
			N(w)\subseteq \mho(u).
		\end{equation}

		Denote
		\begin{align}
			S &\= \{y\in\mho(u) : \abs{\abs{y}-\max\{\abs{u},\abs{v}\}-N_1}\le R\}  \qquad\text{and} \label{def:S}\\
			C_2 &\= \#S \cdot \max\{ F(v,y) /F(u,y) : y\in S\}. \label{def:C2}
		\end{align}

		Let $\tau\:\Gamma\to \Gamma$ be from Subsection~\ref{subsct:visualMetric}. Put $w'\=\tau^{\abs{w}-(\max\{\abs{u},\abs{v}\}+N_1)} w$. Then it follows from (\ref{neighborInShadow}) that $N(w')\subseteq S$. By Lemma~\ref{lem:multiplicative}, (\ref{def:C2}), and (\ref{def:S}), we have
		\begin{align*}
			F(v,w) 
			&\le \sum_{x\in N(w')}F(v,x)F(x,w)
			\le \frac{C_2}{\# S} \sum_{x\in N(w')}F(u,x)F(x,w)\\
			&\le \frac{C_2}{\# S}\sum_{x\in N(w')} F(u,w)
			\le  C_2F(u,w).  \qedhere
		\end{align*}
	\end{proof}

\subsection{Green drift \texorpdfstring{$l_G$}{lG}}   \label{subsec:lG}

Before establishing Lemma~\ref{lem:deflG}, we first demonstrate some ergodic property of the shift map $T$ defined by (\ref{def:T}).

We denote the \defn{cylinders} in the space of sample paths $\Omega$ by
\begin{equation}
	[u_0,\dots, u_n] \= \{\omega\in\Omega:Z_0(\omega)=u_0, \dots, Z_n(\omega)=u_n\},
\end{equation}
for $n\in\Z_{>0}$ and $u_0,\dots, u_n\in \Gamma$. Recall that by definition, for each $\omega\in\Omega$, $Z_0(\omega)=o$. Hence, if $u_0=o$, then $[u_0,\dots, u_n]=\emptyset$.

\begin{theorem}\label{thm:mixing}
	Let $(X,f)$ be a dynamical system that satisfies the Assumptions in Subsection~\ref{subsect:expandingDyn} and $\Gamma$ be the tile graph associated with $f$ and a Markov partition $\alpha$ of $(X,f)$ from Theorem~\ref{thm:hyp}. Suppose that the transition probability $P$ on $\Gamma$ satisfies the Assumptions in Section~\ref{sct:Introduction}. Then the shift map $T$ is $\bP$-measure-preserving and mixing. 
\end{theorem}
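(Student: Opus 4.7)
The plan is to reduce both measure-preservation and mixing to a single translation-invariance property of the random walk. The key observation is that Assumption~(D) allows the entire Markov chain, viewed after time $n$, to be shifted by $\sigma^{\abs{Z_n}}$ down to the basepoint $o$ without changing its distributional structure; so $T$ behaves essentially like a Bernoulli-type shift on these ``re-based'' increments.

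First I would establish the following lemma: for every $w\in\Gamma$ with $\abs{w}\ge 1$ and every $n\ge 0$, the conditional distribution of $\bigl\{\sigma^{\abs{w}} Z_{n+i}\bigr\}_{i\ge 0}$ given $Z_n=w$ coincides with the distribution of $\{Z_i\}_{i\ge 0}$ under $\bP_o$. To prove this I would first iterate Assumption~(D): by induction on $k$, $P(\sigma^k u) = (\sigma^k)_* P(u)$ for every $u\in\Gamma$ with $\abs{u}\ge 1$ and every $0\le k\le \abs{u}$, the induction being valid because $\sigma^j u \ne o$ whenever $j<\abs{u}$. Setting $Y_i \= \sigma^{\abs{w}} Z_{n+i}$ and applying the Markov property of $\{Z_i\}$, the conditional distribution of $Y_{i+1}$ given $Z_{n+i}=x$ is $(\sigma^{\abs{w}})_* P(x) = P(\sigma^{\abs{w}} x) = P(Y_i)$, which depends only on $Y_i$. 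Hence $\{Y_i\}$ is Markov with transition probability $P$ and initial state $Y_0 = \sigma^{\abs{w}} w = o$, as claimed.

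From this lemma both conclusions follow almost formally. For measure-preservation it suffices to check $\bP(T^{-1}B) = \bP(B)$ on cylinders $B = [o, v_1, \dots, v_k]$: the set $T^{-1}B$ is $\bigl\{\omega : \sigma^{\abs{Z_1}} Z_{1+i} = v_i,\ i=1,\dots,k\bigr\}$, and conditioning on $Z_1$ and applying the lemma at $n=1$ gives $\bP(T^{-1}B)=\bP(B)$. For mixing, take cylinders $A=[o,u_1,\dots,u_m]$ and $B=[o,v_1,\dots,v_k]$. For every $n>m$, conditioning on $Z_n$ and using the Markov property of the original chain to decouple $A$ from the future, together with the lemma, yields
\begin{equation*}
  \bP(A\cap T^{-n}B) \;=\; \sum_{w\in\Gamma} \bP(A\cap\{Z_n=w\})\,\bP(B) \;=\; \bP(A)\,\bP(B).
\end{equation*}
In fact this factorization is \emph{exact} for all $n>m$, not merely asymptotic. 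A standard $\pi$--$\lambda$ (or monotone class) argument then extends the identity to all pairs of measurable sets in $\sigma(Z_0,Z_1,\dots)$, yielding the mixing of $T$.

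The main subtlety, rather than a genuine obstacle, is the bookkeeping around Assumption~(D). Because the commutation $P(\sigma u) = \sigma_* P(u)$ fails at $u=o$, iterating it down to $\sigma^{\abs{w}} w = o$ requires that every intermediate word $\sigma^j w$ with $0\le j<\abs{w}$ be non-root; this is automatic from the level function. Assumption~(B) guarantees $\abs{Z_1}\ge 1$ almost surely, so the random shift $\sigma^{\abs{Z_1}}$ in the definition of $T$ always acts on a word of strictly positive level, and the random amount by which $T$ shifts is exactly compensated so that the post-shifted process re-starts at $o$. This is precisely why the construction of $T$ is a good dynamical model for the $(X,f)$-equivariant random walk.
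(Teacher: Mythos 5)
Your proposal is correct and follows essentially the same route as the paper's proof: both arguments iterate Assumption~(D) together with the Markov property to show that the chain shifted by $\sigma^{\abs{Z_n}}$ restarts at $o$ with law $\bP_o$, which gives the \emph{exact} factorization $\bP(A\cap T^{-n}B)=\bP(A)\,\bP(B)$ on cylinders for large $n$ (the paper carries this out as a direct recursive computation of cylinder probabilities, whereas you package it as a re-basing lemma on conditional distributions, but the substance is identical). The only point worth making explicit is that inside your lemma the identity $(\sigma^{\abs{w}})_*P(x)=P(\sigma^{\abs{w}}x)$ requires $\abs{x}\ge\abs{w}$, which holds for every state $x$ reachable from $Z_n=w$ precisely because Assumption~(B) forces levels to increase along the walk.
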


\begin{proof}
	Let $\Omega$ be the space of all sample paths. By the definition of $T$, for each $m\in\Z_{>0}$ and each cylinder $[u_0,\dots, u_n]\subseteq\Omega$ with $n\in\Z_{>0}$ and $u_0,\dots, u_n\in \Gamma$,
	\begin{equation}
		T^{-m}[u_0,\dots, u_n] = \bigl\{\omega\in\Omega:\sigma^{\abs{Z_m(\omega)}} Z_m(\omega)=u_0, \dots, \sigma^{\abs{Z_m(\omega)}} Z_{n+m}(\omega)=u_n\bigr\}.
	\end{equation}
	Hence, by the Markov property and Assumption~(D) in Section~\ref{sct:Introduction}, for all $k, m\in\Z_{>0}$ and $v_0,\dots, v_m\in \Gamma$,
	\begin{align*}
		& \bP\bigl(T^{-k}[v_0,\dots, v_m]\bigr)\\
		&\qquad= \bP\bigl(\sigma^{\abs{Z_k}}Z_{k}=v_0,\dots, \sigma^{\abs{Z_k}}Z_{k+m}=v_m\bigr)\\
		&\qquad= \sum_{\sigma^{\abs{Z_k}}w_{m-1}=v_{m-1}}\bP\bigl(\sigma^{\abs{Z_k}}Z_{k}=v_0,\dots, \sigma^{\abs{Z_k}}Z_{k+m-2}=v_{m-2}, Z_{k+m-1}=w_{m-1}\bigr) \\ 
		&\qquad\qquad\qquad\qquad\qquad \cdot \sigma_*^{\abs{Z_k}} P(w_{m-1})(\{v_m\})  \\
		&\qquad= \bP\bigl(\sigma^{\abs{Z_k}}Z_{k}=v_0,\dots, \sigma^{\abs{Z_k}}Z_{k+m-1}=v_{m-1}\bigr) P(v_{m-1})(\{v_m\})\\
		&\qquad= \bP\bigl(T^{-k}[v_0,\dots, v_{m-1}]\bigr) \hP(v_{m-1}, v_m).
	\end{align*}
	Applying the equation above recursively, then we get
	\begin{equation*}
		\bP\bigl(T^{-k}[v_0,\dots, v_m]\bigr)
		= \hP(v_0, v_1)\cdots \hP(v_{m-1}, v_m)
		= \bP([v_0,\dots, v_m]).
	\end{equation*}
	That is, $T$ is $\bP$-measure-preserving.
	
	To show that $T$ is mixing, it suffices to show that for all $n,m\in\Z_{>0}$, $u_0,\dots,u_n\in\Gamma$, and $v_0,\dots,v_m\in\Gamma$, 
	\begin{equation}\label{eq:mixing}
		\lim_{k\to+\infty} \bP\bigl([u_0,\dots, u_n]\cap T^{-k}[v_0,\dots, v_m]\bigr) =
		\bP([u_0,\dots, u_n])\bP([v_0,\dots, v_m])
	\end{equation}
	We may assume that $v_0=o$ and $\bP([v_0,\dots, v_m])=\hP(v_0,v_1)\cdots\hP(v_{m-1},v_{m})>0$ because otherwise, both sides of (\ref{eq:mixing}) are zero. Then for each integer $k>n$, by the Markov property and Assumption~(D) in Section~\ref{sct:Introduction},
	\begin{align*}
		&\bP\bigl([u_0,\dots, u_n]\cap T^{-k}[v_0,\dots, v_m]\bigr)\\
		&\qquad= \bP\bigl(Z_0=u_0,\dots, Z_n=u_n, \sigma^{|Z_k|}Z_{k}=v_0,\dots, \sigma^{|Z_k|}Z_{k+m}=v_m\bigr)\\
		&\qquad= \sum_{\sigma^{|Z_k|}w_{m-1}=v_{m-1}} \bP\bigl(Z_0=u_0,\dots, Z_n=u_n, \sigma^{|Z_k|}Z_{k}=v_0,\dots, \\
		&\qquad\qquad\qquad \sigma^{|Z_k|}Z_{k+m-2}=v_{m-2},  Z_{k+m-1}=w_{m-1}\bigr) \sigma_*^{|Z_k|}P(w_{m-1})(\{v_m\})\\
		&\qquad= \bP\bigl(Z_0=u_0,\dots, Z_n=u_n, \sigma^{|Z_k|}Z_{k}=v_0,\dots, \sigma^{|Z_k|}Z_{k+m-1}=v_{m-1}\bigr) \\
		&\qquad\qquad \cdot P(v_{m-1})(\{v_m\})\\
		&\qquad= \bP\bigl([u_0,\dots, u_n]\cap T^{-k}[v_0,\dots, v_{m-1}]\bigr) \hP(v_{m-1}, v_m).
	\end{align*}
	Hence, by applying the equation above recursively, we have
	\begin{align*}
		\bP\bigl([u_0,\dots, u_n]\cap T^{-k}[v_0,\dots, v_m]\bigr)
		&= \bP([u_0,\dots, u_n]) \hP(v_0,v_1)\cdots \hP(v_{m-1}, v_m)\\
		&= \bP([u_0,\dots, u_n]) \bP([v_0,\dots, v_m]).
	\end{align*}
	This proves (\ref{eq:mixing}) and the theorem follows.
\end{proof}

\begin{proof}[Proof of Lemma~\ref{lem:deflG}]
	Let $N_0\in\Z_{>0}$ be the constant in Lemma~\ref{lem:locIsom}. For all $n,m\in\Z_{>0}$ and each sample path $\omega\in\Omega$, by Lemma~\ref{lem:locIsom} and Assumption~(D) in Section~\ref{sct:Introduction},
	\begin{equation*}
		F\bigl(\sigma^{\abs{Z_{n}}}Z_{N_0+n},\sigma^{\abs{Z_{n}}}Z_{N_0+n+m}\bigr) = F(Z_{N_0+n},Z_{N_0+n+m}).
	\end{equation*}
	Hence, by Lemma~\ref{lem:multiplicative} and (\ref{def:approxDim}),
	\begin{align*}
		\tg_n(\omega)+\tg_m(T^n\omega) 
		&= -\log (F(Z_{N_0},Z_{N_0+n})F(Z_{N_0+n},Z_{N_0+n+m}))\\
		&\ge -\log F(Z_{N_0}, Z_{N_0+n+m}) 
		= \tg_{n+m}(\omega).
	\end{align*}
	By Kingman's subadditive ergodic theorem, since $T$ is ergodic by Theorem~\ref{thm:mixing}, $\tg_n/n$ converges almost surely to some constant. Hence, we can define $l_G$ as the almost-sure limit of $\tg_n/n$.

	\smallskip
	\emph{Claim}. For each $N\in\Z_{>0}$, almost surely, there is an integer $n>N$ such that $Z_{\infty}\in A_{Z_{n}}$. 
	\smallskip

	To verify the claim, we denote, for each $u\in\Gamma$, that
	\begin{equation}\label{def:S-1}
		S(u) \= \bigl\{v\in\mho(u): B\bigl(A_{\mho(v)}, e^{-a\abs{v}}\bigr) \subseteq A_{u}\bigr\}.
	\end{equation}
	Then for each $v\in S(u)$, and each $w\in \mho(v)$,
	\begin{equation*}
		B\bigl(A_{\mho(w)}, e^{-a\abs{w}}\bigr) \subseteq B\bigl(A_{\mho(v)}, e^{-a\abs{v}}\bigr) \subseteq A_{u}.
	\end{equation*}
	That is,
	\begin{equation}\label{eq:shadowS}
		S(u) \supseteq \bigcup_{v\in S(u)} \mho(v).
	\end{equation}

	Assume that $u\in\Gamma$. By the properties of Markov partitions, the interior of $A_{u}$ is non-empty. Hence, for each $\xi \in \inter A_{u}$, there is a constant $\delta>0$ such that
	\begin{equation*}
		B(\xi,\delta)\subseteq \inter A_{u}.
	\end{equation*}
	By Lemma~\ref{lem:inShadow}, for each $v\in \Gamma$ with $\xi\in A_v$ and $\abs{v}$ sufficiently large, we have $v\in\mho(u)$. Besides, by Lemma~\ref{cor:shadow1}, if $\abs{v}$ is sufficiently large, then $B\bigl(A_{\mho(v)}, e^{-a\abs{v}}\bigr) \subseteq \inter A_{u}$. It follows that for $v\in \Gamma$ with $\xi\in A_v$ and $\abs{v}$ sufficiently large, $v \in S(u)$. Thus, $S(u)$ is nonempty.

	Let $N_0 \in \Z_{>0}$ be the constant in Lemma~\ref{lem:locIsom}. For each $u\in\Gamma$ with $\abs{u}\le N_0$, choose $v(u)\in S(u)$. By (\ref{eq:shadowS}), for each $v\in S(u)$, $\mho(v)\subseteq S(u)$. By (\ref{def:S-1}), $A_v\subseteq A_u$. Hence, it follows that, after possibly replacing $v(u)$ by vertices in $\mho(v(u))$, we may assume that there is a sufficiently large integer $n_0$ such that $\hP^{(n_0)}(u,v(u))>0$ for each $u\in\Gamma$ with $\abs{u}\le N_0$. Put
	\begin{equation*}
		\epsilon_1 \= \min_{u\in\Gamma, \abs{u}\le N_0}\bigl\{ \hP^{(n_0)}(u,v(u)) \bigr\}.
	\end{equation*}
	It follows from the local similarity of $\Gamma$, i.e., Lemma~\ref{lem:locIsom}, that for each $u\in\Gamma$, there is a vertex 
	\begin{equation}\label{eq:propV}
		v(u) \in S(u)
	\end{equation}
	such that
	\begin{equation}\label{eq:propEps1}
		\hP^{(n_0)}(u,v(u))\ge \epsilon_1.
	\end{equation}
	In fact, we can choose
	\begin{equation*}
		v(u) \= \bigl( \bigl(\sigma^{\abs{u}-N_0} \bigr)\big|_{\mho(u)} \bigr)^{-1} v \bigl( \sigma^{\abs{u}-N_0} u \bigr)
	\end{equation*}
	for all $u\in\Gamma$ with $\abs{u}>N_0$.

	Next, we show that
	\begin{equation}\label{eq:noProbToBound}
		\bP(\exists n,m\in\Z_{\ge 0}, m>n>N, Z_{m} \in S(Z_{n}))=1.
	\end{equation}
	In fact, for each $n\in\Z_{\ge 0}$, by (\ref{eq:propEps1}),
	\begin{equation*}
		\bP(Z_{n+n_0}=v(Z_n)\,|\,\sigma(Z_n))\ge \epsilon_1.
	\end{equation*}
	Here $\sigma(Z_n)$ is the $\sigma$-field generated by $Z_n$. By (\ref{eq:propV}),
	\begin{equation*}
		\bP(Z_{n+n_0}\in S(Z_n)\,|\,\sigma(Z_n))\ge \epsilon_1.
	\end{equation*}
	Hence, by induction, for each $m\in\Z_{>0}$,
	\begin{equation*}
		\bP(\forall k\in\Z_{\ge 0} \text{ with } k<m, Z_{N+(k+1)n_0}\not\in S(Z_{N+kn_0}) ) \le (1- \epsilon_1)^m.
	\end{equation*}
	As $m$ tends to $+\infty$, we have
	\begin{equation*}
		\bP(\forall k\in\Z_{\ge 0}, Z_{N+(k+1)n_0}\not\in S(Z_{N+kn_0}) ) =0.
	\end{equation*}
	Hence, we have proved (\ref{eq:noProbToBound}). Note that by (\ref{def:S-1}), for all integers $m>n>N$, $Z_m(\omega)\in S(Z_n(\omega))$ implies $Z_{\infty}(\omega)\in \overline{A_{\mho(Z_{m}(\omega))}} \subseteq A_{Z_n(\omega)}$. Hence,
	\begin{equation}
		\bP(\exists n\in\Z_{\ge 0}, n>N, Z_{\infty} \in A_{Z_{n}})=1.
	\end{equation}
	The claim follows immediately.

	\smallskip

	We return to the proof of Lemma~\ref{lem:deflG}. Let $N_1$ denote the constant from Lemma~\ref{lem:Harnack}. By the claim, for a.e.~$\omega\in \Omega$, there exists an integer $N>N_0$ such that $Z_{\infty}\in A_{Z_{N}}$. For each integer $n>N+N_1$, since $Z_{\infty}\in\overline{A_{\mho(Z_n)}}\cap A_{Z_{N}}\cap A_{o}$, by applying Lemma~\ref{lem:Harnack} with $(u,v)$ taking the values of $(Z_n,o)$ and $(o,Z_n)$, we have
	\begin{equation*}
		C_2^{-1}F(Z_{N},Z_n) \le F(o,Z_n) \le C_2 F(Z_{N},Z_n).
	\end{equation*}
	It follows that for a.e.~$\omega\in\Omega$,
	\begin{equation*}
		\lim_{n\to+\infty }\frac{\tg_n}{n} =  -\lim_{n\to+\infty }\frac{\log F(Z_{N},Z_n)}{n} = - \lim_{n\to+\infty }\frac{\log F(o,Z_n)}{n} = \lim_{n\to+\infty } \frac{g_n}{n} = l_G .
	\end{equation*}

	The lemma is now established.
\end{proof}

\subsection{Proof of Theorem~\ref{thm:main2}}

	To obtain the dimension of the harmonic measure, we need a lemma about the shadow on the Martin boundary of $(\Gamma,P)$. Recall that the map $\Phi$ defined in Theorem~\ref{thm:surj} pushes forward the harmonic measure on the Martin boundary to the Gromov boundary of $\Gamma$. 

	\begin{lemma}\label{lem:shadowKernel}
		Let $C_3>0$ be the constant in Lemma~\ref{lem:inShadow}. There is a constant $0<C_5<1$ such that for all $u\in\Gamma$ and
		\begin{equation}\label{eq:assumeXiU}
			\xi\in \Phi^{-1} B\bigl(A_u,C_3 e^{-a\abs{u}}/2\bigr),
		\end{equation}
		we have
		\begin{equation*}
			K(u,\xi) \ge C_5\Bigl(\sum_{v\in N(u)} F(o,v)\Bigr)^{-1}.
		\end{equation*}
	\end{lemma}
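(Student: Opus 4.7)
The plan is to approximate $\xi$ on the Martin boundary by vertices $w_n\in\Gamma$, use Lemma~\ref{lem:multiplicative} to decompose $F(o, w_n)$ via transitions through $N(u)$, and then absorb the cross-ratios $F(v, w_n)/F(u, w_n)$ for $v\in N(u)$ into a uniform constant via the weak Harnack inequality of Lemma~\ref{lem:Harnack}.

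First I would pick a sequence $\{w_n\}_{n\in\Z_{>0}}$ in $\Gamma$ with $w_n\to\xi$ in $\Gamma\cup\Mb\Gamma$, so that $K(u,\xi)=\lim_n K(u,w_n)=\lim_n F(u,w_n)/F(o,w_n)$. By Theorem~\ref{thm:surj} and the identification of $\partial\Gamma$ with $X$ from Theorem~\ref{thm:hyp}, the tiles $A_{w_n}$ collapse onto $\{\Phi(\xi)\}$. Combining the hypothesis $\Phi(\xi)\in B(A_u, C_3 e^{-a\abs{u}}/2)$ with Corollary~\ref{cor:shadow1} (which controls $\diam A_{N(w_n)}$), for $n$ sufficiently large both $A_{w_n}\subseteq B(A_u, C_3 e^{-a\abs{u}})$ and $\overline{A_{N(w_n)}}\subseteq B(A_u, C_3 e^{-a\abs{u}})$ hold, and also $\abs{w_n}\ge N_1+\abs{u}+R$. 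Lemma~\ref{lem:inShadow} then gives $w_n\in\mho(u)$, while the second inclusion is exactly condition~(2) of Lemma~\ref{lem:Harnack} applied to each pair $(u,v)$ with $v\in N(u)$.

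Next, Lemma~\ref{lem:multiplicative} with $v$ replaced by $o$ and $w=w_n\in\mho(u)$ yields
\begin{equation*}
F(o,w_n)\le \sum_{v\in N(u)} F(o,v)\, F(v,w_n),
\end{equation*}
and Lemma~\ref{lem:Harnack} supplies constants $C_2(u,v)$ with $F(v,w_n)\le C_2(u,v)F(u,w_n)$ for each $v\in N(u)$. Combining these gives
\begin{equation*}
K(u,w_n)=\frac{F(u,w_n)}{F(o,w_n)} \ge \Bigl(\max_{v\in N(u)} C_2(u,v)\Bigr)^{-1}\Bigl(\sum_{v\in N(u)} F(o,v)\Bigr)^{-1},
\end{equation*}
and passing $n\to\infty$ will conclude the bound once I have a uniform upper bound on $\max_{v\in N(u)} C_2(u,v)$.

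The hard part is establishing such a uniform bound $C^*\=\sup_{u\in\Gamma}\max_{v\in N(u)} C_2(u,v)<+\infty$, since Lemma~\ref{lem:Harnack} asserts $C_2$ only as a function of both vertices. I would exploit the local similarity of the tile graph: for $\abs{u}\ge N_0$, the map $\sigma|_{N(u)}$ from Lemma~\ref{lem:locIsom} is an isomorphism onto $N(\sigma u)$, and Assumption~(D) in Section~\ref{sct:Introduction} together with the injectivity of $\sigma$ on shadows implies $F(u,y)=F(\sigma u,\sigma y)$ for $y\in\mho(u)$; inspecting the explicit formula for $C_2$ in the proof of Lemma~\ref{lem:Harnack} then gives $C_2(u,v)=C_2(\sigma u,\sigma v)$. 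Iterating $\sigma$ reduces the supremum to the finitely many $u$ with $\abs{u}<N_0$, whose constants are manifestly finite. Setting $C_5\=\min\{1/C^*,1/2\}<1$ concludes the proof.
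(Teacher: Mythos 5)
Your proposal is correct and follows essentially the same route as the paper: approximate $\xi$ by vertices $w_n$, use the hypothesis together with the shadow-diameter bounds to verify condition~(2) of Lemma~\ref{lem:Harnack}, decompose $F(o,w_n)$ through $N(u)$ via Lemma~\ref{lem:multiplicative}, and reduce the Harnack constants to finitely many cases by local similarity. If anything, your write-up is slightly more careful than the paper's, since you correctly take the \emph{maximum} of the constants $C_2(u,v)$ over $v\in N(u)$ (the paper's displayed chain writes a minimum, which is a typo) and you explicitly check $w_n\in\mho(u)$ before invoking Lemma~\ref{lem:multiplicative}.
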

	\begin{proof}
		Fix $u\in\Gamma$. Let $N_1$ be the constant in Lemma~\ref{lem:Harnack}. Let $\{w_n\}_{n=1}^{+\infty}$ be a sequence in $\Gamma$ that converges to $\xi$ in the Martin boundary. By the definition of $\Phi$, $w_n$ converges to $\Phi(\xi)$ in the Gromov boundary. Hence, there is an integer $N>0$ such that for each integer $n>N$, $\abs{w_n}\ge \abs{u}+2R+N_1$ and
		\begin{equation}
			A_{w_n}\subseteq B\bigl(\Phi(\xi), 2^{-1} C_3e^{-a\abs{u}} -2C_6e^{-a(\abs{w_n}-R)}\bigr),
		\end{equation}
		where $C_6>0$ is the constant from Corollary~\ref{cor:shadow1}.
		Thus, we can apply (\ref{eq:ANwneighbor}) in Lemma~\ref{lem:Harnack} and (\ref{eq:assumeXiU}), and have
		\begin{equation}
			A_{N(w_n)} \subseteq B\bigl(\Phi(\xi), C_3e^{-a\abs{u}}/2\bigr)\subseteq B\bigl(A_u, C_3e^{-a\abs{u}}\bigr).
		\end{equation}
		Hence, $w_n$ satisfies condition~(2) in Lemma~\ref{lem:Harnack}. It follows from (\ref{def:K}), Lemmas~\ref{lem:multiplicative}, and~\ref{lem:Harnack} that
		\begin{align*}
			K(u,w_n) &= F(u,w_n) / F(o,w_n)
			\ge \frac{F(u,w_n)}{\sum_{v\in N(u)}F(o,v)F(v,w_n)}\\
			&\ge \frac{F(u,w_n)}{\sum_{v\in N(u)}F(o,v)C_2(u,v) F(u,w_n)}
			\ge  \Bigl(\min_{v\in N(u)} \{ C_2(u,v) \} \sum_{v\in N(u)} F(o,v)\Bigr)^{-1}.
		\end{align*}

		Recall that by (\ref{def:C2}), it follows from the local similarity, i.e., Lemma~\ref{lem:locIsom} and Assumption~(D) in Section~\ref{sct:Introduction}, that
		\begin{align*}
			C_5 \= \min&\{C_2(u,v):v\in N(u), u\in\Gamma, \abs{u}\le N_0\} \\
			       = \min&\{C_2(u,v):v\in N(u), u\in\Gamma\}. 
		\end{align*}
		Hence, for each $u\in\Gamma$,
		\begin{equation*}
			K(u,w_n) \ge \frac{C_5}{\sum_{v\in N(u)} F(o,v)}.
		\end{equation*}
		As the limit of $K(u,w_n)$, therefore, $K(u,\xi)$ also satisfies this inequality. 
	\end{proof}

	Put
	\begin{equation*}
		\epsilon_0 \= \min_{\hP(u,v)>0, \abs{u}\le N_0}\{\hP(u,v)\}.
	\end{equation*}
	It follows from the local similarity, i.e., Lemma~\ref{lem:locIsom} and Assumption~(D) in Section~\ref{sct:Introduction}, that for each $u,v\in\Gamma$, $\hP^{(k)}(u,v)>0$ implies
	\begin{equation}\label{def:eps0}
		\hP^{(k)}(u,v)\ge \epsilon_0^k.
	\end{equation}
	
	Now we can begin the calculation of the dimension of the harmonic measure.

	\begin{proof}[Proof of Theorem~\ref{thm:main2}]

		Let $C_6>0, C_1>0, C_3>0$ and $C_5>0$ be the constants in Corollary~\ref{cor:shadow1} and Lemmas~\ref{lem:shadow1} and~\ref{lem:shadowKernel}. 

		\smallskip

		We claim that there is a constant $k_1\in\Z_{>0}$ such that for each $u\in \Gamma$, there is a vertex $v_1=v_1(u)\in \Gamma$ with $\hP^{(k_1)}(u,v_1)>0$ such that
		\begin{equation} \label{eq:defv1}
			B\bigl(A_{\mho(v_1(u))}, C_3 e^{-a\abs{u}}/4 \bigr)\subseteq B\bigl(A_u,C_3 e^{-a\abs{u}}/2\bigr).
		\end{equation}

		In fact, choose $k_1\in\Z_{>0}$ such that $C_1e^{-ak_1} \le C_3/4$. For each $u\in\Gamma$, there is a vertex $v_1\in\Gamma$ with $A_{v_1}\subseteq A_u$ and $\abs{v_1}=\abs{u}+k_1$. By Assumption~(C) in Section~\ref{sct:Introduction}, $\hP^{(k_1)}(u,v_1)>0$. By Lemma~\ref{lem:shadow1},
		\begin{equation*}
			A_{\mho(v_1)}\subseteq B\bigl(A_{v_1}, C_1e^{-a\abs{v_1}}\bigr)\subseteq B\bigl(A_u, C_3 e^{-a\abs{u}}/4\bigr).
		\end{equation*}
		This proves the claim.

		\smallskip

		Put
		\begin{equation}
			k_0 \= \max \bigl\{ \min_{w\in \mho(u)\cap \mho(v)}\{\abs{w}-\abs{u}\} : u,v\in\Gamma,\abs{u}\le N_0+R, v\in N(u)\bigr\}.
		\end{equation}
		It follows from the local similarity of $\Gamma$, i.e., Lemma~\ref{lem:locIsom}, that the condition $\abs{u}\le N_0+R$ can be removed from the definition of $k_0$, i.e.,
		\begin{equation}\label{def:k0}
			k_0 = \max \bigl\{ \min_{w\in \mho(u)\cap \mho(v)}\{\abs{w}-\abs{u}\} : u,v\in\Gamma, v\in N(u) \bigr\}.
		\end{equation}
		That is to say, for all $u,v\in\Gamma$ with $v\in N(u)$, there is a vertex $w\in \mho(u)\cap \mho(v)$ such that $\abs{w}\le \abs{u}+k_0$. Moreover, by Assumption~(B) in Section~\ref{sct:Introduction}, there exists an integer $0\le i\le k_0$ such that $\hP^{(i)}(u, w)>0$. Hence, there is also a vertex $w'\in \mho(w)\subseteq\mho(u)\cap \mho(v)$ such that 
		\begin{equation}\label{eq:propk0}
			\hP^{(k_0)}(u,w')>0.
		\end{equation}

		For all $u,v,w\in\Gamma$ such that $v\in N(u)$, $w\in\mho(u)\cap \mho(v)$, and $\hP^{(k_0)}(u, w)>0$, we have $\abs{w} \le \abs{u}+k_0R \le \abs{v}+k_0R+R$. Hence, by Assumption~(B) in Section~\ref{sct:Introduction}, there exists $i\in\Z$ with $0\le i\le k_0R+R$ such that $\hP^{(i)}(v,w)>0$. By the definition of $F$ and (\ref{def:eps0}), for such $u,v,w\in\Gamma$,
		\begin{equation}\label{eq:GreenLowerBound}
			F(v,w)\ge \hP^{(i)}(v,w)\ge \epsilon_0^{i} \ge \epsilon_0^{k_0R+R}.
		\end{equation}

		For each $u\in\Gamma$, choose a vertex $v(u)\in N(u)$ such that
		\begin{equation}\label{eq:defv}
			F(o,v(u))=\sup_{w\in N(u)}F(o,w).
		\end{equation}
		By (\ref{eq:propk0}), there is a vertex $v_0(u)\in \Gamma$ such that $\hP^{(k_0)}(u,v_0(u))>0$ and $v_0(u)\in\mho(v(u))$. Let $v_1(u)$ be chosen as in the claim.

		Consider the events 
		\begin{equation*}
			B_n \= \{ \omega\in\Omega : Z_{n+k_0}=v_0(Z_n)\}, \quad n\ge N_0,
		\end{equation*}
		\begin{equation*}
			B_n^1 \= \{ \omega\in\Omega : Z_{n+k_1} = v_1(Z_n) \}, \quad n\ge N_0.
		\end{equation*}
		Then for each integer $n\ge N_0$, by the definitions of $v_0$ and $v_1$, we have
		\begin{equation*}
			\bP(B_n \,|\,\sigma(Z_n)) >0 \quad \text{ and } \quad \bP \bigl(B^1_n \,\big|\,\sigma(Z_n) \bigr) >0.
		\end{equation*}
		By (\ref{def:eps0}),
		\begin{equation*}
			\bP(B_n \,|\,\sigma(Z_n)) \ge \epsilon_0^{k_0} \quad \text{ and } \quad \bP \bigl(B^1_n \,\big|\,\sigma(Z_n) \bigr) \ge \epsilon_0^{k_1}.
		\end{equation*}
		Hence, by the Markov property,
		\begin{equation}\label{ieq:BcapB1PositiveProb}
			\bP(B_n\cap B^1_{n+k_0}\,|\,\sigma(Z_n)) \ge \epsilon_0^{k_0+k_1}.
		\end{equation}

		By the assumption~(B) in Section~\ref{sct:Introduction}, for each sample path $\{Z_n(\omega)\}_{n\in\Z_{\ge 0}}$ and each pair of vertices $w_1,w_2\in\Gamma$ on the same level, (i.e., $\abs{w_1}=\abs{w_2}$), if $Z_n(\omega)=w_1$ for some $n\in\Z_\ge 0$, then $Z_{n'}(\omega)=w_2$ cannot hold for all $n'\in\Z_\ge 0$. Hence, for each $w,v\in\Gamma$, by the definition (\ref{def:N}) of $N(\cdot)$,
		\begin{equation}\label{eq:kernelUpperBound0}
			\sum_{w'\in N(v)} F(w,w') \le \sum_{k=\abs{v}-R}^{\abs{v}+R} \sum_{v\in N(w'), \abs{w'}=v} F(w,w')\le \sum_{k=\abs{v}-R}^{\abs{v}+R}1 = 2R+1.
		\end{equation}

		Hence, for each integer $n\ge N_0$, each $\omega\in B_{n+k_0}^1$, and each $\xi\in B\bigl(Z_{\infty}, C_3e^{-a\abs{Z_{n+k_0}}}/4\bigr)$, by (\ref{eq:kernelUpperBound0}) and Lemmas~\ref{lem:shadowKernel} and~\ref{lem:multiplicative},
		\begin{align}\label{eq:kernelUpperBound1}
			K(Z_{n+k_0}, \xi)^{-1} &\le C_5^{-1}\sum_{w\in N(Z_{n+k_0})}F(o,w)\notag\\
			&\lesssim \sum_{w\in N(Z_{n})}\sum_{w'\in N(Z_{n+k_0})} F(o,w)F(w,w') 
			\lesssim \sum_{w\in N(Z_{n})} F(o,w).
		\end{align}
		For each integer $n\ge N_0$ and each $\omega\in B_n$, by Lemma~\ref{lem:multiplicative}, (\ref{eq:defv}), and (\ref{eq:GreenLowerBound}),
		\begin{align*}
			\sum_{w\in N(Z_n)} F(o,w) 
			&\le  (\# N(Z_n))F(o,v(Z_n)) \\
			&\le \frac{\# N(Z_n)}{F(v(Z_n),Z_{n+k_0})}F(o,Z_{n+k_0}) 
			\le \frac{\# N(Z_n)}{\epsilon_0^{k_0R+R}}F(o,Z_{n+k_0}).
		\end{align*}
		Combining the inequality above with (\ref{eq:kernelUpperBound1}), we deduce that, for all integer $n\ge N_0$, $\omega\in B_n\cap B^1_{n+k_0}$, and $\xi\in \Phi^{-1} \bigl( B\bigl(Z_{\infty}(\omega), C_3e^{-a\abs{Z_{n+k_0}}}/4\bigr) \bigr)$,
		\begin{equation}\label{eq:kernelUpperBound2}
			K(Z_{n+k_0}, \xi) \gtrsim F(o,Z_{n+k_0})^{-1}.
		\end{equation}

		Recall that $\nu = \Phi_*\nu^{\Mb\Gamma}$ is the push-forward of the harmonic measure from the Martin boundary to $X$ by the map $\Phi$ provided in Theorem~\ref{thm:surj}. 
		Recall the formula (\ref{eq:harmonicBaseChange}) about the change of basepoint of the harmonic measure:
		\begin{equation*}
			K(u,\cdot) = \frac{\mathrm{d}\nu_u^{\Mb\Gamma}}{\mathrm{d}\nu^{\Mb\Gamma}}.
		\end{equation*}
		For each integer $n\ge N_0$ and each sample path $\omega\in B_n\cap B^1_{n+k_0}$, put $u\=Z_n(\omega)$. Then by (\ref{eq:kernelUpperBound2}), with $R_{n,\omega} \= C_3 e^{-a\abs{Z_{n+k_0}}}/4$, we have
		\begin{equation}\label{eq:hMeasureUpperBound}
			1 
			= \int_{\partial_M\Gamma} \!\, \mathrm{d} \nu_u^{\Mb\Gamma}(\xi)  
			\ge \int_{\Phi^{-1}\! B (Z_{\infty},	R_{n,\omega}  )}
				K(u,\xi) \, \mathrm{d} \nu^{\Mb\Gamma}(\xi)   
			\gtrsim \frac{\nu (B (	Z_{\infty}, R_{n,\omega}  ) )	}{F(o,Z_{n+k_0})}.  
		\end{equation}

		By (\ref{ieq:BcapB1PositiveProb}) and inductively by the Markov property, 
		\begin{equation*}
			\bP \bigl(\forall 0\le m\le n, \omega\not\in B_{m(k_0+k_1)}\cap B^1_{m(k_0+k_1)+k_0} \bigr) \le \bigl(1-\epsilon_0^{k_0+k_1} \bigr)^n.
		\end{equation*}
		Letting $n\to+\infty$, we get
		\begin{equation*}
			\bP\bigl(B_m\cap B^1_{m+k_0}~\text{i.o.}\bigr) 
			= \lim_{n\to+\infty} \bP\biggl(\bigcup_{m=n}^{+\infty}B_m\cap B^1_{m+k_0}\biggr) 
			= 1.
		\end{equation*}
		That is, $\omega\in B_n\cap B^1_{n+k_0}$ infinitely often for a.e.~$\omega\in\Omega$.
		
		Hence, for a.e.~$\omega\in\Omega$, there is an infinite sequence $\{n_i\}_{i\in\Z_{>0}}$ such that $\omega\in B_{n_i} \cap B^1_{n_i+k_0}$. Thus, for a.e.~$\omega\in \Omega$, by (\ref{eq:hMeasureUpperBound}) and Lemma~\ref{lem:deflG},
		\begin{equation}\label{eq:hMdim1}
			\lim_{i\to+\infty}\frac{-\log(\nu(B(Z_{\infty}, R_{n_i,\omega} ))) }{n_i}
			\ge \lim_{i\to+\infty}\frac{-\log F(o,Z_{n_i+k_0})}{n_i}
			= l_G.
		\end{equation}

		Conversely, by Corollary~\ref{cor:shadow1}, if $\xi\in \Phi^{-1}\overline{A_{\mho(u)}}$, then $A_{\mho(u)}\subseteq B\bigl(\xi, C_6 e^{-a\abs{u}}\bigr)$. Thus,
		\begin{equation*}
				\nu\bigl(B\bigl(\xi,  C_6 e^{-a\abs{u}}\bigr)\bigr)
				\ge \nu(A_{\mho(u)})
				= \nu^{\Mb\Gamma} \bigl(\Phi^{-1}A_{\mho(u)} \bigr)
				= \int \! K(u,\xi)^{-1} \, \mathrm{d} \nu_u^{\Mb\Gamma}(\xi)
				\ge F(o,u).
		\end{equation*}
		Combine with Lemma~\ref{lem:deflG}, we have, for a.e.~$\omega\in\Omega$,  
		\begin{equation}\label{eq:hMdim2}
			\limsup_{n\to+\infty}\frac{ -\log\bigl(\nu\bigl(B\bigl(Z_{\infty},  C_6 e^{-a\abs{Z_n}}\bigr)\bigr)\bigr)}{n} 
			\le	\lim_{n\to+\infty} \frac{-\log(F(o,Z_n))}{n} 
			= l_G.
		\end{equation}

		Since $\nu$ is the projection of the probability measure $\bP$ by the measurable function $Z_{\infty}$, for $\nu$-a.e.~$\xi\in X$, we can find $\omega\in\Omega$ such that $\omega\in B_n\cap B^1_{n+k_0}$, i.o., and $Z_{\infty}(\omega)=\xi$. Hence, combining (\ref{eq:hMdim2}) with (\ref{eq:hMdim1}), for $\nu$-a.e.~$\xi\in X$, choosing such $\omega$, we have
		\begin{equation*}
			\limsup_{n\to+\infty}\frac{ -\log\bigl(\nu\bigl(B\bigl(Z_{\infty}, e^{-a\abs{Z_n}}\bigr)\bigr)\bigr)}{n} = l_G.
		\end{equation*}
		Recall that $\lim_{n\to+\infty} \frac{\abs{Z_n}}{n}=l$. So we have
		\begin{equation*}
			\limsup_{r\to 0} \frac{\log \nu(B(\xi, r))}{\log r} = \frac{l_G}{al}.
		\end{equation*}
		It follows by the properties of fractal dimensions (see for example, \cite[Theorem~8.6.5 and the last paragraph in Subsection~8.4]{PU10}) that the packing dimension of $\nu$ is $\frac{l_G}{al}$. This completes the proof of Theorem~\ref{thm:main2}.
	\end{proof}

\section{Quasi-invariance of the harmonic measure}  \label{sct:quasi-invariance}

In this section, we demonstrate the quasi-invariance of the harmonic measure in Theorem~\ref{thm:main3}. We assume that the dynamical system $(X,f)$ satisfies the Assumptions in Subsection~\ref{subsect:expandingDyn}. Let $\Gamma$ be the tile graph associated with $f$ and a fixed Markov partition $\alpha$ with each $A\in\alpha$ connected so that Theorem~\ref{thm:hyp} can be applied. The tile graph $\Gamma$ is equipped with a natural shift map $\sigma$ defined in Subsection~\ref{subsct:visualMetric}. We study the the random walks on the tile graph $\Gamma$ under the Assumptions in Section~\ref{sct:Introduction}. 

	For each $u\in\Gamma$, put $\nu_u \= \Phi_*\nu_u^{\Mb\Gamma}$. Then we have the following lemma.
	
	\begin{lemma}\label{lem:harm}
		For each vertex $u\in\Gamma$,
		\begin{equation}  \label{eq:harmInduct}
			\nu_u = \sum_{w\in\Gamma} \hP(u,w) \nu_w,
		\end{equation}
		and if $u\neq o$, we have 
		\begin{equation}\label{eq:harmShiftInv}
			\nu_{\sigma u} = f_*\nu_u.
		\end{equation}
	\end{lemma}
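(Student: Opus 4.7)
The plan is to derive both identities from the Markov-property structure of the walk, pushed forward from the Martin boundary to $X$ via $\Phi$.

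For (\ref{eq:harmInduct}), I would run a one-step decomposition of the Martin-boundary harmonic measure. The walk is transient (it strictly increases level by Assumption~(B)), so for every starting vertex $v$ the trajectory converges $\bP_v$-a.s.\ to a limit $Z_\infty\in\Mb\Gamma$ with distribution $\nu_v^{\Mb\Gamma}$. Conditioning on $Z_1$ and applying the Markov property,
\[
\nu_u^{\Mb\Gamma}(A)=\sum_{w\in\Gamma}\bP_u(Z_1=w)\,\bP_w(Z_\infty\in A)=\sum_{w\in\Gamma}\hP(u,w)\,\nu_w^{\Mb\Gamma}(A)
\]
for every Borel $A\subseteq\Mb\Gamma$; pushing this forward by $\Phi$ yields (\ref{eq:harmInduct}).

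For (\ref{eq:harmShiftInv}), the strategy is to exhibit the shifted process as a random walk again. Fix $u\in\Gamma\setminus\{o\}$ and set $Y_n\=\sigma Z_n$. By Assumption~(B), $\abs{Z_n}\geq\abs{u}+n\geq 1$, so $Z_n\neq o$ for every $n$ and Assumption~(D) is applicable at every step. A tower-property computation of the same flavour as in the proof of Theorem~\ref{thm:mixing} then gives $\bP(Y_{n+1}=y\mid Y_0,\dots,Y_n)=\hP(Y_n,y)$, so $\{Y_n\}$ is a Markov chain with transition kernel $\hP$ started at $Y_0=\sigma u$, and hence $Y_n$ converges a.s.\ to a limit $Y_\infty\in\Mb\Gamma$ whose distribution is $\nu_{\sigma u}^{\Mb\Gamma}$.

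To transfer this identity to $X$, I would use that $\Phi$ intertwines $\sigma$ and $f$ at infinity. For any sequence $\{z_n\}\subseteq\Gamma$ converging to $\xi\in\Mb\Gamma$, naturality of $\Phi$ together with Proposition~\ref{prop:visualMetric} gives $A_{z_n}\to\{\Phi(\xi)\}$ in Hausdorff distance. Once $\abs{z_n}\geq 2$, the definition (\ref{def:sigma}) of $\sigma$ gives $A_{\sigma z_n}=f(A_{z_n})$, and the uniform continuity of $f$ on the compact space $X$ then yields $A_{\sigma z_n}\to\{f(\Phi(\xi))\}$, that is, $\sigma z_n\to f(\Phi(\xi))$ in the Gromov boundary. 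Applied sample-path-wise to $z_n=Z_n$, this gives $\Phi(Y_\infty)=f(\Phi(Z_\infty))$ $\bP_u$-almost surely, so
\[
\nu_{\sigma u}=\Phi_*\nu_{\sigma u}^{\Mb\Gamma}=f_*\Phi_*\nu_u^{\Mb\Gamma}=f_*\nu_u.
\]
The only subtle point I anticipate is the fibre-averaging step that promotes the single-step identity in Assumption~(D) to the full Markov property of $\{Y_n\}$; everything else is a formal consequence of the compatibility $A_{\sigma v}=f(A_v)$ (valid for $\abs{v}\geq 2$) and continuity of $f$.
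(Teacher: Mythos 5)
Your proof of (\ref{eq:harmInduct}) is the same first-step decomposition the paper uses, so there is nothing to add there. For (\ref{eq:harmShiftInv}) your argument is correct but follows a genuinely different route. The paper splits into two cases: for $\abs{u}\ge N_0$ it matches cylinder probabilities $\bP_u([u_0,\dots,u_n])=\bP_{\sigma u}([\sigma u_0,\dots,\sigma u_n])$ term by term, which requires $\hP(u_i,u_{i+1})=\hP(\sigma u_i,\sigma u_{i+1})$ and hence the local injectivity of $\sigma$ on shadows from Lemma~\ref{lem:locIsom}; for $1\le\abs{u}<N_0$ it then runs a downward induction on the level using (\ref{eq:harmInduct}) and Assumption~(D). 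You instead prove directly that $Y_n\=\sigma Z_n$ is a Markov chain with kernel $\hP$ started at $\sigma u$ for \emph{every} $u\neq o$: the conditional law of $Y_{n+1}$ given $Z_0,\dots,Z_n$ is $\sum_{\sigma w=y}\hP(Z_n,w)=\hP(\sigma Z_n,y)$ by Assumption~(D), and since this depends only on $Y_n$ the tower property closes the argument. The fibre sum is exactly what Assumption~(D) provides, so non-injectivity of $\sigma$ is harmless and neither the threshold $N_0$ nor the induction is needed --- your proof is shorter and slightly more general in that it does not invoke Lemma~\ref{lem:locIsom} at all. The boundary-intertwining step $\Phi(Y_\infty)=f(\Phi(Z_\infty))$ that you rederive from $A_{\sigma v}=f(A_v)$ and continuity of $f$ is precisely Corollary~\ref{cor:sigmaExtendF}, which the paper cites at the corresponding point; the identification of the law of $Y_\infty$ with $\nu_{\sigma u}^{\Mb\Gamma}$ is legitimate because the Martin-boundary limit is a measurable function of the whole path and the two processes have the same law. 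I see no gap.
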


	\begin{proof}
		For each sample path $\omega\in\Omega$, we denote by $Z_{\infty}^{X}(\omega)$ the limit of $\{Z_n(\omega)\}$ in $X$.
		For each $u\in\Gamma$, by the definition (\ref{eq:defHarm}) of $\nu_u^{\Mb\Gamma}$ and the definition of $\Phi$ in Theorem~\ref{thm:main1}, for each Borel subset $A\subseteq X$,
		\begin{equation}  \label{eq:defHarmGromov}
			\nu_u(A) = \bP_u \bigl( Z_{\infty}^{X}\in A \bigr).
		\end{equation}
		Hence, for each $u\in\Gamma$, $\nu_u = \sum_{w\in\Gamma} \hP(u,w) \nu_w$, establishing (\ref{eq:harmInduct}).
	
		Let $N_0\in\Z_{>0}$ be the constant from Lemma~\ref{lem:locIsom}. Fix $u\in\Gamma$ with $\abs{u}\ge N_0$. By Assumption~(D) in Section~\ref{sct:Introduction} and Lemma~\ref{lem:locIsom}, for each cylinder $[u_0,\dots, u_n]\subseteq\Omega$ with $n\in\Z_{>0}$ and $u_0=u,u_1,\dots, u_n\in \Gamma$, we have
		\begin{align*}
			\bP_u([u_0,\dots, u_n]) 
			&= \hP(u_0,u_1)\cdots\hP(u_{n-1},u_n) \\
			&= \hP(\sigma u_0,\sigma u_1)\cdots\hP(\sigma u_{n-1},\sigma u_n) 
			= \bP_{\sigma u}([\sigma u_0,\dots, \sigma u_n]).
		\end{align*}
		For each $\omega\in\Omega$, we define $\sigma_* \omega$ as the sample path satisfying $Z_n(\sigma_*\omega) = \sigma Z_n(\omega)$ for each $n\in\Z_{\ge 0}$. Hence, for each measurable subset $A\subseteq \Omega$,
		\begin{equation*}
			\bP_{\sigma u}( A) = \bP_u(\sigma_*\omega \in A).
		\end{equation*}
		By Corollary~\ref{cor:sigmaExtendF} and the definition of $\sigma_*$, $Z_{\infty}^{X}(\sigma_*\omega) = f \bigl( Z_{\infty}^{X}(\omega) \bigr)$. Hence, for each Borel subset $B\subseteq X$,
		\begin{equation*}
			\bP_{\sigma u} \bigl( Z_{\infty}^{X}\in B \bigr) = \bP_u \bigl( Z_{\infty}^{X}(\sigma_*\omega) \in B \bigr).
		\end{equation*} 
		By (\ref{eq:defHarmGromov}), for each Borel subset $B\subseteq X$,
		\begin{equation*}
			\nu_{\sigma u}(B) 
			= \bP_{\sigma u} \bigl( Z_{\infty}^{X}(\omega)\in B \bigr) 
			= \bP_u \bigl( f Z_{\infty}^{X}(\omega)\in B \bigr) 
			= \nu_u \bigl( f^{-1}B \bigr) 
			= (f_*\nu_u)(B).
		\end{equation*}
		That is, $\nu_{\sigma u} = f_*\nu_u$ for all $u\in\Gamma$ with $\abs{u}\ge N_0$.
		
		For $u\in\Gamma$ with $\abs{u}\le N_0$, we prove (\ref{eq:harmShiftInv}) by induction. Suppose that for some integer $n > 0$, (\ref{eq:harmShiftInv}) holds for each $u\in\Gamma$ with $\abs{u}> n$. Fix $u\in\Gamma$ with $\abs{u}= n$. By Assumption~(B) in Section~\ref{sct:Introduction} and the inductive hypothesis, (\ref{eq:harmShiftInv}) holds for all $w'\in\Gamma$ with $\hP(u,w')>0$. By (\ref{eq:harmInduct}),
		\begin{align*}
			\nu_{\sigma u} &= \sum_{w\in\Gamma} \hP(\sigma u,w) \nu_w
			= \sum_{w,w'\in\Gamma, \sigma w'=w} \hP(u,w') \nu_{w}\\
			&= \sum_{w'\in\Gamma} \hP(u,w') \nu_{\sigma w'}
			= \sum_{w'\in\Gamma} \hP(u,w') f_*\nu_{w'}
			= f_*\nu_u.
		\end{align*}
		Therefore, (\ref{eq:harmShiftInv}) holds for all $u\in\Gamma$ with $u\neq o$.
	\end{proof}

	\begin{proof}[Proof of Theorem~\ref{thm:main3}]
		By Lemma~\ref{lem:harm},
		\begin{equation}\label{eq:decomposeHarm}
			f_*\nu = \sum_{w\in\Gamma, \abs{w}=1} \hP(o,w) \nu + \sum_{w\in\Gamma, \abs{w}>1} \hP(o,w) \nu_{\sigma w}.
		\end{equation}
		By Assumption~(C) in Section~\ref{sct:Introduction}, for each $w\in\Gamma$ with $\abs{w}=1$, $\hP(o,w)>0$. Hence, $\nu$ is absolutely continuous with respect to $f_*\nu$ with Radon--Nikodym derivative
		\begin{equation*}
			\frac{\rd \nu}{\rd (f_*\nu)} \leq \Bigl(\sum_{w\in\Gamma, \abs{w}=1} \hP(o,w)\Bigr)^{-1}.
		\end{equation*}
		Conversely, by induction on (\ref{eq:harmInduct}) Lemma~\ref{lem:harm}, for each $n\in\Z_{>0}$, we have
		\begin{equation*}
			\nu = \sum_{w\in\Gamma} \hP^{(n)}(o,w) \nu_w.
		\end{equation*}
		It follows from Assumption~(C) in Section~\ref{sct:Introduction} that for each $w\in\Gamma$, $\hP^{(\abs{w})}(o,w)>0$. Thus, $\nu_w$ is absolutely continuous to $\nu$ with $\frac{\rd \nu_w}{\rd \nu} \leq \frac{1}{ \hP^{(w)}(o,w)}$. Hence, by (\ref{eq:decomposeHarm}), as the sum of some $\nu_w$'s, $f_*\nu$ is absolutely continuous to $\nu$ with Radon--Nikodym derivative
		\begin{equation*}
			\frac{\rd (f_*\nu)}{\rd \nu} \leq \sum_{w\in\Gamma,\hP(o,w)>0}   \frac{\hP(o,w)}{\hP^{(\abs{\sigma w})}(o,\sigma w)}.
		\end{equation*}
		The theorem follows.
	\end{proof}

	\appendix

	\section{The tile graph and visual metrics}

	The main goal of this appendix is to prove Propositions~\ref{prop:hyp}. The proof mainly follows the ideas in \cite[Chapter~3]{HP09} and \cite[Chapters~8 and 10]{BM17}. 
	
	Fix a dynamical system $(X,f)$ satisfying the Assumptions in Subsection~\ref{subsect:expandingDyn} with a Markov partition $\alpha$ such that $\mesh\alpha<\xi$. Recall that the tile graph $\Gamma$ is defined in Subsection~\ref{subsct:visualMetric}, and each vertex $u\in\Gamma$ associates with a subset $A_u\subseteq X$.

	We start with the given metric $\rho$ on $X$, which may not be a visual metric as recalled in Subsection~\ref{subsct:visualMetric}.

	\begin{lemma}\label{lem:expandIter}
		Let $(X,f)$ be a dynamical system satisfying the Assumptions in Subsection~\ref{subsect:expandingDyn} with a Markov partition $\alpha$ such that $\mesh \alpha<\xi$. Let $\Gamma$ be the tile graph associated with $\alpha$. Then for each $u\in\Gamma$ with $\abs{u}\ge 1$, 
		\begin{equation*}
			\diam A_u<\lambda^{-\abs{u}+1}\xi.
		\end{equation*}
	\end{lemma}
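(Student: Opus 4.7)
The plan is to use the distance-expanding property iteratively along the forward orbit of any two points in $A_u$, exploiting the fact that these orbits remain within tiles of diameter less than $\xi$, which is precisely the regime where the expansion estimate applies.

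More concretely, fix $u = u_1u_2\cdots u_n \in \Gamma$ with $n = \abs{u} \ge 1$, and fix arbitrary $x, y \in A_u$. By the definition of $A_u$ as $A_{u_1}\cap f^{-1}A_{u_2}\cap\cdots\cap f^{-(n-1)}A_{u_n}$, for each integer $0 \le i \le n-1$ we have $f^i x, f^i y \in A_{u_{i+1}}$, so
\begin{equation*}
    \rho(f^i x, f^i y) \le \diam A_{u_{i+1}} \le \mesh\alpha < \xi.
\end{equation*}
In particular the hypothesis $\rho(f^i x, f^i y) \le \xi$ of the distance-expanding property in Assumption~(iii) of Subsection~\ref{subsect:expandingDyn} is satisfied at every step $0 \le i \le n-2$, yielding $\rho(f^{i+1}x, f^{i+1}y) \ge \lambda \rho(f^i x, f^i y)$.

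Iterating this inequality $n-1$ times gives $\rho(f^{n-1}x, f^{n-1}y) \ge \lambda^{n-1}\rho(x,y)$. Combining with the upper bound $\rho(f^{n-1}x, f^{n-1}y) \le \mesh\alpha < \xi$ from the previous display, we obtain $\rho(x,y) < \lambda^{-(n-1)}\xi = \lambda^{-\abs{u}+1}\xi$. Since $x, y \in A_u$ were arbitrary, this gives $\diam A_u < \lambda^{-\abs{u}+1}\xi$, as desired.

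There is no serious obstacle: the only nontrivial ingredient is verifying that the iterated expansion can be applied, which is immediate from the assumption $\mesh\alpha < \xi$ together with the inclusions $f^i A_u \subseteq A_{u_{i+1}}$ built into the definition of $A_u$. Thus the proof is essentially a one-line induction on the iterate count, and no deeper properties of Markov partitions (such as the refinement condition~(c)) are needed.
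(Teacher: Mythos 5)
Your proof is correct and is essentially the same argument as the paper's: both apply the distance-expanding estimate $n-1$ times, each application being legitimate because the relevant orbit points lie in a common element of $\alpha$ and hence within distance $\mesh\alpha<\xi$ of each other. The paper organizes this as an induction on $\abs{u}$ via the shift $\sigma$ (using $fA_u=A_{\sigma u}$), whereas you unroll it into a direct forward iteration along the orbits of $x,y\in A_u$; this is a purely presentational difference.
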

	\begin{proof}
		We prove it by induction on $n=\abs{u}\in\Z_{>0}$. The statement is true for $n=1$ since $\mesh \alpha<\xi$. For $n>1$, we assume that if $\abs{u}=n-1$, then $\diam A_u<\lambda^{-n+2}\xi$. We consider $u=u_1\dots u_n\in\Gamma$ with $\abs{u}=n$. Then $fA_u=A_{\sigma u}$ and $\abs{\sigma u}=n-1$. Hence, by the inductive hypothesis, $\diam fA_u<\lambda^{-n+2}\xi$. Since $A_u\subseteq A_{u_1}$, $\diam A_u<\xi$. By Assumption (iii) in Subsection~\ref{subsect:expandingDyn}, for all $x,y\in A_u$,
		\begin{equation*}
			\rho(x,y)<\lambda^{-1}\rho(fx,fy)<\lambda^{-1}\diam fA<\lambda^{-1}\mesh \alpha_{n-1}<\lambda^{-n+1}\xi.
		\end{equation*}
		Therefore, $\diam A_u<\lambda^{-n+1}\xi$. This finishes the inductive argument.
	\end{proof}

	We use the boundary construction of W.~Floyd \cite{Fl80} to prove Propositions~\ref{prop:hyp} as in \cite[Chapter~3]{HP09}. Fixing some constant $a>0$, we define a metric $\trho\:\Gamma\times \Gamma\to \R_{\ge 0}$ as follows. 

	For each set of integers $J=I\cap \Z$ with $I\subseteq \R$ being an interval, we call $\gamma\:J\to \Gamma$ \defn{a path in $\Gamma$} if for each $i\in J$ such that $i+1\in J$, $\gamma(i)$ and $\gamma(i+1)$ are connected by an edge. If $I$ is a finite interval, then we say that $\gamma$ connects $\gamma(\inf J)$ and $\gamma(\sup J)$.

	Then we denote the length of a path $\gamma\: J\to\Gamma$ by 
	\begin{equation*}
		l(\gamma) \= \sum_{j,j+1\in J} e^{-a\max\{\abs{\gamma(j)}, \abs{\gamma(j+1)}\}}.
	\end{equation*}
	We construct a metric $\trho$ on $\Gamma$ by
	\begin{equation}  \label{def:FloydMetric}
		\trho(x,y) \= \inf\{l(\gamma): \gamma\text{ is a path in $\Gamma$ connecting }x,y\},
	\end{equation}
	for $x,y\in \Gamma$. It is easy to verify that $\trho$ is a metric on $\Gamma$. Let $\Gamma\cup \partial_{\trho}\Gamma$ be the metric completion of $\Gamma$ with respect to $\trho$. We aim to show that, for sufficiently small $a>0$, $\partial_{\trho}\Gamma$ is homeomorphic to $X$.

	\begin{rem}
		Although by our definition, $(\Gamma, \trho)$ is not a geodesic metric space, we can still construct a geodesic metric space rough-isometric to it if we need to. In fact, we can add the edges to the vertex set of $\Gamma$ to make it a 1-complex. Then we set the length of each edge $e=\{u,v\}$ by $l(e)\=e^{-a\max\{\abs{u}, \abs{v}\}}$. Then we can verify that the resulting space is geodesic and $(1, C)$-quasi-isometric to $(\Gamma, \trho)$. This construction is exactly the construction of the Floyd boundary for the function $f(n)=e^{-an}$.
	\end{rem}

	If $I=(s,t)$ is not a finite interval, then we denote $\gamma(-\infty) \= \lim_{n\to-\infty} \gamma(n)$ if $s=-\infty$ and the limit exists. We also denote $\gamma(+\infty) \= \lim_{n\to+\infty} \gamma(n)$ if $t=+\infty$ and the limit exists.
	By this way, we also say that $\gamma$ connects $\gamma(\inf (I\cap\Z))$ and $\gamma(\sup (I\cap\Z))$.

	Here is the statement of the main result of the appendix. As an analog of \cite[Theorem~10.1]{BM17} and \cite[Proposition~3.3.9]{HP09}, it proves the hyperbolicity of the tile graph. It also shows the existence of $a$-visual metrics for all sufficiently small $a>0$.
	\begin{prop}\label{prop:hyp}
		Let $(X,f)$ be a dynamical system satisfying the Assumptions in Subsection~\ref{subsect:expandingDyn} with a Markov partition $\alpha$ such that $\mesh\alpha<\xi$. Then the tile graph $\Gamma$ defined in Subsection~\ref{subsct:visualMetric} is Gromov hyperbolic, and the Gromov boundary of $\Gamma$ is naturally homeomorphic to $X$. Moreover, there is a constant $a_0$ such that for each $a\in(0,a_0)$, $\trho$ is an $a$-visual metric, and, if $\rho$ is an $a$-visual metric, then $\rho\asymp\trho$.
	\end{prop}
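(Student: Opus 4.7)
My plan is to follow the Floyd--boundary approach of Ha\"issinsky--Pilgrim. I would fix $a_0\in(0,\log\lambda)$, where $\lambda>1$ is the expansion constant from Subsection~\ref{subsect:expandingDyn}. For $a\in(0,a_0)$, Lemma~\ref{lem:expandIter} gives $\diam_\rho A_u\lesssim e^{-a\abs{u}}$. The first step is a geometric estimate relating $\trho$ to $\rho$: for any edge $\{w,w'\}$ of $\Gamma$, one has $A_w\cap A_{w'}\neq\emptyset$ and $\bigl|\abs{w}-\abs{w'}\bigr|\le 1$, so $\diam_\rho(A_w\cup A_{w'})\lesssim e^{-a\max\{\abs{w},\abs{w'}\}}$. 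Summing over any path $\gamma$ from $u$ to $v$ and using that the $A_{\gamma(i)}$ form a connected chain in $X$, taking the infimum over $\gamma$ yields $\diam_\rho(A_u\cup A_v)\lesssim \trho(u,v)$.

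Next, I would construct a natural map $\Psi\:\Gamma\cup\partial_\trho\Gamma\to X$. For any $\trho$-Cauchy sequence $\{u_n\}$, pick $x_n\in A_{u_n}$; the estimate above makes $\{x_n\}$ a $\rho$-Cauchy sequence whose limit is independent of the choices, and this defines $\Psi$ on the completion. Surjectivity of $\Psi|_{\partial_\trho\Gamma}$ onto $X$ is straightforward: given $x\in X$, choose vertices $u_n$ with $\abs{u_n}=n$ and $x\in A_{u_n}$; the path $u_0,\dots,u_n$ has $\trho$-length $\lesssim e^{-an}$, so $\{u_n\}$ converges in $\partial_\trho\Gamma$ to a point mapping to $x$. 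Injectivity on $\partial_\trho\Gamma$ is the main obstacle: one must show that if $\{u_n\}$ and $\{v_n\}$ have their tiles both clustering at a single point $x\in X$, then $\trho(u_n,v_n)\to 0$. The argument would use that by Lemma~\ref{lem:expandIter}, only boundedly many tiles of each level $n$ can meet a sufficiently small $\rho$-ball around $x$, and the openness of $f$ combined with the Markov partition structure ensures that these tiles are connected in $\Gamma$ by paths of uniformly bounded combinatorial length, whose $\trho$-length is consequently $\lesssim e^{-an}$.

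Once $\Psi$ is shown to be a homeomorphism (a continuous bijection between the compact Hausdorff spaces $\partial_\trho\Gamma$ and $X$), the remaining claims follow from standard Floyd--boundary arguments. I would establish the sharp two-sided estimate $\trho(u,v)\asymp e^{-a\min\{\Gprod{u,v}{o},\abs{u},\abs{v}\}}$ by constructing, for each pair $u,v$, a near-geodesic path going up through a common ``meeting level'' and then back down. This estimate gives the 4-point Gromov condition (thus hyperbolicity of $\Gamma$), identifies the Gromov boundary of $\Gamma$ with $\partial_\trho\Gamma\cong X$, and shows that $\trho$ restricts to an $a$-visual metric on the Gromov boundary. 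Finally, if $\rho$ is itself an $a$-visual metric on $X$, combining the geometric estimate above with this sharp Floyd estimate and the standard uniqueness of $a$-visual metrics up to bi-Lipschitz equivalence yields $\rho\asymp\trho$. The hardest step is the injectivity of $\Psi$ on the boundary; the rest reduces to careful but standard bookkeeping.
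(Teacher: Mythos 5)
Your plan is correct and follows essentially the same route as the paper: the Floyd metric $\trho$ with weights $e^{-a\max\{\abs{\gamma(i)},\abs{\gamma(i+1)}\}}$, the estimate $\diam_\rho(A_u\cup A_v)\lesssim\trho(u,v)$, the identification $\partial_{\trho}\Gamma\cong X$ via $\Psi$ (with injectivity handled by descending to a coarse level where tiles containing the common point are adjacent), the two-sided estimate $\diam_{\trho}(A_u\cup A_v)\asymp e^{-a\Gprod{u,v}{o}}$ (the lower bound being where the paper invokes its flower lemma), and hyperbolicity plus the visual-metric claims read off from that estimate. The only cosmetic difference is that your $\min\{\Gprod{u,v}{o},\abs{u},\abs{v}\}$ is just $\Gprod{u,v}{o}$, since the Gromov product never exceeds either level.
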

	
	To prove Proposition~\ref{prop:hyp} at the end of the appendix, we need more preparations. First, we prove that the $\trho$-boundary coincides with $X$.

	\begin{prop}\label{prop:boundaryCoincide}
		Let $(X,f)$ be a dynamical system with constants $\lambda$ and $\xi$ satisfying the Assumptions in Subsection~\ref{subsect:expandingDyn}. Let $\alpha$ be a Markov partition with $\mesh\alpha<\xi$. Let $\Gamma$ be the tile graph associated with $(X,f)$ and $\alpha$ defined in Subsection~\ref{subsct:visualMetric}. Suppose that $0<a<\log\lambda$ and $\trho$ is a metric on $\Gamma$ defined in (\ref{def:FloydMetric}). Then $\partial_{\trho}\Gamma$ is naturally homeomorphic to $X$ by a map $\Psi$. Moreover, for all $\zeta, \eta\in X$, 
		\begin{equation}\label{eq:visualLessGiven}
			\rho(\Psi\zeta, \Psi\eta) \lesssim \trho(\zeta,\eta).
		\end{equation}
	\end{prop}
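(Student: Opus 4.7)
The plan is to construct $\Psi\:\partial_{\trho}\Gamma\to X$ by sending a Cauchy class $\eta=[\{u_n\}]$ in $(\Gamma,\trho)$ to $\lim_n x_n\in X$ for any choice $x_n\in A_{u_n}$, and to verify bijectivity, the bound (\ref{eq:visualLessGiven}), and the homeomorphism property via compactness. The starting point is a path-length comparison: for any path $\gamma=(\gamma(0),\dots,\gamma(n))$ in $\Gamma$ avoiding $o$, choosing $z_j\in A_{\gamma(j)}\cap A_{\gamma(j+1)}$ (nonempty by the edge condition), any $x\in A_{\gamma(0)}$, $y\in A_{\gamma(n)}$ satisfy $\rho(x,y)\le\sum_{j=0}^n\diam A_{\gamma(j)}$. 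By Lemma~\ref{lem:expandIter}, $\diam A_{\gamma(j)}\le\xi\lambda^{-\abs{\gamma(j)}+1}$, and since $a<\log\lambda$, $\lambda^{-\abs{\gamma(j)}}\lesssim e^{-a\max\{\abs{\gamma(j)},\abs{\gamma(j+1)}\}}$ (levels of neighbors in $\Gamma$ differ by at most one), whence $\rho(x,y)\lesssim l(\gamma)$. Paths through $o$ carry $l$-length at least $2e^{-a}$, so the extra $\diam X$ they may introduce is absorbed into a universal constant. Taking infima over paths yields, for all $u,v\in\Gamma$, $x\in A_u$, $y\in A_v$,
\begin{equation*}
    \rho(x,y)\lesssim \trho(u,v)+\diam A_u+\diam A_v.
\end{equation*}

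Given a Cauchy sequence $\{u_n\}$ representing $\eta\in\partial_{\trho}\Gamma$, one first shows $\abs{u_n}\to+\infty$: any bounded subsequence lies in a finite vertex set where $\trho$ has a positive minimum, which would force convergence inside $\Gamma$. Then $\diam A_{u_n}\to 0$ by Lemma~\ref{lem:expandIter}, and the estimate shows that $\{x_n\}$ is $\rho$-Cauchy for every choice $x_n\in A_{u_n}$. Define $\Psi(\eta)\=\lim_n x_n$; independence from the choices is immediate, and passing the inequality to the limit yields both (\ref{eq:visualLessGiven}) and the continuity of $\Psi$. Surjectivity follows by choosing, for each $x\in X$, a nested sequence of tiles $A_{u^{(n)}}\ni x$ with $\abs{u^{(n)}}=n$ (available by the Markov property): consecutive $u^{(n)}$'s are $\Gamma$-adjacent, and $\sum_{j>n}e^{-aj}\to 0$, so $\{u^{(n)}\}$ is Cauchy with $\Psi([\{u^{(n)}\}])=x$.

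Injectivity is the central step. Suppose $\Psi(\eta_1)=\Psi(\eta_2)=x$, with representatives $\{u_n\}$, $\{v_n\}$. For each fixed $k$, the $\tau$-ancestor $\tau^{\abs{u_n}-k}u_n$ is a level-$k$ tile containing $x_n$; by closedness of tiles and $x_n\to x$, it contains $x$ for all $n$ large. Only finitely many level-$k$ tiles contain $x$, and any two of them share $x$, so they form a clique in $\Gamma$ with mutual $\trho$-distance $\le e^{-ak}$. The $\tau$-ancestor path from $u_n$ to its level-$k$ ancestor has $l$-length $\sum_{j>k}e^{-aj}\lesssim e^{-ak}$, and similarly for $v_n$; combining, $\trho(u_n,v_n)\lesssim e^{-ak}$ for $n$ large, and $k\to+\infty$ forces $\eta_1=\eta_2$. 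Total boundedness (the finite level-$n$ vertex set is a $Ce^{-an}$-net via $\tau$-ancestors) makes $\Gamma\cup\partial_{\trho}\Gamma$ compact, so the continuous bijection $\Psi$ between compact Hausdorff spaces is automatically a homeomorphism.

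The hard part will be the injectivity argument: two apparently distinct Cauchy rays converging in $X$ to the same $x$ must be shown asymptotically equivalent in $\trho$. The resolution hinges on the finiteness of $\alpha$ together with the clique phenomenon above: at each level $k$, the tiles containing a given $x$ form a $\Gamma$-clique, so one can ``switch branches'' at cost a single edge weight $e^{-ak}\to 0$. A minor secondary subtlety is the treatment of paths through $o$ in the basic estimate, handled by noting each such path contributes at least $2e^{-a}$ to $l(\gamma)$, thereby absorbing $\diam X$ into the universal constant.
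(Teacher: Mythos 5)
Your proposal is correct and follows essentially the same route as the paper: the key estimate $\rho(x,y)\lesssim l(\gamma)$ obtained by summing tile diameters against $a<\log\lambda$, the definition of $\Psi$ via shrinking tiles, surjectivity via a descending chain of tiles containing a given point, injectivity by routing a geometrically summable path through low-level tiles containing the common limit point, and compactness to upgrade the continuous bijection to a homeomorphism. Your injectivity step is phrased symmetrically via $\tau$-ancestors and the clique of level-$k$ tiles containing $x$ (where the justification that the ancestors eventually contain $x$ implicitly uses that the finitely many level-$k$ tiles not containing $x$ are uniformly bounded away from it), whereas the paper compares each sequence to a canonical descending chain, but this is the same underlying argument.
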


	By \emph{naturally homeomorphic} we mean that the homeomorphism $\Psi$ from the $\partial_{\trho}\Gamma$ to $X$ satisfies the following property: for every sequence of vertices $\{u_n\}_{n\in\Z_{>0}}$ in $\Gamma$ converging to $\zeta\in\partial_{\trho}\Gamma$, the corresponding sequence of subsets $\{A_{u_n}\}_{n\in\Z_{>0}}$ converges in the sense of Gromov--Hausdorff convergence to a singleton $\{\Psi(\zeta)\}\subseteq X$.

	\begin{proof}
		First, we define the map $\Psi$ as follows. For each $\zeta\in\partial_{\trho}\Gamma$, choose a sequence of vertices $\{u_n\}_{n\in\Z_{>0}}$ in $\Gamma$ converging to $\zeta$. Then it follows from the definition (\ref{def:FloydMetric}) of $\trho$ that $\lim_{n\to+\infty} \abs{u_n}=+\infty$. Hence, by Lemma~\ref{lem:expandIter}, $\diam A_{u_n}\to 0$. By the compactness of $X$, there is a subsequence of $A_{u_n}$ Gromov--Hausdorff converging to a singleton $\{x\}\subseteq X$. We define $\Psi(\zeta)\=x$ and the well-definedness follows from the following claim with $\zeta=\eta$.

		\smallskip

		\emph{Claim.} Suppose that $\{v_n\}_{n\in\Z_{>0}}$ and $\{w_n\}_{n\in\Z_{>0}}$ are two sequences in $\Gamma$ converging to $\zeta,\eta\in\partial_{\trho}\Gamma$, respectively, such that $\{A_{v_n}\}_{n\in\Z_{>0}}$ Gromov--Hausdorff converges to $\{x\}\subseteq X$ and $\{A_{w_n}\}_{n\in\Z_{>0}}$ Gromov--Hausdorff converges to $\{y\}\subseteq X$, then
		$\rho(x, y) \lesssim \trho(\zeta, \eta)$.
		
		\smallskip

		Indeed, without the loss of generality, we may assume that $x\neq y$. For each $n\in\Z_{>0}$ and each path $\gamma\:\{0,\cdots, k\}\to \Gamma$ connecting $w_n$ and $v_n$, since $a<\log \lambda$, by Lemma~\ref{lem:expandIter},
		\begin{equation}\label{eq:dist1}
			l(\gamma) = \sum_{i\in \{0,\dots, k-1\}} e^{-a\max\{\abs{\gamma(i)}, \abs{\gamma(i+1)}\}}  
			\gtrsim \sum_{i=0}^k e^{-a\abs{\gamma(i)}} 
			 \ge \sum_{i=0}^k \lambda^{-\abs{\gamma(i)}} 
			 \gtrsim \sum_{i=0}^k \diam A_{\gamma(i)}.
		\end{equation}
		Since $\gamma(i)$ and $\gamma(i+1)$ are connected by an edge of $\Gamma$, 
		for each $i\in\{0,\cdots, k-1\}$, $A_{\gamma(i)}\cap A_{\gamma(i+1)}\neq\emptyset$. It follows that
		\begin{equation}\label{eq:dist2}
			\rho(A_{v_n}, A_{w_n}) \le \sum_{i=1}^{k-1} \diam A_{\gamma(i)}.
		\end{equation}
		Recall that $\{A_{v_n}\}$ and $\{A_{w_n}\}$ Gromov--Hausdorff converge to $\{x\}$ and $\{y\}$, respectively. Let $n\to+\infty$. It follows from (\ref{eq:dist1}), (\ref{eq:dist2}), and the definition (\ref{def:FloydMetric}) of $\trho$ that
		$\rho(x, y) \lesssim \trho(\zeta, \eta)$.
		The claim follows.

		\smallskip

		By the claim, the map $\Psi$ we have just defined is continuous, and (\ref{eq:visualLessGiven}) holds. Note that $\Gamma$ is locally compact. Hence, $\partial_{\trho}\Gamma$ is compact. Since a continuous bijection between compact Hausdorff spaces is a homeomorphism, it suffices to prove that, $\Psi$ is a bijection. 
		
		To prove the surjectivity, we consider an arbitrary point $x\in X$. For each $n\in\Z_{>0}$, choose $u_n\in\Gamma$ such that $x\in A_{u_n}$. Then it is easy to verify that $\{u_n\}_{n\in\Z_{>0}}$ is a $\trho$-Cauchy sequence whose limit point maps to $x$ by $\Psi$. Hence, $\Psi$ is surjective. 

		To prove the injectivity, we assume that $\{v_n\}_{n\in\Z_{>0}}$ is a sequence in $\Gamma$ converging to $\eta\in\partial_{\trho}\Gamma$ such that $A_{v_n}$ Gromov--Hausdorff converges to $\{x\}\subseteq X$. Then for each $N\in\Z_{>0}$, there exists an integer $M>0$ such that for each integer $m>M$, 
		\begin{equation}\label{eq:converge}
			A_{v_m} \subseteq \bigcup_{v\in\Gamma, x\in A_v, |v|=N} A_v,
		\end{equation}
		since the interior of the latter set contains $x$.
		For each $n\in\Z_{>0}$, choose $u_n\in\Gamma$ such that $\abs{u_n}=n$ and $x\in A_{u_n}$. It is easy to verify that $\{u_n\}_{n\in\Z_{>0}}$ is a $\trho$-Cauchy sequence whose limit point maps to $x$ by $\Psi$. We denote the limit of $\{u_n\}$ by $\zeta\in\partial_{\trho}\Gamma$.

		It suffices to show that $\zeta=\eta$. For each integer $N>0$, choose an integer $M>0$ such that (\ref{eq:converge}) holds for each integer $m>M$. For each integer $m>M$, we choose a point $z\in A_{v_m}$ and choose for each integer $N\le k\le m$ a vertex $w_k$ such that $\abs{w_k}=k$ and $z\in A_{w_k}$. In particular, by (\ref{eq:converge}), we can assume that $x\in A_{w_N}$ and $w_m=v_m$. Then the sequence $v_m=w_m,w_{m-1},\dots,w_N,u_n,u_{n+1},\dots,$ produces a path in $\Gamma$ connecting $v_m$ and $\zeta$.
		Hence, by the definition (\ref{def:FloydMetric}) of $\trho$,
		\begin{equation*}
			\trho(v_m, \zeta) \le  \sum_{N\le i< m} e^{-a(i+1)} + e^{-aN} + \sum_{i\ge N} e^{-a(i+1)} < \frac{2e^{-aN}}{1-e^{-a}}.
		\end{equation*}
		As $N\to+\infty$, we have $\lim_{m\to+\infty} \trho(v_m, \zeta) = 0$. Hence, as the limit of $\{v_n\}_{n\in\Z_{>0}}$, $\eta=\zeta$. The injectivity follows.
	\end{proof}

	By Proposition~\ref{prop:boundaryCoincide}, we identify $X$ with $\partial_{\trho} \Gamma$ and regard the metric $\trho$ as defined on $X$. By the naturality of the identification, if a sequence $\{u_n\}$ in $\Gamma$ converges to $x\in X$, then $A_{\sigma u_n}=f A_{u_n}$ Gromov--Hausdorff converges to $\{fx\}$. Hence, $\{\sigma u_n\}$ converges to $fx$, which establishes the following corollary:
	\begin{cor} \label{cor:sigmaExtendF}
		Let $(X,f)$, $\alpha$, $\Gamma$, and $a$ be from Proposition~\ref{prop:boundaryCoincide}. Then under the identification in Proposition~\ref{prop:boundaryCoincide}, the map $\sigma\:\Gamma\to\Gamma$ extends to $f$ on the boundary $\partial_{\trho}\Gamma\cong X$.
	\end{cor}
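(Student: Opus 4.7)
The plan is to verify that the map $\bar\sigma\: \Gamma \cup \partial_{\trho}\Gamma \to \Gamma\cup\partial_{\trho}\Gamma$ defined by $\bar\sigma|_\Gamma = \sigma$ and $\bar\sigma|_{\partial_{\trho}\Gamma} = \Psi^{-1} \circ f \circ \Psi$ is continuous, where $\Psi$ is the homeomorphism from Proposition~\ref{prop:boundaryCoincide}. Since continuity at vertices of $\Gamma$ is trivial (vertices are isolated in $\Gamma\cup\partial_{\trho}\Gamma$ in the sense that every sequence converging to a vertex is eventually constant is false, but continuity still needs to be checked there; however combinatorial structure plus $\hat P$-locality shows it directly), the main task is continuity at boundary points.

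Fix $\zeta \in \partial_{\trho}\Gamma$ and let $\{u_n\}_{n\in\Z_{>0}}$ be any sequence in $\Gamma$ with $u_n \to \zeta$ in $(\Gamma\cup\partial_{\trho}\Gamma, \trho)$. By the definition of $\Psi$ recalled in the proof of Proposition~\ref{prop:boundaryCoincide}, we have $\abs{u_n}\to+\infty$ and $\{A_{u_n}\}$ Gromov--Hausdorff converges to $\{\Psi(\zeta)\}\subseteq X$. In particular, $\abs{u_n}\ge 2$ for all sufficiently large $n$, so $A_{\sigma u_n} = f(A_{u_n})$ by the definition~\eqref{def:sigma} of $\sigma$. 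Since $f\: X\to X$ is continuous on the compact metric space $X$, it is uniformly continuous, and the Gromov--Hausdorff convergence $A_{u_n}\to \{\Psi(\zeta)\}$ is preserved under $f$. Therefore $A_{\sigma u_n}$ Gromov--Hausdorff converges to $\{f(\Psi(\zeta))\}$.

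Now I apply Proposition~\ref{prop:boundaryCoincide} in the other direction: since $\abs{\sigma u_n} = \abs{u_n}-1 \to +\infty$ and $\diam A_{\sigma u_n}\to 0$, the sequence $\{\sigma u_n\}$ is $\trho$-Cauchy, and its $\trho$-limit $\eta\in\partial_{\trho}\Gamma$ is, by the naturality clause of Proposition~\ref{prop:boundaryCoincide}, the unique boundary point with $\Psi(\eta)=f(\Psi(\zeta))$. This is exactly $\eta = \bar\sigma(\zeta)$, so $\bar\sigma(u_n)\to \bar\sigma(\zeta)$. Since $\{u_n\}$ was arbitrary, $\bar\sigma$ is continuous at $\zeta$, and the identity $\Psi\circ \bar\sigma|_{\partial_{\trho}\Gamma} = f\circ \Psi$ holds by construction.

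There is no real obstacle here; the entire argument is an unpacking of the naturality statement of Proposition~\ref{prop:boundaryCoincide}, together with the identity $A_{\sigma u}=f(A_u)$ and the continuity of $f$. The only point to be mildly careful about is that the extension is well-defined: different sequences $\{u_n\},\{u_n'\}$ converging to the same $\zeta$ yield $\{\sigma u_n\},\{\sigma u_n'\}$ converging to the same limit, which follows from the uniqueness of $\Psi^{-1}(f(\Psi(\zeta)))$ in Proposition~\ref{prop:boundaryCoincide}.
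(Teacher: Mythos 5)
Your proposal is correct and follows essentially the same route as the paper: the paper's argument is precisely that for $u_n\to x$ the tiles satisfy $A_{\sigma u_n}=f(A_{u_n})$, which Gromov--Hausdorff converge to $\{fx\}$ by continuity of $f$, whence $\sigma u_n\to fx$ by the naturality clause of Proposition~\ref{prop:boundaryCoincide}. Your additional remarks on well-definedness and continuity of the extension are just a more explicit unpacking of the same naturality statement.
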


	Roughly speaking, the following proposition says that when $X$ is equipped with the metric $\trho$, $f$ is locally a similarity.

	\begin{prop}\label{prop:visual}
		Let $(X,f)$, $\alpha$, $\Gamma$, $a$, $\xi$, and $\lambda$ satisfy the assumptions in Proposition~\ref{prop:boundaryCoincide}. Let $\trho$ be the metric defined in (\ref{def:FloydMetric}) but regarded as on $X$ under the identification in Proposition~\ref{prop:boundaryCoincide}. Then there is a constant $\xi'>0$ such that for all $x,y\in X$ with 
		$\trho(x,y) < \xi'$, we have $\trho(fx,fy) = e^a\trho(x,y)$.
	\end{prop}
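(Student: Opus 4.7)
The plan is to prove both inequalities $\trho(fx,fy)\le e^a\trho(x,y)$ and $\trho(fx,fy)\ge e^a\trho(x,y)$ by transferring near-geodesic paths in $\Gamma$ across the shift $\sigma$. The starting observation is that if $\gamma\:\{0,\dots,k\}\to\Gamma$ is a path all of whose vertices have level $\ge 1$, then $\sigma\circ\gamma$ is again a valid path in $\Gamma$: the level-difference condition is preserved because $\abs{\sigma u}=\abs{u}-1$, and $A_{\sigma u}\cap A_{\sigma v}\supseteq f(A_u\cap A_v)\ne\emptyset$. Each edge length $e^{-a\max\{\abs{u},\abs{v}\}}$ is multiplied by exactly $e^a$, so $l(\sigma\circ\gamma)=e^a l(\gamma)$.

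I then choose $\xi'$ small enough that whenever $\trho(x,y)<\xi'$ three things hold: (i) by Proposition~\ref{prop:boundaryCoincide}, $\rho(x,y)$ is much smaller than the constant $\xi$ of Subsection~\ref{subsect:expandingDyn}, so $y\in B_\rho(x,\xi)$; (ii) every path $\gamma$ with $l(\gamma)<2\xi'$ has all interior vertices at level $\ge 1$, since an interior vertex at level $0$ forces two incident edges of length $e^{-a}$ and hence $l(\gamma)\ge 2e^{-a}$ (so $\xi'<e^{-a}$ suffices); (iii) the set $\bigcup_i A_{\gamma(i)}$ has $\rho$-diameter at most a small constant multiple of $\xi'$, by the estimate $\sum_i\diam A_{\gamma(i)}\lesssim\sum_i\lambda^{-\abs{\gamma(i)}}\lesssim\sum_i e^{-a\abs{\gamma(i)}}\asymp l(\gamma)$ (using Lemma~\ref{lem:expandIter} and $a<\log\lambda$, as in Proposition~\ref{prop:boundaryCoincide}). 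Assumption~(iii) of Subsection~\ref{subsect:expandingDyn} combined with compactness of $X$ produces a uniform $\delta_0>0$ with $B_\rho(fz,\delta_0)\subseteq fB_\rho(z,\xi)$ for all $z\in X$; after shrinking $\xi'$ we may also assume the $\rho$-diameter bound in (iii) is below $\delta_0/10$.

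For the upper bound, pick $u_n\to x$ and $v_n\to y$ in $\Gamma$ and near-geodesic paths $\gamma_n$ from $u_n$ to $v_n$ with $l(\gamma_n)\to\trho(x,y)$. By (ii), all interior vertices of $\gamma_n$ have level $\ge 1$ for large $n$; the endpoints too, since $\abs{u_n},\abs{v_n}\to\infty$. Then $\sigma\circ\gamma_n$ is a valid path from $\sigma u_n$ to $\sigma v_n$ of length $e^a l(\gamma_n)$, and Corollary~\ref{cor:sigmaExtendF} gives $\sigma u_n\to fx$ and $\sigma v_n\to fy$, yielding $\trho(fx,fy)\le e^a\trho(x,y)$.

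For the lower bound, take $u'_n\to fx$, $v'_n\to fy$ and near-geodesics $\gamma'_n$ from $u'_n$ to $v'_n$; by (iii), every tile $A_{\gamma'_n(i)}$ lies in $B_\rho(fx,\delta_0)\subseteq fB_\rho(x,\xi)$. I lift each $u'=\gamma'_n(i)$ to the unique vertex $u$ of level $\abs{u'}+1$ with $A_u\subseteq B_\rho(x,\xi)$ and $\sigma u=u'$: existence follows from the Markov property (a preimage $z\in B_\rho(x,\xi)$ of an interior point of $A_{u'}$ lies in a level-$(\abs{u'}+1)$ tile $A_u$ with $fA_u=A_{u'}$, and for $\abs{u'}$ large $A_u\subseteq B_\rho(x,\xi)$), and uniqueness from injectivity of $f|_{B_\rho(x,\xi)}$. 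The lifted sequence $\gamma_n$ is a valid path because $A_{\gamma_n(i)}\cap A_{\gamma_n(i+1)}=(f|_{B_\rho(x,\xi)})^{-1}(A_{\gamma'_n(i)}\cap A_{\gamma'_n(i+1)})\ne\emptyset$ by injectivity, and the levels differ by at most $1$; also $l(\gamma_n)=e^{-a}l(\gamma'_n)$. Since $A_{u_n}=(f|_{B_\rho(x,\xi)})^{-1}(A_{u'_n})$ Gromov--Hausdorff-converges to $(f|_{B_\rho(x,\xi)})^{-1}(fx)=x$ and similarly $v_n\to y$ (using $y\in B_\rho(x,\xi)$ from (i)), we conclude $\trho(x,y)\le e^{-a}\trho(fx,fy)$. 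The main obstacle throughout is calibrating $\xi'$ so that near-geodesic paths on both sides simultaneously stay at level $\ge 1$ and lie inside the injectivity ball $B_\rho(x,\xi)$, which is what makes both the shift and the lift valid; once this is arranged, everything follows from the basic scaling identity $l(\sigma\circ\gamma)=e^a l(\gamma)$.
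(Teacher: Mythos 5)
Your proposal is correct and follows essentially the same route as the paper: the upper bound comes from pushing near-optimal paths forward by $\sigma$ using the scaling identity $l(\sigma\circ\gamma)=e^{a}l(\gamma)$, and the lower bound comes from lifting near-optimal paths for $fx,fy$ through the local homeomorphism $f|_{B_\rho(x,\xi)}$, with $\xi'$ calibrated so that the paths avoid the root and stay in the injectivity ball. The only cosmetic difference is that you identify the lifted endpoint with $y$ directly via uniqueness of preimages in $B_\rho(x,\xi)$, whereas the paper first obtains some $y'$ with $fy'=fy$ and then shows $y'=y$ from the expansion property — the same underlying fact.
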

	\begin{proof}
		Let $C=C(\lesssim)$ be the constant in (\ref{eq:visualLessGiven}). Put
		\begin{equation}\label{eq:assumeDistXY}
			\xi'\=\min\bigl\{e^{-2a}, \xi/(2C)\bigr\}.
		\end{equation}
		
		Fix $x,y\in X$ such that $\trho(x,y) < \xi'$. For each $\epsilon>0$, by the definition (\ref{def:FloydMetric}) of $\trho$, we can choose $\gamma\: \Z\to \Gamma$ be a path connecting $x$ and $y$ such that
		\begin{equation} \label{eq:distxy}
			\trho(x,y)+\epsilon > l(\gamma)=\sum_{i\in\Z} e^{-a\max\{\abs{\gamma(i)}, \abs{\gamma(i+1)}\}}.
		\end{equation}
		Consider $\sigma_*\gamma \: i\in\Z\mapsto \sigma\gamma(i)\in\Gamma$. By Corollary~\ref{cor:sigmaExtendF}, it connects $fx$ and $fy$. By the definition (\ref{def:FloydMetric}) of $\trho$,
		\begin{align*}
			\trho(fx, fy)
			\le l(\sigma_*\gamma)
			&= \sum_{i\in\Z} e^{-a\max\{\abs{\sigma_*\gamma(i)}, \abs{\sigma_*\gamma(i+1)}\}}\\
		    &= e^a\sum_{i\in\Z} e^{-a\max\{\abs{\gamma(i)}, \abs{\gamma(i+1)}\}}
			=e^al(\gamma)
			<e^{a} (\trho(x,y)+\epsilon).
		\end{align*}
		Letting $\epsilon\to 0$, we have
		\begin{equation}\label{eq:visualLess}
			\trho(fx, fy)\le e^a\trho(x,y).
		\end{equation}

		For the converse inequality, for each $\epsilon>0$ such that 
		\begin{equation*}
			\epsilon < \xi'-\trho(x,y),
		\end{equation*}
		by (\ref{eq:visualLess}) and (\ref{eq:assumeDistXY}), 
		\begin{equation}\label{eq:epsSmall}
			\epsilon < e^{-2a}-\trho(x,y) \le e^{-a}(e^{-a}-\trho(fx,fy)).
		\end{equation}
		By the definition (\ref{def:FloydMetric}) of $\trho$, we can choose $\gamma\: \Z\to \Gamma$ as a path connecting $fx$ and $fy$ such that
		\begin{equation} \label{eq:distFxFy}
			l(\gamma)<\trho(fx,fy)+\epsilon.
		\end{equation}
		Then by (\ref{eq:epsSmall}), 
		\begin{equation*} 
			l(\gamma)< e^{-a}.
		\end{equation*}
		Hence, it is easy to show by the definition (\ref{def:FloydMetric}) of $\trho$ that the image of $\gamma$ avoids $o$. 
		We can choose a path $\gamma'$ as a lift of $\gamma$ by $\sigma^{-1}$ starting from $x$ which connects $x$ with some point $y'\in X$. Then $fy'=fy$ and $l(\gamma)=e^a l(\gamma')$. By the definition (\ref{def:FloydMetric}) of $\trho$, (\ref{eq:visualLess}), and (\ref{eq:distFxFy}),
		\begin{equation}\label{eq:distXYprime}
			\trho(x,y')\le l(\gamma')=e^{-a}l(\gamma)<e^{-a}(\trho(fx, fy)+\epsilon).
		\end{equation}
		Recall that $\trho(x,y)<\xi'-\epsilon$. By (\ref{eq:visualLess}) and (\ref{eq:distXYprime}),
		\begin{equation*}
			\trho(y,y') \le \trho(x,y)+\trho(x,y') \le 2\trho(x,y) + e^{-a}\epsilon<2\xi'.
		\end{equation*}
		By Proposition~\ref{prop:boundaryCoincide} and (\ref{eq:assumeDistXY}), $\rho(y,y') \le 2C\xi' \le\xi$.
		It follows from Assumption (iii) in Subsection~\ref{subsect:expandingDyn} that $fy'=fy$ implies $y'=y$. By (\ref{eq:distXYprime}),
		\begin{equation*}
			\trho(x,y) = \trho(x,y')
			<e^{-a}(\trho(fx,fy)+\epsilon).
		\end{equation*}
		As $\epsilon\to 0$, we have $\trho(fx, fy)\ge e^a\trho(x,y)$. Combining the inequality above with (\ref{eq:visualLess}), the proposition follows.
	\end{proof}

	As an immediate corollary, the dynamical system $f$ on the metric space $(X,\trho)$ is expanding as well as on the original metric space $(X,\rho)$.
	\begin{cor} \label{cor:visualExpanding}
		Under the notations and the assumptions in Proposition~\ref{prop:visual}. Equipped with the metric $\trho$ instead of the given metric $\rho$, $(X,f)$ satisfies the Assumptions in Subsection~\ref{subsect:expandingDyn} with some constants $\xi$ and $\lambda$.
	\end{cor}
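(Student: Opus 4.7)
The plan is to verify the three conditions (i), (ii), (iii) of Subsection~\ref{subsect:expandingDyn} one by one for $(X,\trho,f)$, using that the conditions (i) and (ii) are purely topological while (iii) is the content of Proposition~\ref{prop:visual}.

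The first step is to observe that $\trho$ and $\rho$ induce the same topology on $X$. Indeed, Proposition~\ref{prop:boundaryCoincide} gives a natural homeomorphism $\Psi\:\partial_{\trho}\Gamma \to X$ (with $X$ equipped with $\rho$), and by our convention we identify $X$ with $\partial_{\trho}\Gamma$ via $\Psi$. Therefore $(X,\trho)$ is compact Hausdorff, and a subset of $X$ is $\trho$-open if and only if it is $\rho$-open. Since topological transitivity and openness of $f$ are defined purely in terms of the topology on $X$, conditions~(i) and~(ii) of Subsection~\ref{subsect:expandingDyn} for $(X,\trho,f)$ follow immediately from the same conditions for $(X,\rho,f)$, which hold by hypothesis.

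It remains to verify condition~(iii), i.e., that $f$ is distance-expanding with respect to $\trho$, and to identify suitable constants. This is essentially the content of Proposition~\ref{prop:visual}: it provides a constant $\xi' > 0$ such that for all $x,y\in X$ with $\trho(x,y)<\xi'$ we have $\trho(fx,fy)=e^{a}\trho(x,y)$. Setting $\xi_{\mathrm{new}} \= \xi'$ and $\lambda_{\mathrm{new}} \= e^{a} > 1$ (recall $a>0$), we obtain $\trho(fx,fy)\ge \lambda_{\mathrm{new}}\trho(x,y)$ whenever $\trho(x,y)\le \xi_{\mathrm{new}}$, which is exactly condition~(iii). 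Moreover, the same identity $\trho(fx,fy)=e^{a}\trho(x,y)$ implies that $f|_{B_{\trho}(x,\xi_{\mathrm{new}}/2)}$ is injective, hence (being a continuous open map between compact Hausdorff spaces) a homeomorphism to its image, so the additional local-homeomorphism clause of Assumption~(iii) also holds.

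Combining these three points, $(X,\trho,f)$ satisfies all of (i), (ii), (iii) with the constants $\xi_{\mathrm{new}}$ and $\lambda_{\mathrm{new}}$ above. I expect no real obstacle here; the entire content has been done in Propositions~\ref{prop:boundaryCoincide} and~\ref{prop:visual}, and the corollary is essentially a packaging statement. The only mild point worth double-checking is that Proposition~\ref{prop:visual}'s identity $\trho(fx,fy)=e^a\trho(x,y)$ is indeed being applied on a ball of $\trho$-radius strictly below $\xi'$, which is automatic from the statement.
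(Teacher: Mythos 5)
Your proposal is correct and follows exactly the paper's own argument: conditions (i) and (ii) are purely topological and transfer via the homeomorphism of Proposition~\ref{prop:boundaryCoincide}, while condition (iii) is read off from Proposition~\ref{prop:visual} with $\lambda=e^{a}$ and $\xi=\xi'$. Your extra remark on the local-homeomorphism clause (shrinking the radius to $\xi'/2$ to guarantee injectivity) is a harmless refinement of what the paper leaves implicit.
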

	\begin{proof}
		Assumptions~(i) and (ii) are only related to the topology induced by $\trho$, and hence are both satisfied due to the homeomorphism in Proposition~\ref{prop:boundaryCoincide}. Assumption~(iii) can be easily verified by Proposition~\ref{prop:visual} with $\lambda\=e^{a}$ and $\xi\=\xi'$.
	\end{proof}

	The next proposition follows the ideas of \cite[Proposition~3.3.2]{HP09} and \cite[Lemma~8.11]{BM17}. In general, it shows that equipped with the metric $\rho$, tiles are uniformly ``quasi-round'', that is, every tile contains points that are ``deep inside'' the tile.

	\begin{prop} \label{prop:FloydVisualMetric}
		Under the notations and the assumptions in Proposition~\ref{prop:visual}, there is a constant $C_0 > 1$ such that, for each $u\in\Gamma$, there is a point $x\in A_u$ so that
		\begin{equation*}
			B_{\trho} \bigl(x,C_0^{-1} e^{-a\abs{u}}\bigr) \subseteq A_u \subseteq B_{\trho} \bigl(x,C_0 e^{-a\abs{u}}\bigr).
		\end{equation*}
		
	\end{prop}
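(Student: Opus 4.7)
The plan is to treat the two inclusions separately: the outer inclusion $A_u\subseteq B_\trho(x,C_0 e^{-a|u|})$ comes from a direct length estimate using the Floyd-type definition of $\trho$, while the inner inclusion $B_\trho(x, C_0^{-1} e^{-a|u|})\subseteq A_u$ is obtained by pulling a fixed ball in a level-$1$ tile back through an inverse branch of $f^{|u|-1}|_{A_u}$, using the local similarity of $f$ from Proposition~\ref{prop:visual}.

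For the outer inclusion, fix any $y\in A_u$ and build a decreasing sequence of tiles $A_{v_n}\ni y$ with $|v_n|=n$ and $v_{|u|}=u$; consecutive $v_n$'s are joined by an edge in $\Gamma$, so by the definition (\ref{def:FloydMetric}) of $\trho$ and Proposition~\ref{prop:boundaryCoincide}, we obtain
\begin{equation*}
\trho(y,u)\le \sum_{n=|u|}^{+\infty} e^{-a(n+1)}=\frac{e^{-a|u|}}{e^a-1}.
\end{equation*}
Since this holds for every $y\in A_u$, picking any $x\in A_u$ gives $\diam_\trho A_u\lesssim e^{-a|u|}$, which yields the outer inclusion for $C_0$ sufficiently large. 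For the inner inclusion, using that the $\trho$- and $\rho$-topologies on $X$ agree (continuous bijection between compact Hausdorff spaces via Proposition~\ref{prop:boundaryCoincide}) and the Markov partition property $A_i=\overline{\inter A_i}$, choose for each level-$1$ tile $A_i$ a point $x_i\in\inter A_i$ and $r_i>0$ with $B_\trho(x_i,r_i)\subseteq A_i$; set $r_0=\min\{\min_i r_i,\xi'/2\}$, where $\xi'$ is the constant from Proposition~\ref{prop:visual}. By Lemma~\ref{lem:expandIter} and Assumption~(iii), $f^{|u|-1}|_{A_u}\colon A_u\to A_{u_{|u|}}$ is a homeomorphism (surjective by the Markov property, injective because each forward iterate has $\rho$-diameter below $\xi$); let $g_u$ be its inverse and set $x\=g_u(x_{u_{|u|}})\in A_u$. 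I claim $B_\trho(x,e^{-a(|u|-1)}r_0)\subseteq A_u$: for any $z$ in this ball, iterating Proposition~\ref{prop:visual} (intermediate distances stay $\le r_0\le\xi'$) gives $\trho(f^kx,f^kz)=e^{ak}\trho(x,z)$ for $0\le k\le|u|-1$; in particular $f^{|u|-1}z\in B_\trho(x_{u_{|u|}},r_0)\subseteq A_{u_{|u|}}$, so $z'\=g_u(f^{|u|-1}z)\in A_u$. A backward iteration of Proposition~\ref{prop:visual} shows $\trho(x,z')<e^{-a(|u|-1)}r_0$, so both $z$ and $z'$ lie in that same ball around $x$; since $f^{|u|-1}z=f^{|u|-1}z'$ and distances are small enough to apply Proposition~\ref{prop:visual} to the pair $z,z'$ at each step, I conclude $\trho(z,z')e^{a(|u|-1)}=0$, whence $z=z'\in A_u$.

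The main obstacle is the last step of the inner-inclusion argument: although $f$ is a local $e^a$-similarity in $\trho$, it is not globally injective, so concluding that a point $z$ close to $x$ with $f^{|u|-1}z\in A_{u_{|u|}}$ actually lies in the specific tile $A_u$ (rather than in some other branch of $(f^{|u|-1})^{-1}(A_{u_{|u|}})$) requires the careful two-sided bookkeeping above, verifying that both $z$ and its candidate image $g_u(f^{|u|-1}z)$ sit in a region on which $f^{|u|-1}$ is injective. Once the admissible radius $r_0$ is chosen small enough ($r_0<\xi'/2$) so that every iterate $f^k$ remains within the similarity regime of Proposition~\ref{prop:visual}, this identification $z=z'$ is forced, and combining this with the diameter estimate yields the proposition with, e.g., $C_0\=\max\bigl\{e^a/r_0,\,2/(1-e^{-a})\bigr\}$.
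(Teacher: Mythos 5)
Your proposal is correct and takes essentially the same route as the paper: the inner inclusion is obtained exactly as in the paper's proof, by fixing a $\trho$-ball $B_{\trho}(\eta(A),r(A))\subseteq\inter A$ in each level-one tile and pulling it back through the inverse branch of $f^{\abs{u}-1}|_{A_u}$ via iterated use of Proposition~\ref{prop:visual}, while your outer inclusion replaces the paper's appeal to Corollary~\ref{cor:visualExpanding} and Lemma~\ref{lem:expandIter} with an equivalent direct summation of Floyd edge-lengths along a descending chain of tiles. Your explicit $z=z'$ bookkeeping addresses the branch-identification issue that the paper compresses into ``it is easy to show''; the only step you assert without proof, the backward iteration giving $\trho(x,z')<e^{-a(\abs{u}-1)}r_0$, needs the a priori bound $\trho(f^kx,f^kz')<\xi'$ before Proposition~\ref{prop:visual} can be applied at each stage (it is not a formal consequence of the forward statement), but this is patched by lifting short Floyd paths and using injectivity of $f$ on $\rho$-balls of radius $\xi$, exactly as in the converse-inequality part of the proof of Proposition~\ref{prop:visual}, and the paper's own proof elides the same point.
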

	\begin{proof}
		By the definition of Markov partitions, for each $A\in\alpha$, $\inter A\neq\emptyset$. Hence, we can choose $\eta(A)\in \inter A$ and $r(A)\in (0,+\infty)$ for each $A\in\alpha$ such that $r(A)<\xi'$ and 
		\begin{equation*}
			B_{\trho}(\eta(A), r(A))\subseteq \inter A.
		\end{equation*}
		Put $C\=1/\min\{r(A):A\in\alpha\}$. Then for each $u\in\Gamma$, since
		$(f^{\abs{u}-1})|_{A_u}\:A_u\to A_{\sigma^{\abs{u}-1} u}$
		is a homeomorphism and $A \= A_{\sigma^{\abs{u}-1} u} \in \alpha$, we can choose $\eta\=((f^{\abs{u}-1})|_{A_u})^{-1} (\eta(A))$ and $r\= e^{-a(\abs{u}-1)} r(A)$. Applying Proposition~\ref{prop:visual} inductively, it is easy to show that $B_{\trho}(\eta,r) = ((f^{\abs{u}-1})|_{A_u})^{-1} (B_{\trho}(\eta(A),r(A))) \subseteq A_u$.

		The remaining part of the proposition follows immediately from Corollary~\ref{cor:visualExpanding} and Lemma~\ref{lem:expandIter}.
	\end{proof}

	To prove the hyperbolicity of the tile graph $\Gamma$, we introduce a concept called \defn{flowers} from \cite[Section~5.6]{BM17}. Generally speaking, for each pair $u,v\in \Gamma$, we aim to compare the $\trho$-distance between $u$ and $v$ with the Gromov product $\Gprod{u,v}{o}$. We construct a flower $W(w)$ at the level of $\Gprod{u,v}{o}$ and then construct a path by connecting $u,w$ and $w,v$ that reaches the $\trho$-distance between $u$ and $v$.

	\begin{definition}
		For $u\in\Gamma$, the \emph{flower} of $u$ is defined as
		\begin{equation}\label{def:flower}
			W(u) = \{v\in\Gamma: |v|=|u|, A_u\cap A_v\neq\emptyset\}.
		\end{equation}
	\end{definition}
	So, it is the set of all the tiles intersecting with $u$ and at the same level as $u$. It follows that $A_{W(u)}$ contains a neighborhood of $A_u$, i.e.,
	\begin{equation}\label{eq:flowerContainTile}
		\inter A_{W(u)} \supseteq A_u.
	\end{equation}

	\begin{lemma}\label{lem:inFlower}
		Under the notations and the assumptions in Proposition~\ref{prop:visual}, there is a constant $r_0 > 1$ such that for each point $x\in X$, if $u\in\Gamma$ is a tile with $x\in A_u$, then
		\begin{equation*}
			B_{\trho}\bigl(x,r_0e^{-a\abs{u}}\bigr) \subseteq A_{W(u)}.
		\end{equation*}
	\end{lemma}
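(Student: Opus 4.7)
The plan is to reduce the statement at level $n$ to a finite collection of base cases at some level $k_0$ by exploiting the local similarity of $f$ supplied by Proposition~\ref{prop:visual}. Let $\xi'$ be as in Proposition~\ref{prop:visual}. Using Proposition~\ref{prop:FloydVisualMetric}, I first fix $k_0\in\Z_{>0}$ large enough that every tile at level $k_0$ has $\trho$-diameter at most $\xi'/4$. Since the collection of tiles at level $k_0$ is finite and by (\ref{eq:flowerContainTile}) each flower $A_{W(w)}$ is an open $\trho$-neighborhood of $A_w$, I can pick a single $\epsilon_0\in(0,\xi'/4)$ such that $B_{\trho}(A_w,\epsilon_0)\subseteq A_{W(w)}$ for every $w\in\Gamma$ with $\abs{w}=k_0$, and then set $r_0\=\epsilon_0 e^{ak_0}$.

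Given $u\in\Gamma$ with $\abs{u}=n\ge k_0$, $x\in A_u$, and $y\in X$ satisfying $\trho(x,y)<r_0e^{-an}$, iterating the similarity formula from Proposition~\ref{prop:visual} (at each step the intermediate distance remains below $\xi'$) gives $\trho(f^{n-k_0}x,f^{n-k_0}y)=e^{a(n-k_0)}\trho(x,y)<\epsilon_0$. Writing $u'\=\sigma^{n-k_0}u$ and $x'\=f^{n-k_0}x\in A_{u'}$, this places $f^{n-k_0}y$ in $B_{\trho}(A_{u'},\epsilon_0)\subseteq A_{W(u')}$, so $f^{n-k_0}y\in A_{v'}$ for some $v'\in W(u')$.

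To lift $v'$ back to level $n$, I invoke Corollary~\ref{cor:visualExpanding} together with compactness of $X$ to obtain a uniform $\delta_0>0$ such that the $f$-inverse branches are defined on $\trho$-balls of radius $\delta_0$; composing $n-k_0$ such branches yields an inverse branch $g$ of $f^{n-k_0}$ with $g(x')=x$, defined on $B_{\trho}(x',\delta_0)$ and contracting $\trho$-distances by $e^{-a(n-k_0)}$. After arranging $\epsilon_0+\xi'/4\le\delta_0$ (by shrinking $\epsilon_0$ if necessary), both $A_{u'}$ and $A_{v'}$ lie in the domain of $g$. Using that every tile is connected (inherited from the connectedness of elements of $\alpha$ via the Markov property, as in the standing setup of Sections~\ref{sct:MartinBoundary}--\ref{sct:quasi-invariance}), $g$ maps $A_{u'}$ homeomorphically onto $A_u$ and $A_{v'}$ homeomorphically onto a single tile $A_v$ at level $n$ with $\sigma^{n-k_0}v=v'$. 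Applying $g$ to any point of the nonempty intersection $A_{u'}\cap A_{v'}$ gives a point of $A_u\cap A_v$, so $v\in W(u)$, and $y=g(f^{n-k_0}y)\in g(A_{v'})=A_v\subseteq A_{W(u)}$. The finitely many small cases $n<k_0$ are handled by the same finiteness argument used to choose $\epsilon_0$.

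The main obstacle is securing the lifting step: one must produce a radius $\delta_0$, independent of $n$, on which inverse branches of $f^{n-k_0}$ are defined and which is large enough to accommodate both $A_{u'}$ and $A_{v'}$. This requires iterating Proposition~\ref{prop:visual} carefully together with compactness of $X$ to uniformly control the size of $f$-inverse branches. Once this is in place, the connectedness of tiles makes the identification $g(A_{v'})=A_v$ with $v\in W(u)$ transparent.
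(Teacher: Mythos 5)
Your proof follows the same basic strategy as the paper's: use Lemma~\ref{lem:expandIter} (or Proposition~\ref{prop:FloydVisualMetric}) to fix a base level at which flower diameters are below $\xi'$, extract a uniform margin $\epsilon_0$ from the finitely many flowers at that level via (\ref{eq:flowerContainTile}), and transport the general case to the base level by iterating the exact similarity of Proposition~\ref{prop:visual}. The difference is in the direction of the transfer. The paper pushes the whole neighborhood forward: it shows $f^{n}B_{\trho}\bigl(A_u,r_0e^{-a\abs{u}}\bigr)=B_{\trho}\bigl(A_{\sigma^n u},r_0e^{-a(\abs{u}-n)}\bigr)\subseteq \inter A_{W(\sigma^n u)}=f^{n}A_{W(u)}$ and then uses that $f^{n}|_{A_{W(u)}}$ is a homeomorphism onto $A_{W(\sigma^n u)}$ (guaranteed because $\diam_{\trho}A_{W(u)}<\xi'$) to pull the containment back. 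This keeps the entire argument at the level of flowers and never needs to identify individual tiles in the preimage. You instead push the single point $y$ forward, locate it in a tile $A_{v'}$ of the base-level flower, and pull $A_{v'}$ back through an explicitly constructed inverse branch $g$ of $f^{n-k_0}$.

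Two points in your lifting step need repair. First, you invoke connectedness of the tiles to conclude $g(A_{v'})=A_v$ for a single level-$n$ tile $v$ with $\sigma^{n-k_0}v=v'$. But Lemma~\ref{lem:inFlower} is stated under the hypotheses of Proposition~\ref{prop:visual}, which assume only $\mesh\alpha<\xi$; connectedness of the elements of $\alpha$ is a standing assumption of Sections~\ref{sct:MartinBoundary}--\ref{sct:quasi-invariance} but not of the appendix, and the appendix results (including Theorem~\ref{thm:hyp}, which this lemma ultimately feeds) must hold without it. If $A_{v'}$ is disconnected, $g(A_{v'})$ need not be a single tile, and the identification of $v\in W(u)$ breaks down; the paper's flower-level argument sidesteps this entirely. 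Second, the inequality $\epsilon_0+\xi'/4\le\delta_0$ cannot be arranged by shrinking $\epsilon_0$: the term $\xi'/4$ bounds the tile diameters at level $k_0$ and is independent of $\epsilon_0$ (and in fact the farthest points of $A_{v'}$ from $x'$ are only controlled by $\diam A_{u'}+\diam A_{v'}$, giving $\xi'/2$ rather than $\xi'/4$). The correct knob is $k_0$ itself: enlarge $k_0$ so that base-level tile diameters are small compared with the uniform inverse-branch radius $\delta_0$. Both defects are fixable within your framework, but the paper's forward-mapping argument avoids them and is the shorter route.
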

	\begin{proof}
		By Lemma~\ref{lem:expandIter}, there is an integer $N>0$ such that for each $u\in\Gamma$ with $\abs{u}>N$, we have $3\diam_{\trho} A_u<\xi'$. It follows from (\ref{def:flower}) that for each $u\in\Gamma$ with $\abs{u}>N$, 
		\begin{equation}\label{eq:diamFlower}
			\diam_{\trho} A_{W(u)} <\xi'.
		\end{equation}
		For each $u\in\Gamma$ with $\abs{u}\le N$, by (\ref{eq:flowerContainTile}), we can choose $r(u)>0$ such that $\inter A_{W(u)} \supseteq B_{\trho}(A_u,r(u))$.
		Choose $r_0\= \min_{u\in\Gamma, \abs{u}\le N} r(u)e^{a\abs{u}}>0$. Then for each $u\in\Gamma$ with $\abs{u}\le N$, 
		\begin{equation}\label{eq:flowerContainNbhd}
			\inter A_{W(u)} \supseteq B_{\trho}\bigl(A_u,r_0 e^{-a\abs{u}}\bigr).
		\end{equation}
		
		For a general tile $u\in\Gamma$ and a point $x\in A_u$, choose $n\=\max\{\abs{u}-N, 0\}$. By (\ref{eq:diamFlower}), $f^n|_{A_{W(u)}}$ is a homeomorphism to its image $A_{W(\sigma^n u)}$. By (\ref{eq:diamFlower}) and an inductive argument on Proposition~\ref{prop:visual},
		\begin{equation*}
			f^{n} B_{\trho} \bigl( A_u,r_0e^{-a\abs{u}}\bigr) = B_{\trho}\bigl(A_{\sigma^n u},r_0e^{-a(\abs{u}-n)}\bigr) \subseteq \inter A_{W(\sigma^n u)} = f^{n}A_{W(u)}.
		\end{equation*}
		Therefore, $A_{W(u)} \supseteq B_{\trho} \bigl( x,r_0e^{-a\abs{u}} \bigr)$.
	\end{proof}
	
	Recall that the tile graph is equipped with a combinatorial metric $d$ and the Gromov product with respect to the basepoint $o$ is given in (\ref{def:Gprod}) by
	\begin{equation}\label{eq:oGprod}
		\Gprod{u,v}{o} \=\frac{1}{2}(\abs{u} + \abs{v} - d(u,v)).
	\end{equation}
	The next lemma is the key lemma to prove the hyperbolicity of the tile graph.

	\begin{lemma}\label{lem:diamGprod}
		Under the notations and the assumptions in Proposition~\ref{prop:visual},  for all $u,v\in \Gamma$,
		\begin{equation}\label{eq:diamGprod}
			\diam_{\trho} (A_u\cup A_v) \asymp e^{-a\Gprod{u,v}{o}}.
		\end{equation}
	\end{lemma}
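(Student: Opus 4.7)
The plan is to handle the two directions separately, with the upper bound being a length computation along a geodesic in $\Gamma$ and the lower bound relying on the flower estimate of Lemma~\ref{lem:inFlower}. Write $m \= \Gprod{u,v}{o}$ throughout. For the upper bound, for any $x \in A_u$ and $y \in A_v$ I would concatenate three paths in $\Gamma$: a descending chain from $x$ down to $u$ through descendants $u, u_1, u_2, \ldots$ with $x \in A_{u_i}$ and $\abs{u_i} = \abs{u}+i$, a graph geodesic $u = w_0, w_1, \ldots, w_k = v$ of combinatorial length $k = d(u,v)$, and a symmetric descending chain from $v$ down to $y$. Since consecutive levels along any geodesic differ by at most $1$, the minimum level $\ell^*$ on the geodesic satisfies $d(u,v) \ge (\abs{u} - \ell^*) + (\abs{v} - \ell^*)$ and hence $\ell^* \ge m$, with equality for a genuine geodesic. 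The two descending chains contribute $\lesssim e^{-a\abs{u}} + e^{-a\abs{v}} \lesssim e^{-am}$, and the geodesic contributes at most $2 \sum_{j \ge m+1} e^{-aj} \lesssim e^{-am}$, so $\trho(x,y) \lesssim e^{-am}$. Combining this with the bounds $\diam_\trho A_u, \diam_\trho A_v \lesssim e^{-am}$ from Proposition~\ref{prop:FloydVisualMetric} yields $\diam_\trho(A_u \cup A_v) \lesssim e^{-am}$.

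For the lower bound, I first dispose of the boundary case $m = \min\{\abs{u},\abs{v}\}$: say $m = \abs{u}$, so that $\diam_\trho(A_u \cup A_v) \ge \diam_\trho A_u \gtrsim e^{-a\abs{u}} = e^{-am}$ by Proposition~\ref{prop:FloydVisualMetric}. So I may assume $\abs{u}, \abs{v} \ge m+1$ and set $u^* \= \tau^{\abs{u}-(m+1)} u$ and $v^* \= \tau^{\abs{v}-(m+1)} v$, the ancestors of $u$ and $v$ at level $m+1$. Pick $x_u \in A_u$ arbitrarily and $x_v$ in the interior of $A_v$; by the Markov-partition axiom that interiors at a given level are pairwise disjoint, $x_v$ lies in exactly one level-$(m+1)$ tile, namely $v^*$. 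I would then claim $x_v \notin A_{W(u^*)}$: otherwise some $w \in W(u^*)$ contains $x_v$, uniqueness forces $w = v^*$, hence $v^* \in W(u^*)$, i.e.\ $d(u^*, v^*) \le 1$; combined with the descending chains $u \to u^*$ and $v \to v^*$ this yields $d(u,v) \le \abs{u} + \abs{v} - 2(m+1) + 1$, so $\Gprod{u,v}{o} \ge m + 1/2$, contradicting $\Gprod{u,v}{o} = m$. Since $x_u \in A_u \subseteq A_{u^*}$, Lemma~\ref{lem:inFlower} gives $B_\trho(x_u, r_0 e^{-a(m+1)}) \subseteq A_{W(u^*)}$, whence $\trho(x_u, x_v) \ge r_0 e^{-a(m+1)} \gtrsim e^{-am}$.

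The hard part will be the lower bound, specifically ensuring that the ``deep interior'' point $x_v$ is excluded from the flower $A_{W(u^*)}$ rather than being captured by some sibling of $v^*$ that happens to intersect $A_{u^*}$. The resolution is to choose the ancestor $u^*$ at level $m+1$ instead of $m$: this gives just enough slack that the alternative $v^* \in W(u^*)$ forces $\Gprod{u,v}{o} \ge m+1/2 > m$, a contradiction. The small exceptional case $m = \min\{\abs{u}, \abs{v}\}$, where no level-$(m+1)$ ancestor exists for one endpoint, falls out directly from the quasi-roundness estimate of Proposition~\ref{prop:FloydVisualMetric}.
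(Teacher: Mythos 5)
Your argument is correct in substance and rests on the same two pillars as the paper's proof: a geometric-series estimate along a $d$-geodesic for the upper bound, and the flower estimate of Lemma~\ref{lem:inFlower} for the lower bound. The upper bound is essentially the paper's. For the lower bound the paper argues ``forward'': for arbitrary $x\in A_u$, $y\in A_v$ it sets $n$ to be roughly $-a^{-1}\log(\trho(x,y)/r_0)$ (capped at $\min\{\abs{u},\abs{v}\}$), uses Lemma~\ref{lem:inFlower} to produce \emph{adjacent} level-$n$ tiles containing $x$ and $y$ respectively, and deduces $\Gprod{u,v}{o}\ge n-1/2$; you instead fix the level in terms of $m=\Gprod{u,v}{o}$ and exhibit a single pair of points at distance $\gtrsim e^{-am}$ by showing that an interior point of $A_v$ escapes the flower of the corresponding ancestor of $u$. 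Both routes work; the paper's has the small advantage of never needing to know \emph{which} tile of a given level contains $y$, only that some tile in the flower does.

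Three small repairs are needed in your version. First, $\Gprod{u,v}{o}$ is in general a half-integer, so ``the ancestor at level $m+1$'' should be the ancestor at level $\lceil m\rceil+1$; the contradiction then gives $\Gprod{u,v}{o}\ge\lceil m\rceil+1/2>m$ in all cases, and the resulting lower bound only degrades to $r_0e^{-a(m+2)}$, which is still $\gtrsim e^{-am}$ (the boundary case should likewise be enlarged to $\lceil m\rceil+1>\min\{\abs{u},\abs{v}\}$). Second, the claim that $d(u,v)=(\abs{u}-\ell^*)+(\abs{v}-\ell^*)$ for a geodesic, and hence that each level contributes at most two edges, is not justified: a geodesic need not descend and then ascend monotonically. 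This is harmless, because the bound $\abs{\gamma(i)}\ge\max\{\abs{u}-i,\abs{v}-(d(u,v)-i)\}$, valid for any geodesic, splits the sum into two geometric series with the same total $\lesssim e^{-a\Gprod{u,v}{o}}$. Third, the assertion that $x_v\in\inter A_v$ lies in no level-$(\lceil m\rceil+1)$ tile other than $v^*$ uses that distinct tiles of the same level have disjoint interiors and that each tile is the closure of its interior; the paper states these only for level-one tiles, so you should add a line deducing them for all levels from the openness of $f$. With these patches the proof is complete.
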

	\begin{proof}
		For one side of the inequality, we fix $u,v\in\Gamma$ and assume that $\gamma\:\{0,\dots, n\}\to \Gamma$ is a $d$-geodesic connecting $u$ and $v$. Then $n=d(u,v)$. Choose $k\=\lfloor(n+\abs{u}-\abs{v})/2\rfloor$. Then by (\ref{eq:oGprod}),
		\begin{equation*}
			e^{-a(\abs{u}-k)} \asymp e^{-a\Gprod{u,v}{o}} \asymp e^{-a(\abs{v}-(n-k))}.
		\end{equation*}
		
		By the definition (\ref{def:FloydMetric}) of $\trho$, since $\abs{\gamma(i)}\ge \max\{\abs{u}-i, \abs{v}-(n-i)\}$,
		\begin{align*}
			\trho(u,v) 
			&\le l(\gamma) 
			= \sum_{i\in \{0,\dots, n-1\}} e^{-a\max\{\abs{\gamma(i), \abs(\gamma(i+1))}\}}\\
			&\le \sum_{i=0}^k e^{-a(\abs{u}-i)} + \sum_{i=0}^{n-k-1} e^{-a(\abs{v}-i)}
			\lesssim e^{-a(\abs{u}-k)}+e^{-a(\abs{v}-(n-k-1))}
			\lesssim e^{-a\Gprod{u,v}{o}}.
		\end{align*}
		It is easy to show that for all $\eta\in A_u$,
		$\trho(u,\eta) \le \sum_{i=\abs{u}}^{+\infty} e^{-ai}\lesssim e^{-a\abs{u}}\le e^{-a\Gprod{u,v}{o}}$. 
		It follows that
		\begin{equation}\label{eq:diamLessProd}
			\diam_{\trho} (A_u\cup A_v) \le \trho(u,v)+2\sup_{\eta\in A_u}\trho(u,\eta)+2\sup_{\eta\in A_v}\trho(v,\eta) \lesssim e^{-a\Gprod{u,v}{o}}.
		\end{equation}

		For the other side of the inequality, we choose the constant $r_0>1$ in Lemma~\ref{lem:inFlower}. Fix a pair of vertices $u,v\in\Gamma$ and arbitrarily choose $x\in A_u$ and $y\in A_v$. We choose
		\begin{equation*}
			n\=\min\bigl\{\lfloor-a^{-1} \log(\trho(x,y)/r_0)\rfloor, \abs{u}, \abs{v}\bigr\}.
		\end{equation*}
		Then we have 
		\begin{equation}\label{eq:defLogDist}
			r_0e^{-an}>\trho(x,y).
		\end{equation}
		
		It follows from Lemma~\ref{lem:inFlower} that for each vertex $w\in\Gamma$ with $\abs{w}=n$ and $x\in A_w$, there is a vertex $w'\in\Gamma$ with $\abs{w'}=n$ and $y\in A_{w'}$ such that $A_w\cap A_{w'}\neq\emptyset$. Recall the definition of the edges of the tile graph $\Gamma$. It follows from $x\in A_w\cap A_u$, $y\in A_{w'}\cap A_v$, and $n\le \min\{\abs{u}, \abs{v}\}$ that $d(u,w)=\abs{u}-n$ and $d(v,w')=\abs{v}-n$. Hence, 
		\begin{equation*}
			\Gprod{u,v}{o} = \frac{1}{2}(\abs{u}+\abs{v}-d(u,v)) = \frac{1}{2}(2n+d(u,w)+d(w',v)-d(u,v))\ge \frac{2n-1}{2}.
		\end{equation*}
		Combining the inequality above with (\ref{eq:defLogDist}), we can prove that $\trho(x,y)\lesssim e^{-a\Gprod{u,v}{o}}$. Therefore, the lemma follows.
	\end{proof}

	Now we can give a proof of Proposition~\ref{prop:hyp}.

	\begin{proof}[Proof of Proposition~\ref{prop:hyp}]
		By Lemma~\ref{lem:diamGprod}, for all $u,v,w\in\Gamma$, we have 
		\begin{align*}
			e^{-a\Gprod{u,v}{o}} 
			\asymp \diam_{\trho} (A_u \cup A_v)
			&\le \diam_{\trho} (A_u \cup A_w) + \diam_{\trho} (A_w \cup A_v)\\
			&\asymp e^{-a\Gprod{u,w}{o}}+ e^{-a\Gprod{w,v}{o}}
			\asymp \max\{e^{-a\Gprod{u,w}{o}}+ e^{-a\Gprod{w,v}{o}}\}.
		\end{align*}
		Hence, there is a constant $\delta>0$ such that 
		\begin{equation*}
			\Gprod{u,v}{o} \ge \min\bigl\{\Gprod{u,w}{o}, \Gprod{w,v}{o}\bigr\} - \delta.
		\end{equation*}
		That is, $\Gamma$ is Gromov hyperbolic.

		Since $X$ is Gromov hyperbolic, then, for each $a>0$ small enough, $\partial_{\trho}\Gamma$ coincides with the Gromov boundary of $\Gamma$ and $\trho|_{\partial_{\trho}\Gamma}$ is a visual metric (see for example, \cite[Chapter~4]{BHK01}). Therefore, by the definition of visual metrics, $\trho|_{\partial_{\trho}\Gamma}\asymp\rho$, and, by Proposition~\ref{prop:boundaryCoincide}, the Gromov boundary of $\Gamma$ is naturally homeomorphic to $X$.		
	\end{proof}

	\begin{cor} \label{cor:visualMetric}
		Let $(X,f)$, $\alpha$, $\Gamma$, and $a_0$ be from Proposition~\ref{prop:hyp}. Let $\rho$ be an $a$-visual metric on $X$ for some $0<a<a_0$. Then there is some constant $C_0 > 1$ such that, for all $u,v\in\Gamma$, there is a point $x\in A_u$ such that
		\begin{align*}
			B_{\rho} \bigl(x,C_0^{-1} e^{-a\abs{u}}\bigr) \subseteq A_u \subseteq B_{\rho} &\bigl(x,C_0 e^{-a\abs{u}}\bigr), \\
			C_0^{-1} e^{-a\Gprod{u,v}{o}} \le \diam_{\rho} (A_u\cup A_v) &\le C_0 e^{-a\Gprod{u,v}{o}}. 
		\end{align*}
	\end{cor}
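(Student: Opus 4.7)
The plan is to deduce the corollary directly from the corresponding statements already proved for the Floyd metric $\trho$ (defined in (\ref{def:FloydMetric})) combined with the bi-Lipschitz comparison between $\rho$ and $\trho$ furnished by Proposition~\ref{prop:hyp}. Concretely, Proposition~\ref{prop:hyp} asserts that for every $a \in (0,a_0)$, the Floyd metric $\trho$ is itself an $a$-visual metric on $X$, and that if $\rho$ is also an $a$-visual metric then $\rho \asymp \trho$. This last fact is the engine of the whole reduction: it follows from the defining relation $\rho(\eta,\zeta)\asymp e^{-a\Gprod{\eta,\zeta}{o}}\asymp \trho(\eta,\zeta)$ for $a$-visual metrics, so there is a constant $K \ge 1$ with $K^{-1}\trho \le \rho \le K\trho$ on $X$.

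With this comparison in hand, the first inclusion in the corollary is obtained directly from Proposition~\ref{prop:FloydVisualMetric}: for each $u\in\Gamma$ there is $x\in A_u$ and a constant $C>1$ (independent of $u$) such that $B_{\trho}(x, C^{-1} e^{-a|u|}) \subseteq A_u \subseteq B_{\trho}(x, C e^{-a|u|})$. Applying $K^{-1}\trho\le\rho\le K\trho$ gives $B_\rho(x,K^{-1}C^{-1}e^{-a|u|})\subseteq B_{\trho}(x,C^{-1}e^{-a|u|}) \subseteq A_u \subseteq B_{\trho}(x,Ce^{-a|u|})\subseteq B_\rho(x,KCe^{-a|u|})$, so setting $C_0 \= KC$ yields the ball inclusions. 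For the second estimate, the comparison $\rho \asymp \trho$ immediately gives $\diam_\rho(A_u\cup A_v)\asymp\diam_{\trho}(A_u\cup A_v)$, and Lemma~\ref{lem:diamGprod} states the latter quantity is $\asymp e^{-a\Gprod{u,v}{o}}$; enlarging $C_0$ if necessary to absorb both multiplicative constants finishes both inequalities simultaneously.

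There is no substantial obstacle here, since all the geometric content sits in Proposition~\ref{prop:FloydVisualMetric} (for the ball inclusions) and Lemma~\ref{lem:diamGprod} (for the Gromov product estimate). The only point to verify is that $\rho \asymp \trho$ globally on $X$ with a uniform constant, which is precisely the standard uniqueness-up-to-bi-Lipschitz property of $a$-visual metrics on a Gromov boundary and is recorded as the final clause of Proposition~\ref{prop:hyp}. Thus the corollary is essentially a transport of the $\trho$-statements to $\rho$ through a single bi-Lipschitz constant.
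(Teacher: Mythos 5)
Your proposal is correct and follows essentially the same route as the paper: the paper's proof likewise invokes Proposition~\ref{prop:hyp} to get $\rho\asymp\trho$ on $X=\partial_{\trho}\Gamma$ and then transports Proposition~\ref{prop:FloydVisualMetric} (ball inclusions) and Lemma~\ref{lem:diamGprod} (Gromov product estimate) to $\rho$, absorbing the bi-Lipschitz constant into $C_0$. Your write-up simply makes the constant bookkeeping explicit.
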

	\begin{proof}
		By Proposition~\ref{prop:hyp}, $\partial_{\trho}\Gamma$ coincides with the Gromov boundary of $\Gamma$ and $\trho|_{\partial_{\trho}\Gamma} \asymp \rho$. Hence, Proposition~\ref{prop:FloydVisualMetric} and Lemma~\ref{lem:diamGprod} can be applied to $\rho$, and the corollary follows immediately.
	\end{proof}

	The following result for visual metrics is a stronger version of Assumption~(iii) in Subsection~\ref{subsect:expandingDyn}. Since we do not care about the exact value of $\xi>0$, we use the same notation $\xi$ as in Subsection~\ref{subsect:expandingDyn}.
	\begin{cor} \label{cor:visual}
		Let $(X,f)$, $\alpha$, $\Gamma$, and $a_0$ be from Proposition~\ref{prop:hyp}. Let $\rho$ be an $a$-visual metric on $X$ for some $0<a<a_0$. Then there is a constant $\xi>0$ such that for all $x,y\in X$ and $n\in\Z_{>0}$ satisfying that $\rho\bigl(f^k x,f^k y \bigr)<\xi$ for each integer $0\le k<n$, we have
		\begin{equation}\label{eq:uniformSimilar}
			\rho(f^n x, f^n y) \asymp e^{an}\rho(x,y).
		\end{equation}
		Moreover, for each $x\in X$, $f|_{B(x,\xi)}$ is a homeomorphism to its image.
	\end{cor}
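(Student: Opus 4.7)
The plan is to bootstrap from Proposition~\ref{prop:visual}, which already establishes the exact local similarity $\trho(fx,fy)=e^a\trho(x,y)$ for $\trho(x,y)<\xi'$, and then transfer this statement to the given $a$-visual metric $\rho$ using the fact that any two $a$-visual metrics on the Gromov boundary are bi-Lipschitz equivalent. Concretely, Proposition~\ref{prop:hyp} identifies $\partial_{\trho}\Gamma$ with the Gromov boundary of $\Gamma$ (homeomorphic to $X$) and tells us that $\trho$ is itself an $a$-visual metric. A standard fact for visual metrics on Gromov boundaries (see e.g.\ \cite[Chapter~4]{BHK01}) then yields a constant $C>1$ with $C^{-1}\trho(x,y)\le \rho(x,y)\le C\trho(x,y)$ for all $x,y\in X$.

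With this comparison in hand, the first step is to choose $\xi>0$ small enough that $C\xi<\xi'$, where $\xi'$ is the constant from Proposition~\ref{prop:visual}. Then, for any $x,y\in X$ and $n\in\Z_{>0}$ with $\rho(f^k x,f^k y)<\xi$ for each $0\le k<n$, one has $\trho(f^k x,f^k y)\le C\rho(f^k x,f^k y)<\xi'$, so Proposition~\ref{prop:visual} applies at every intermediate step. A straightforward induction on $k$ gives
\begin{equation*}
\trho(f^n x, f^n y)=e^{an}\trho(x,y),
\end{equation*}
and applying the bi-Lipschitz equivalence $\rho\asymp\trho$ once on each side yields
\begin{equation*}
C^{-2}e^{an}\rho(x,y)\le \rho(f^n x, f^n y)\le C^{2}e^{an}\rho(x,y),
\end{equation*}
which is the desired $\rho(f^n x, f^n y)\asymp e^{an}\rho(x,y)$.

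For the local homeomorphism statement, the topology induced by $\rho$ agrees with the original topology on $X$ by Proposition~\ref{prop:hyp}, so $f\colon X\to X$ remains a local homeomorphism in the sense of Assumption~(ii) in Subsection~\ref{subsect:expandingDyn}. By further shrinking $\xi$ if necessary (using compactness of $X$ together with the local homeomorphism property), we can guarantee that $f|_{B_\rho(x,\xi)}$ is a homeomorphism onto its image for every $x\in X$; injectivity alone already follows from the expansion estimate just established, since if $fy_1=fy_2$ with $y_1,y_2\in B_\rho(x,\xi)$, then $\rho(y_1,y_2)<\xi$ so the expansion estimate forces $0=\rho(fy_1,fy_2)\gtrsim e^a\rho(y_1,y_2)$, hence $y_1=y_2$.

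The main obstacle to be careful about is the logical dependency: Proposition~\ref{prop:visual} was stated for $\trho$, but the hypothesis and conclusion of the corollary are phrased for a general $a$-visual metric $\rho$ whose relationship to $\trho$ is only via the ``visual metrics are unique up to multiplicative constants'' principle. Once this comparison is invoked, everything reduces to an inductive application of Proposition~\ref{prop:visual} together with bookkeeping of the multiplicative constants; no further genuinely new estimate is required.
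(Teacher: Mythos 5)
Your proof is correct and follows essentially the same route as the paper's: both invoke the comparison $\rho\asymp\trho$ supplied by Proposition~\ref{prop:hyp}, rescale $\xi'$ by the comparison constant, and apply Proposition~\ref{prop:visual} inductively before transferring the estimate back to $\rho$. The only quibble (shared with the paper's own write-up) is that two points of $B_\rho(x,\xi)$ may be at distance up to $2\xi$ rather than $\xi$, so $\xi$ should be halved once more before running the injectivity argument.
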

	\begin{proof}
		By Proposition~\ref{prop:hyp}, $\rho\asymp\trho$. Choose the constant $C=C(\asymp)$ and put $\xi\=\xi'/C$, where $\xi'$ is from Proposition~\ref{prop:visual}. Then (\ref{eq:uniformSimilar}) follows from Proposition~\ref{prop:visual} by an inductive argument. Moreover, (\ref{eq:uniformSimilar}) implies that for all $x,y\in X$ with $\rho(x,y)<\xi$, 
		\begin{equation*}
			\rho(f x, f y) \asymp \rho(x,y).
		\end{equation*}
		Therefore, $f|_{B(x,\xi)}$ is a Lipschitz homeomorphism to its image.
	\end{proof}

	\printbibliography

\end{document}